\setlist[enumerate]{font=\normalfont}
\newtheorem{theorem}{Theorem}
\newtheorem{lemma}[theorem]{Lemma}
\newtheorem{proposition}[theorem]{Proposition}
\newtheorem{conjecture}[theorem]{Conjecture}
\newtheorem{corollary}[theorem]{Corollary}
\theoremstyle{definition}
\newtheorem{definition}[theorem]{Definition}
\newtheorem{question}[theorem]{Question}
\theoremstyle{remark}
\newcommand{\Z}{\mathbb{Z}}
\newcommand{\Q}{\mathbb{Q}}
\newcommand{\N}{\mathbb{N}}
\newcommand{\A}{\mathcal{A}}
\title{The Conjugacy Growth of the Soluble Baumslag-Solitar Groups}
\author{Laura Ciobanu} 
\address{School of Mathematical and Computer Sciences,
 Heriot-Watt University, 
 Edinburgh EH14 4AS,
 Scotland}
\email{l.ciobanu@hw.ac.uk}
\author{Alex Evetts}
\address{School of Mathematical and Computer Sciences,
 Heriot-Watt University, 
 Edinburgh EH14 4AS,
 Scotland}
\email{ace2@hw.ac.uk}
\author{Meng-Che ``Turbo'' Ho}
\address{Department of Mathematics,
Purdue University,
West Lafayette, IN 47907-2067,
USA
}
\email{ho140@purdue.edu}
\date{\today}
\begin{document}

\maketitle
\begin{abstract}
 In this paper we give asymptotics for the conjugacy growth of the soluble Baumslag-Solitar groups $BS(1,k)$, $k\geq 2$, with respect to the standard generating set, by providing a complete description of geodesic conjugacy representatives. We show that the conjugacy growth series for these groups are transcendental, and give formulas for the series. As a result of our computation we also establish that in each $BS(1,k)$ the conjugacy and standard growth rates are equal. \\
 
 \noindent 2010 Mathematics Subject Classification: 20F65, 20E45, 05E15.\\
 
 \noindent Key words: Conjugacy growth, soluble groups, generating functions.
 \end{abstract}

\section{Introduction}
For any $n\geq 0$, the \textit{conjugacy growth function} $c_{G,S}(n)$ of a 
finitely generated group $G$, with respect to some finite generating set $S$, counts the number of conjugacy classes intersecting the ball of radius $n$ in the Cayley graph of $G$ with respect to $S$. The \emph{conjugacy growth series} of $G$ with respect to $S$ is then the generating function for the sequence $c_{G,S}(n)$. There are numerous results in the literature about the asymptotics of conjugacy growth \cite{ckGAFA, ckIJAC, gubasapir, hullosin}, as well as about the behaviour of conjugacy growth series \cite{AC2017, CH2014, CHHR, Evetts, Mercier, rivin}, for important classes of groups. Of particular relevance here is the work \cite{bdc} of Breuillard and Cornulier, who showed that the function $c_{G,S}(n)$ grows exponentially for finitely generated soluble groups that are not virtually nilpotent, such as the soluble Baumslag-Solitar groups $BS(1,k) = \langle a,t \mid tat^{-1} =  a^k \rangle$, $k\geq 2$.

 In this paper we give finer asymptotics for $c_{BS(1,k), \{a, t\}}(n)$, compute explicitly the conjugacy growth series of $BS(1,k)$ with respect to the standard generating set $\{a,t\}$, and show that this series is transcendental. We establish the transcendental behaviour from the fact that $c_{G,S}(n)$ is asymptotically of the form $\frac{\alpha^n}{n}$ for a constant $\alpha>1$, which is interestingly similar to hyperbolic groups \cite{AC2017} and several classes of acylindrically hyperbolic groups \cite{GY2019}, despite $BS(1,k)$ being among the first examples of groups that are not acylindrically hyperbolic.
 
 This paper provides further confirmation for the conjecture (see \cite{Evetts}) that the only groups with rational conjugacy growth series are the virtually abelian ones. It also provides further confirmation for the conjecture that the conjugacy and standard growth rates in finitely presented groups are equal; this was already observed for hyperbolic \cite{AC2017}, relatively hyperbolic \cite{GY2019}, most graph products \cite{CHM18} and lamplighter groups \cite{Mercier}.
 
 The structure of the paper is as follows. We give the background on conjugacy growth functions and series in Section \ref{sec:prelim}, where we also provide descriptions of normal forms in the Baumslag-Solitar groups that will be used to describe the conjugacy representatives later in the paper. In Section \ref{sec:abelian} we completely describe the conjugacy representatives and give the conjugacy growth series for those elements in the maximal abelian normal subgroup of $BS(1,k)$, and then in Section \ref{sec:general} we describe geodesic conjugacy representatives for the remaining conjugacy classes of $BS(1,k)$. 
 
 The main result appears in Section \ref{sec:growthseries}, where we show (Corollary \ref{cor:mainresult}) that the conjugacy growth series of $BS(1,k)$, with respect to the standard generating set, is transcendental. In Section \ref{sec:growthseries} we also show that the conjugacy and standard growth rates are equal, in Corollary \ref{cor:equalrates}. Finally, in Section \ref{sec:formulas} we give the formulas for the conjugacy growth series of $BS(1,k)$.

% We compute explicitly the conjugacy growth series of $BS(1,k)$ with respect to the natural generating set $\{a,t\}$, that is, the formal power series 
%\begin{equation}\label{eq:growthseries}
%\mathcal{C}_{BS(1,k)} (z) = \sum_{n=0}^{\infty} c(n)z^n,
%\end{equation}
%where for ease of notation $c(n):=c_{BS(1,k), \{a, t\}}(n)$.
%\opinion{Laura will add all background in Intro.}
\section{Preliminaries} \label{sec:prelim}

\subsection{Conjugacy growth and series} 

Throughout this subsection, fix some group $G$ and a finite generating set $S$ of $G$. The \emph{(word) length} of an element $g\in G$, denoted by $|g|$, is the length of a shortest word in $S$ that represents $g$, i.e.\ $|g| = \min \{ |w| \mid w\in S^*, w =_G g\}$. In this case, we say $w$ is a geodesic word, or simply a geodesic.

We will often write $g \sim h$ to denote that $g$ and $h$ are conjugate, and write $[g]$ for the conjugacy class of $g$. The \emph{length} of $[g]$, denoted by $|[g]|$, is the shortest length among all elements in $[g]$, i.e.\ $|[g]| = \min \{ |h| \mid h \sim g \}$. We say that a word $w$ is a \emph{conjugacy geodesic} for $[g]$ if it a geodesic, and if it moreover represents an element of shortest length in $[g]$.

We define the \emph{cumulative conjugacy growth function} of $G$ with respect to $S$ to be the number of conjugacy classes whose length is at most $n$, and the \emph{strict conjugacy growth function}, denoted as $c(n) = c_{G,S}(n)$, to be the number of conjugacy classes whose length is equal to $n$, i.e.\ 
\[ c(n) = \#\{[g] \mid |[g]| = n\}.\]
For ease of computation we shall work only with the strict version, and call that the \emph{conjugacy growth function}. 
The \emph{conjugacy growth series} $C(z) = C_{G,S}(z)$ is defined to be the (ordinary) generating function of $c(n)$, so
\[ C(z) = \sum\limits_{n = 0}^{\infty} c(n) z^n. \]
All results in this paper can be easily extended to the cumulative version of the conjugacy growth function and series (see \cite{AC2017}).

We call a formal power series $f(z)$ \emph{rational} if it can be expressed (formally) as the ratio of two polynomials with integral coefficients, or equivalently, the coefficients of $f(z)$ satisfy a finite linear recursion. In the language of polynomial rings, this is to say $f(z) \in \Q(z)$. Furthermore, $f(z)$ is \emph{irrational} if it is not rational. 

A formal power series is \emph{algebraic} if it is in the algebraic closure of $\Q(z)$, i.e.\ it is the solution to an polynomial equation with coefficients from $\Q(z)$. It is called \emph{transcendental} if it is not algebraic.

\subsection{Baumslag-Solitar groups.} Throughout the rest of the article, we will write $$G = BS(1,k) = \langle a,t \mid tat^{-1} =  a^k \rangle$$ where $k \geq 2$ is a natural number, and will write the conjugation as $a^t = tat^{-1}$. Let $\Z_k = \{x \in \Q \mid k^ex \in \Z \text{ for some }e\in \Z\}$ and consider the semidirect product $\Z_k\rtimes \Z$, where the action of $\Z$ on $\Z_k$ is multiplication by $k$. Then $BS(1,k) \cong \Z_k\rtimes \Z$, with the isomorphism given by $a \to (1,0) \in \Z_k$ and $t\to (0,1) \in \Z$ where we write an element of $G$ in the semidirect normal form $(x,m)$. 

Suppose that $m > 0$. Since 
\begin{equation}
(t^m)^a = at^ma^{-1} = a\cdot a^{-k^m}t^m = (1-k^m,m)
\end{equation}
and $a^t = a^k$, we get that conjugation by generators amounts to:
\begin{equation} \label{basicconjugation}
(x,m)^a = (x+(1-k^m),m)
\  \textrm{and} \ (x,m)^t = (kx,m).
 \end{equation} 

The form of geodesics in the soluble Baumslag-Solitar groups has been studied in several articles, and we summarise here the results in a form convenient for further use. The following propositions are derived from section $4$ of \cite{CEG}. We restrict for now to only those elements with zero $t$-exponent sum. 

%See also [cite Turbo and Ayla's preprint `in preparation'?]. \opinion{decide on adding this reference or not.}
\begin{proposition}\label{prop:basegrpelemreps}
	 Let $k=2r+1$ for some positive integer $r$. The set $\mathcal{E}_o$ of words in the following forms comprises a set of unique geodesic representatives for the elements of the subgroup $\Z_k$.
	\begin{enumerate}[label=\normalfont O\alph*.]
		\item\label{odd1} $\{\epsilon,a^{\pm1},\ldots a^{\pm(r+1)}\} $
		\item\label{odd2} $\{a^{x_0}ta^{x_1}\cdots ta^{x_d}t^{-d}\mid d\geq1, x_d\neq0, A\}$
		\item\label{odd3} $\{t^{-b}a^{x_0}ta^{x_1}\cdots ta^{x_d}t^{-c}\mid b,c,d\geq1, b=c+d, x_0\neq0, x_d\neq0, A \}$
		\item\label{odd4} $\{t^{-d}a^{x_0}ta^{x_1}\cdots ta^{x_d} \mid d\geq1, x_0\neq0, A  \}$
	\end{enumerate}
Here $A$ signifies the conditions $|x_d|\leq r+1$, $|x_i|\leq r$ for $i<d$, and if $x_{d-1}=\pm r$ then $x_d\neq\mp1$.
\end{proposition}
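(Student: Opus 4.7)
The plan is to exhibit an explicit bijection between elements of the normal subgroup $\Z_k$ and words in $\mathcal{E}_o$, verify via a semidirect-product computation that each word evaluates to the corresponding element, and then invoke the geodesic analysis of Section~4 of \cite{CEG} to argue that every such word is a geodesic.

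For the bijection, I would use that since $k = 2r+1$ is odd, every integer $p$ admits a unique \emph{balanced} base-$k$ expansion $p = \sum_{i=0}^{\ell} y_i k^i$ with digits $y_i \in \{-r,\ldots,r\}$; moreover, some integers admit a second, one-letter-shorter expansion with leading digit $\pm(r+1)$, and the condition $A$ encodes exactly when that alternative gives the shorter word. Each element of $\Z_k$ can be written uniquely as $(p/k^{d'}, 0)$ with $d'\geq 0$ chosen minimally (so $k\nmid p$ when $d' \geq 1$). The four families O a--O d correspond to: $d'=0$ with $|p|\leq r+1$ (the trivial form $a^p$); $d'=0$ with $|p|>r+1$ (which ``stores'' the high digits of $p$ between $t$'s and then retracts with a final $t^{-d}$); $d'\geq 1$ with the balanced expansion of $p$ straddling position $d'$ (the mixed form, requiring $t^{-1}$'s both before and after the $a$-block); and $d'\geq 1$ with the expansion lying entirely at positions below $d'$ (so only one batch of initial $t^{-1}$'s is needed). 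A direct calculation using $(x,m)(y,n) = (x+k^m y, m+n)$ in $\Z_k\rtimes\Z$ then verifies that each proposed word represents the intended element, and uniqueness is immediate from uniqueness of the balanced expansion together with the disjointness of the four families via their $t$-signatures.

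The main obstacle is geodesicity. The length of each candidate word is easy to read off from its shape (e.g., $\sum_i |x_i| + 2d$ for types O b and O d), so one must show no shorter word in $\{a^{\pm1}, t^{\pm1}\}^*$ represents the same element. I would appeal to \cite{CEG}, whose geodesic description for $BS(1,k)$ shows that any geodesic has alternating blocks of $a$'s and $t^{\pm 1}$'s whose sizes are pinned down up to the choices made explicit in condition $A$. The inner bound $|x_i|\leq r$ is forced by the observation that replacing $x_i$ with $x_i - \mathrm{sgn}(x_i) k$ and adjusting $x_{i+1}$ by $\mathrm{sgn}(x_i)$ saves letters whenever $|x_i|\geq r+1$; at the extremal positions the bound relaxes to $r+1$ because the compensating adjustment would then introduce a new $a$-block rather than modifying an existing one. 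The last clause of $A$ rules out the single remaining length-saving swap: if $(x_{d-1},x_d)=(\pm r, \mp 1)$, one could rewrite this pair as $(\mp(r+1),0)$ and drop the trailing $a^{x_d}$, saving a letter. Combining the bijection, disjointness, and geodesicity then proves the proposition.
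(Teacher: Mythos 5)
The paper itself gives no proof of this proposition: it is imported wholesale from Section~4 of \cite{CEG} (``The following propositions are derived from section 4 of \cite{CEG}''), so there is no in-paper argument to compare yours against. Your reconstruction via balanced base-$k$ digits is the right way to see where the four forms come from, and your implicit reading of the $t$-balance condition in form Oc as $d=b+c$ is the correct one: as literally stated, ``$b=c+d$'' gives $t$-exponent sum $-b+d-c=-2c\neq 0$, so the word would not lie in $\Z_k$ at all; this must be a typo (compare form 2c of Proposition~\ref{prop:2elemreps}, which reads $d=b+c$).

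Two steps of your plan are asserted more quickly than they can be justified. First, uniqueness is not ``immediate from \dots\ the disjointness of the four families via their $t$-signatures'': words of distinct shapes can perfectly well evaluate to the same element of $\Z_k$, and the last clause of condition $A$ exists precisely to destroy such collisions. For instance $a^{-r}tat^{-1}$ (shape Ob, value $-r+k=r+1$) collides with $a^{r+1}$ (shape Oa), and a word of shape Od ending in $ta^{\pm(r+1)}$ collides with the word of shape Oc whose top two digits are $(\mp r,\pm1)$; in each colliding pair one must check that exactly one word survives $A$ and that the survivor is the shorter. You have the ingredients for this (it is the same $(\pm r,\mp1)\leftrightarrow\mp(r+1)$ trade you invoke for geodesicity), but the check has to be carried out family against family rather than waved through. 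Second, your local rewriting arguments only compare candidate words of the prescribed shapes against one another; excluding shorter representatives of \emph{arbitrary} shape (words with pinches $ta^{i}t^{-1}$, words whose $t$-track rises before it falls, and so on) is the genuinely hard content, and you defer it entirely to \cite{CEG}. That deferral is defensible --- the paper does exactly the same --- but it means your write-up is a derivation of these normal forms from the geodesic classification of \cite{CEG} rather than a self-contained proof.
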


%Now consider the even case when $k=2r$, $r\geq 2$.

\begin{proposition}\label{prop:evenelemreps}
Let $k=2r$ for some $r\geq 2$. The set $\mathcal{E}_e$ of words in the following forms comprise a set of unique geodesic representatives for the elements in $\Z_k$.
\begin{enumerate}[label=\normalfont E\alph*.]
\item\label{even1} $\{\epsilon,a^{\pm1},\ldots,a^{\pm(r+1)}\}$ 
\item\label{even2} $\{a^{x_0}ta^{x_1}\cdots ta^{x_d}t^{-d}\mid  d\geq1, x_d \neq 0, A, B \}$
\item\label{even3} $\{t^{-b}a^{x_0}ta^{x_1}\cdots ta^{x_d}t^{-c}\mid b,c,d\geq1, b=c+d, x_0 \neq 0, x_d \neq 0, A, B \}$
\item\label{even4} $\{t^{-d}a^{x_0}ta^{x_1}\cdots ta^{x_d} \mid d\geq1, x_0 \neq 0, A \}$
\end{enumerate}
Here, $A$ signifies the conditions $|x_d|\leq r+1$, and for each $0\leq i<d$, $|x_i|\leq r$, if $x_{i-1}=r$ then $0\le x_i<r$ for $i<d$, and if $x_{i-1}=-r$ then $-r<x_i\le 0$. And $B$ signifies that the following subwords are forbidden: $a^{\pm r}ta^{\pm(r-2)}ta^{\mp1}t^{-1}$, $a^{\pm(r-1)}ta^{\mp1}t^{-1}$.
\end{proposition}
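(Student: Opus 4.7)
The plan is to interpret words in $\Z_k \subset BS(1,k)$ via the semidirect product structure $\Z_k \rtimes \Z$ as signed base-$k$ expansions. An element $x \in \Z_k$ corresponds to a finite sum $x = \sum_{i=s}^{e} x_i k^i$, and a word of the form Eb, Ec, or Ed records the digit sequence $(x_s, \ldots, x_e)$, with $s$ and $e$ encoded by the numbers of negative $t$-powers on the left and right. The four subfamilies of $\mathcal{E}_e$ thus correspond to the four geometric cases: $x$ small, $s=0$, $e=0$, or $s<0<e$. Reformulated this way, the proposition is a statement about the existence, uniqueness, and minimality of a particular balanced-digit expansion of $x$.

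First, I would establish surjectivity. Given $x \in \Z_k$, a greedy algorithm produces digits in the balanced set $\{-r, -r+1, \ldots, r-1, r\}$, with the leading digit allowed in $\{-(r+1), \ldots, r+1\}$; the tie-breaking rules in condition A (if $x_{i-1}=r$ then $0 \leq x_i < r$, and similarly for $-r$) tell the algorithm which of the two legal digits to prefer when both $r$ and $-r$ (differing by $k$) are candidates for the next position. This yields exactly one representation of the prescribed shape for each $x$, giving existence and a candidate for uniqueness simultaneously.

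Second, I would show the proposed word is a geodesic. Its length is $\sum_i |x_i|$ plus the number of $t$-letters, the latter depending only on $s$ and $e$. Because the projection to $\Z$ kills all $t$-exponents, any alternative spelling of $x$ must manipulate digits via $tat^{-1}=a^k$, which exchanges $k$ copies of $a^{\pm 1}$ at one level for a single $a^{\pm 1}$ one level up at the cost of two extra $t$-letters. The inequality $k \geq 2r \geq 4$ (crucially, $k > 2$, explaining the hypothesis $r\geq 2$) makes this trade-off unfavourable whenever a digit remains in the balanced range, so the balanced choice is locally optimal and condition A enforces this optimality at boundaries between a $\pm r$ digit and its neighbour.

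The main obstacle is condition B: the forbidden subwords $a^{\pm r}ta^{\pm(r-2)}ta^{\mp 1}t^{-1}$ and $a^{\pm(r-1)}ta^{\mp 1}t^{-1}$ are locally A-admissible but globally non-geodesic, since in each case an explicit rewrite using a different A-admissible digit pattern produces a strictly shorter equivalent word. Proving that no further non-local obstructions exist is the hardest step; I would verify this by induction on the support size of $x$, checking that any attempted shortening either violates condition A at some position or increases the digit sum by enough to offset the $t$-saving. This global geodesic analysis is essentially carried out in Section 4 of \cite{CEG}, so in practice the proof reduces to invoking the relevant results there and checking that the present normal-form description captures precisely the geodesics they identify, with condition B recording exactly the exceptional short relations in the even case that have no analogue for $k$ odd.
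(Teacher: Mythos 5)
The paper does not prove this proposition at all: it states that Propositions \ref{prop:basegrpelemreps}--\ref{prop:2elemreps} are ``derived from section 4 of \cite{CEG}'', so your plan --- set up the balanced base-$k$ digit interpretation, and reduce the hard global geodesic analysis to the Collins--Edjvet--Gill classification --- is in substance the same route the authors take, just with more of the translation spelled out. The digit/carry framework, the greedy argument for existence, the role of the sign conventions in $A$ as tie-breakers between the two digits $r$ and $-r$ representing the same residue, and the observation that $k=2$ is excluded because the digit arithmetic degenerates, are all correct and are exactly what is needed to match the normal forms here with the geodesic forms (MWe1a)--(MWe4a) of \cite{CEG}.

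There is, however, one concrete misstatement that would derail the proof if you carried it out as written: you assert that \emph{both} forbidden subwords in condition $B$ are non-geodesic, i.e.\ that ``in each case an explicit rewrite \ldots produces a strictly shorter equivalent word.'' That is true for $a^{\pm(r-1)}ta^{\mp1}t^{-1}$ (replacing the final two digits $(\pm(r-1),\mp1)$ by the single digit $\mp(r+1)$ saves one letter), but it is false for $a^{\pm r}ta^{\pm(r-2)}ta^{\mp1}t^{-1}$: replacing the final three digits $(\pm r,\pm(r-2),\mp1)$ by $(\mp r,\mp(r+1))$ increases the digit sum by $2$ while removing two $t$-letters, so the two spellings have \emph{equal} length. (The paper's own computation in the second ``exceptional case'' of the proof of the even conjugacy-representative proposition exhibits exactly this equality $|w|=|v|$.) So this subword is excluded from $\mathcal{E}_e$ not because it is non-geodesic but because otherwise the same group element would have two distinct geodesic spellings in the list, violating \emph{uniqueness}. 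Your induction on support size, as described, only tests whether an attempted shortening succeeds, so it would never detect this pattern as needing exclusion; you need a separate uniqueness check that identifies equal-length $A$-admissible rewrites, which is precisely what condition $B$ encodes in that case.
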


%Now consider the last remaining case, when $k=2$, i.e. $G=BS(1,2)$.

\begin{proposition}\label{prop:2elemreps}
	 Let $k=2$, i.e. $G=BS(1,2)$. The set $\mathcal{E}_2$ of words in the following forms comprise a set of unique geodesic representatives for the elements in $\Z_k$.
	\begin{enumerate}[label=\normalfont 2\alph*.]
		\item\label{two-1} $\{\epsilon,a^{\pm1},a^{\pm2},a^{\pm3}\}$
		\item\label{two-2} $\{a^{x_0}ta^{x_1}t\cdots ta^{x_d}t^{-d}\mid d\geq1, |x_d|\in\{2,3\}, A\}$
		\item\label{two3} $\{t^{-b}a^{x_0}ta^{x_1}\cdots ta^{x_d}t^{-c}\mid b,c,d\geq1, d=b+c, x_0 \neq 0, |x_d|\in\{2,3\}, A\}$
		\item\label{two4} $\{t^{-d}a^{x_0}t\cdots ta^{x_d}\mid d\geq1, x_0 \neq 0, A\}$
	\end{enumerate}
Here, $A$ signifies the conditions $|x_i|\leq1$ for $i<d$, if $x_{i-1}\neq0$ then $x_i=0$ for $i<d$, if $x_d>0$ then $x_{d-1}\geq0$, and if $x_d<0$ then $x_{d-1}\leq 0$.
\end{proposition}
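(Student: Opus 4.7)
My plan is to adapt the strategy of \cite[Section 4]{CEG}, specialized to $k = 2$. Every element of $\Z_2 \subset BS(1,2)$ corresponds under the semidirect product isomorphism to a pair $(x, 0)$ with $x \in \Z[1/2]$, so any word representing it has $t$-exponent sum zero. Using the relation $tat^{-1} = a^2$ to shuffle $a$'s past $t$'s, any such word can be rewritten without increasing its length into the canonical shape
\[
t^{-b} a^{x_0} t a^{x_1} t \cdots t a^{x_d} t^{-c},
\]
with $b, c \geq 0$, $d = b + c$, and $x_i \in \Z$, representing the dyadic rational $2^{-b}\sum_{i=0}^d 2^i x_i = x$ and of length $2d + \sum_i |x_i|$. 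The proposition then asserts that the length-minimizing tuple $(b, c, d, x_0, \ldots, x_d)$ is unique and satisfies exactly one of 2a--2d.

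For existence, I would split into cases by the dyadic valuation of $x$. Integers with $|x| \leq 3$ land in form 2a (where $a^x$ is clearly shortest); larger integers land in 2b with $b = 0$, $c = d$; and non-integers $x = p/2^n$ (with $p$ odd) force $b \geq n$ and land in 2c (if $c \geq 1$) or 2d (if $c = 0$). For fixed $b$ and $d$, I would choose the digits $x_i$ by a greedy signed-binary expansion: the intermediate digits must satisfy $|x_i| \leq 1$ with no two consecutive non-zero digits (the classical non-adjacent form), from which condition $A$ on the interior follows by local base-$2$ carry arguments. The leading digit $x_d$ in forms 2b and 2c is allowed up to absolute value $3$ because shrinking it further would force $d$ up by one, costing two extra $t$-letters; the sign compatibility between $x_{d-1}$ and $x_d$ rules out cases where that inflation would still save length. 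The condition $x_0 \neq 0$ in 2c and 2d analogously prevents a redundant $t^{-1}$ at the front.

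The main obstacle is the geodesicity lower bound: showing that no word in $\{a, t\}^{\pm *}$ of length less than $2d + \sum |x_i|$ represents the same element. I would separate this into a $t$-count bound and an $a$-count bound. For the $t$-count, the dyadic valuation of $x$ forces at least $b$ occurrences of $t^{-1}$ in any representing word, and a horocyclic projection onto the Bass--Serre tree of $BS(1,2)$ forces the remaining $t^{\pm 1}$ letters needed to reach the correct level; for the $a$-count, the constraints on the $x_i$ realize the minimum of $\sum|x_i|$ over signed-binary expansions compatible with the fixed $b$ and $d$, by uniqueness of the non-adjacent form. Uniqueness of the whole tuple then falls out once these two bounds are simultaneously achieved, and disjointness of the four forms is immediate from the $(b, c)$-signature.
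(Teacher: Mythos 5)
The paper does not actually prove Proposition~\ref{prop:2elemreps}: it imports it (together with Propositions~\ref{prop:basegrpelemreps} and~\ref{prop:evenelemreps}) from Section~4 of \cite{CEG}, so your plan of adapting that source is exactly the intended route, and your identification of the interior digits as a non-adjacent signed-binary form, of the top digit $|x_d|\in\{2,3\}$ as a trade-off against two $t$-letters, and of the dyadic valuation as forcing the $t^{-b}$ prefix are all the right ingredients.

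There is, however, one step that fails as you describe it: the claim that ``using the relation $tat^{-1}=a^2$ to shuffle $a$'s past $t$'s, any such word can be rewritten without increasing its length into the canonical shape $t^{-b}a^{x_0}t\cdots ta^{x_d}t^{-c}$.'' The available local moves are $ta^i\mapsto a^{2i}t$ and $ta^it^{-1}\mapsto a^{2i}$, which send length $|i|+1$ to $2|i|+1$ and $|i|+2$ to $2|i|$ respectively, so they strictly \emph{increase} length as soon as $|i|\geq 2$ (resp.\ $|i|\geq 3$); a word such as $ta^{5}t^{-1}$ cannot be pushed into canonical shape by length-non-increasing local rewrites, even though a canonical-shape geodesic for the same element ($tata^2t^{-2}$, also of length $7$) exists. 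What is really needed is the global structural statement that \emph{geodesics} have the claimed shape --- this is \cite[Lemma 2.2]{CEG}, reproduced as Lemma~\ref{geo-lemma} in the paper, whose proof bounds the number of $t^{\pm1}$'s via the decomposition $x=\sum \pm 2^{m}$ over $a$-letters rather than via rewriting. Relatedly, for the $a$-count lower bound you should not appeal to ``uniqueness of the non-adjacent form'': NAF uniqueness/minimality concerns digits in $\{-1,0,1\}$ and minimizes the number of nonzero digits, whereas here one must minimize $2d+\sum_i|x_i|$ over \emph{all} integer digit sequences with $\sum x_i2^i=N$ (which is precisely why the optimum is not a NAF at the top digit); this joint optimization has to be carried out explicitly, as in \cite{CEG}.
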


\subsection{Context-free languages}
We will need some formal language theory (see for example \cite{HU}) in order to calculate the growth series of $\Z_k$ in Section \ref{sec:abelian}.
\begin{definition}
	Let $\mathcal{V}$ be a set of variables (usually denoted by upper case letters), and $\mathcal{T}$ a set of terminals (usually denoted by lower case letters).
	A \emph{context-free grammar} consists of a finite set of production rules of the form \[V\rightarrow w_1\mid w_2\mid\cdots\mid w_n\]
	where $V\in\mathcal{V}$, each $w_i\in(\mathcal{V}\cup\mathcal{T})^*$, and the $\mid$ symbol stands for exclusive `or'. We nominate one variable to be the starting variable.
\end{definition}
A context-free grammar produces a language in the following way. Start at the nominated starting variable, and perform substitutions according to the production rules, until the word consists only of terminals. The language $L\subset\mathcal{T}^*$ of all words that can be produced from the grammar is called a \emph{context-free} language. If each word is only produced in one way (i.e. via a unique sequence of production rules) then the language is called \emph{unambiguous context-free}.

\begin{theorem}[Chomsky-Sch\"utzenberger \cite{CS}] \label{CS}
	If $L$ is an unambiguous context-free language, its growth series is algebraic.
\end{theorem}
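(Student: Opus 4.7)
The plan is to translate the given unambiguous grammar into a system of polynomial equations whose solution is (a vector containing) the growth series of $L$, and then argue algebraicity by elimination.

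First, I would fix an unambiguous context-free grammar $(\mathcal{V},\mathcal{T},P,S)$ producing $L$. For each variable $V\in\mathcal{V}$, let $L_V\subseteq\mathcal{T}^*$ denote the sublanguage of strings derivable from $V$, and let $f_V(z)=\sum_{w\in L_V}z^{|w|}$ be its growth series; our goal is to show $f_S(z)$ is algebraic. Each production rule $V\to w_1\mid w_2\mid\cdots\mid w_n$, where $w_i=u_{i,1}\cdots u_{i,\ell_i}$ with $u_{i,j}\in\mathcal{V}\cup\mathcal{T}$, is translated into the polynomial relation
\[
f_V(z)=\sum_{i=1}^n \prod_{j=1}^{\ell_i}\varphi(u_{i,j}),
\]
where $\varphi(u)=z$ if $u\in\mathcal{T}$ and $\varphi(u)=f_{u}(z)$ if $u\in\mathcal{V}$. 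The crucial point here is that unambiguity guarantees every word in $L_V$ has exactly one derivation from $V$, so enumerating derivations (which the right-hand side does, weighted by total terminal length) enumerates words without overcounting; that is, the $f_V$ genuinely satisfy this system.

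Writing $\mathcal{V}=\{V_1,\ldots,V_N\}$ and collecting these equations, one obtains a system of the form $P_i(z,f_{V_1},\ldots,f_{V_N})=0$ for polynomials $P_i\in\mathbb{Z}[z,y_1,\ldots,y_N]$. By iterated elimination of $y_2,\ldots,y_N$ using resultants in the integral domain $\mathbb{Z}[z,y_1,\ldots,y_N]$, one obtains a single nontrivial polynomial relation $Q(z,y_1)\in\mathbb{Z}[z,y_1]$ satisfied by $f_{V_1}=f_S$; this exhibits $f_S(z)$ as algebraic over $\mathbb{Q}(z)$.

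Two technical points will require care. First, one must verify that the solution $(f_{V_1},\ldots,f_{V_N})$ in $\mathbb{Z}[[z]]$ is actually singled out by the polynomial system, or at least that the nonzero resultant $Q$ does not vanish on it; the standard way is to observe that the system has a unique solution in the ideal $z\mathbb{Z}[[z]]^N$ because the substitution map is contractive with respect to the $(z)$-adic topology on formal power series (all productions in a reduced grammar consume at least one terminal, which one can arrange by eliminating $\varepsilon$-productions and unit productions in a standard preprocessing step without losing unambiguity). Second, one must ensure that successive resultants remain nonzero; this is the main obstacle in a fully rigorous write-up and is typically handled by showing that one can put the grammar in a convenient normal form (e.g.\ Chomsky normal form) so that the resulting system is a system of quadratic equations to which elimination applies cleanly. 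Granting these standard technicalities, $f_S(z)$ satisfies a nonzero polynomial equation over $\mathbb{Q}(z)$ and is therefore algebraic.
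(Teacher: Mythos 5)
The paper offers no proof of this statement: it is quoted as a classical theorem of Chomsky and Sch\"utzenberger, with only the DSV translation from grammars to polynomial systems described afterwards for computational use. Your argument is precisely the standard proof underlying that citation (the same grammar-to-equations translation, justified by unambiguity, followed by elimination via resultants), and the technical caveats you flag about properness of the grammar and non-vanishing of successive resultants are exactly the ones the classical references resolve, so your outline is sound and consistent with how the paper uses the result.
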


There is a method for explicitly calculating the series, known as the DSV method, which is as follows. Convert the grammar into a system of equations by replacing:
\begin{itemize}
	\item the empty word $\epsilon$ with the integer $1$,
	\item each terminal with the formal variable $z$,
	\item each variable $V$ with a function $V(z)$,
	\item the or $|$ with addition $+$,
	\item concatenation with multiplication,
	\item the production arrow with $=$.
\end{itemize}
Solving the system of equations for the initial variable then gives the growth series, an algebraic function of $z$.

\section{The conjugacy classes $[(x,0)]$ in $BS(1,k)$}\label{sec:abelian}

%Let $N$ be the normal subgroup of $G$ consisting of the elements of the form $(x,0)$
In this section we show that the conjugacy growth series of the subgroup $\Z_k$, relative to $G=BS(1,k)$, is rational with respect to the generating set $\{a,t\}$. We explicitly calculate the series, and extract the growth rate.

\begin{lemma}
Two elements $(x,0)$ and $(y,0)$ are conjugate in $G$ if and only if there is some $e\in \Z$ such that $x = k^e y$.
\end{lemma}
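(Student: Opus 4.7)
The plan is to prove both directions by working directly in the semidirect product description $G \cong \Z_k \rtimes \Z$, using the formulas for conjugation by generators stated in \eqref{basicconjugation} together with the fact that the action of $\Z$ on $\Z_k$ is multiplication by $k$.

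For the ($\Leftarrow$) direction, I would first observe from \eqref{basicconjugation} (or its analogue for $m=0$, which is immediate) that conjugation of $(x,0)$ by $a^{\pm 1}$ fixes $(x,0)$, since the factor $1-k^m$ vanishes when $m=0$. On the other hand, conjugation of $(x,0)$ by $t$ yields $(kx,0)$, and by $t^{-1}$ yields $(k^{-1}x,0)$. Iterating, conjugation by $t^e$ gives $(k^e x, 0)$, so $(k^e y,0) \sim (y,0)$ for every $e \in \Z$.

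For the ($\Rightarrow$) direction, I would show that the conjugacy class of $(x,0)$ consists exactly of the elements $\{(k^e x, 0) : e \in \Z\}$. The cleanest way is to compute $(y,n)(x,0)(y,n)^{-1}$ in the semidirect product directly: using $(y,n)^{-1} = (-k^{-n}y,-n)$ and the multiplication rule $(u,p)(v,q) = (u + k^p v, p+q)$, a two-line computation collapses to $(k^n x, 0)$, independent of $y$. Thus every conjugate of $(x,0)$ has the form $(k^e x, 0)$ for some integer $e$, namely the $t$-exponent of the conjugating element. Equivalently, one can argue inductively on the length of a conjugating word in $\{a^{\pm 1}, t^{\pm 1}\}$ using the two cases handled above.

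I do not anticipate any real obstacle; the only mild subtlety is being careful that the identity $a^t = a^k$, read in the semidirect form, really corresponds to multiplication by $k$ in $\Z_k$, so that $t^{-1}$-conjugation produces $k^{-1}x \in \Z_k$ (which is well-defined precisely because $\Z_k$ is closed under division by $k$). Once this is noted, the equivalence drops out from the explicit formula for conjugation.
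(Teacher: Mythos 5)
Your proof is correct and follows essentially the same route as the paper: conjugation by $a^{\pm1}$ fixes elements of $\Z_k$ (the paper deduces this from $\Z_k$ being abelian and normal, you from the explicit formula), conjugation by $t^{\pm1}$ multiplies by $k^{\pm1}$, and hence the conjugacy class of $(x,0)$ is exactly $\{(k^e x,0)\mid e\in\Z\}$. Your direct computation of $(y,n)(x,0)(y,n)^{-1}=(k^n x,0)$ is a slightly more explicit packaging of the same argument.
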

\begin{proof}
The group $\Z_k$ is abelian, and $(y,0)\in\Z_k\lhd\Z_k\rtimes\Z$, thus $(y,0)^a=(y,0)$. We also have $(y,0)^t=(ky,0)$. Therefore
$[(y,0)]=(y,0)^{\langle t\rangle} = \{(k^ey,0)\mid e\in\Z\}$.
\end{proof}

Thus, for every conjugacy class $[(y,0)]$, there is a unique $(x,0)\in[(y,0)]$ such that $x\in\Z$ and $k\nmid x$.

We treat the cases $k$ odd and even separately.

\subsection{Odd case}

Let $k=2r+1$ for some integer $r\geq 1$. 
\begin{proposition}
	In $BS(1, 2r+1)$ the set of words \[\mathcal{C}_o=\{\epsilon,a^{\pm1},\ldots a^{\pm(r+1)}\}\cup \{a^{x_0}ta^{x_1}t\cdots ta^{x_d}t^{-d}\mid d\geq1, x_0\neq0, x_d\neq0, A\}, \] where $A$ signifies the conditions $|x_d|\leq r+1$, $|x_i|\leq r$ for $i<d$, and if $x_{d-1}=\pm r$ then $x_d\neq\mp1$, comprises a set of unique geodesic representatives for the conjugacy classes of $G$ that lie in $\Z_k$.
\end{proposition}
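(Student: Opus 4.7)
The plan is to split the claim into three pieces: (a) every word in $\mathcal{C}_o$ is a geodesic representing a distinct element of $G$; (b) the induced map $\mathcal{C}_o \to \{\text{conjugacy classes of }G\text{ in }\Z_k\}$ is a bijection; and (c) the length of each $w \in \mathcal{C}_o$ equals the minimum length in its conjugacy class.

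For (a), I first observe that $\mathcal{C}_o \subset \mathcal{E}_o$: it is the union of the words of form \ref{odd1} together with those of form \ref{odd2} satisfying the extra restriction $x_0 \neq 0$. Proposition \ref{prop:basegrpelemreps} then yields that each word in $\mathcal{C}_o$ is a geodesic in $G$ and that distinct words represent distinct elements. For (b), the preceding lemma shows that each conjugacy class in $\Z_k$ is an orbit $\{(k^e p, 0) : e \in \Z\}$ of a unique canonical $(p, 0)$ with $p \in \Z$ and $k \nmid p$ (or $p = 0$ for the trivial class). The elements represented by $\mathcal{C}_o$-words are exactly these canonical integers: $a^j$ with $|j| \leq r+1 < k$ satisfies $k \nmid j$ (or $j=0$); and a form \ref{odd2} word with $x_0 \neq 0$ represents the integer $\sum_{i=0}^d x_i k^i \equiv x_0 \pmod{k}$, which, since $0 < |x_0| \leq r < k$, is not divisible by $k$. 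Conversely, any canonical $p$ has its unique $\mathcal{E}_o$ normal form lying in $\mathcal{C}_o$: of form \ref{odd1} when $|p| \leq r+1$, and of form \ref{odd2} with units digit $x_0 \equiv p \not\equiv 0 \pmod{k}$ otherwise. This gives the bijection.

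The substantive work is (c). Fix $w \in \mathcal{C}_o$ representing $(p,0)$, and write $L_p = |w|$. I must show $L_p \leq |(k^e p, 0)|$ for every $e \in \Z \setminus \{0\}$. When $e > 0$, the integer $k^e p$ has $\mathcal{E}_o$ normal form of type \ref{odd2} obtained from that of $p$ by prefixing $e$ zero digits, so its length is exactly $L_p + 2e$. When $e = -j < 0$, the non-integer $p/k^j \in \Z_k \setminus \Z$ has normal form of type \ref{odd3} or \ref{odd4}; a direct length analysis of these forms, using that any valid digit expansion $(x_0, \ldots, x_d)$ satisfying $\sum x_i k^i = p k^{d-j}$ has digit sum at least that of the standard expansion of $p$ (by shift invariance of the representation), shows that the length is $L_p + 2\max(0, j - d_p)$, where $d_p + 1$ is the number of digits in $p$'s standard expansion. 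In every case the length is at least $L_p$, so $w$ is a conjugacy geodesic.

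The main obstacle is the $e < 0$ case of (c): this requires unpacking the form \ref{odd3} and form \ref{odd4} normal forms and exploiting the uniqueness of the leading-digit-at-most-$r+1$ balanced base-$k$ expansion of integers to bound digit sums across conjugates. The $e > 0$ case reduces to a routine shift of the \ref{odd2} normal form, and parts (a) and (b) are essentially bookkeeping using Proposition \ref{prop:basegrpelemreps} and the preceding lemma.
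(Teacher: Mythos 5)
Your decomposition is sound and parts (a) and (b) are essentially correct: the containment $\mathcal{C}_o=\text{Oa}\cup(\text{Ob with }x_0\neq 0)\subset\mathcal{E}_o$, the observation that forms \ref{odd3} and \ref{odd4} represent non-integers while form \ref{odd2} words represent integers congruent to $x_0$ mod $k$, and the reduction of conjugacy in $\Z_k$ to multiplication by powers of $k$ via the preceding lemma all hold up. The genuine gap is in step (c) for $e<0$, which you yourself flag as the main obstacle but then dispatch with an assertion rather than an argument. The claim that ``any valid digit expansion $(x_0,\ldots,x_d)$ satisfying $\sum x_ik^i=pk^{d-j}$ has digit sum at least that of the standard expansion of $p$ (by shift invariance of the representation)'' is not a proof: ``shift invariance'' is not defined or established, the statement quantifies over arbitrary expansions when what you need is a statement about a single word (the unique $\mathcal{E}_o$-normal form of $(k^{-j}p,0)$, whose length \emph{is} $|(k^{-j}p,0)|$), and no argument is given that the asserted length formula $L_p+2\max(0,j-d_p)$ holds. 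The fix is available with the tools you are already using: write down the candidate normal form of $k^{-j}p$ explicitly as the ``shift'' $t^{-j}a^{x_0}t\cdots ta^{x_{d_p}}t^{-(d_p-j)}$ (padding with trailing zero digits when $j>d_p$), check it satisfies the defining conditions of form \ref{odd3} or \ref{odd4}, and invoke the uniqueness in Proposition \ref{prop:basegrpelemreps} to conclude that this word is the geodesic representative, whence its length can simply be read off. Without that verification the central inequality $|w|\le|(k^ep,0)|$ is unproven.

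For comparison, the paper runs the argument in the opposite direction and thereby avoids computing any lengths of conjugates: it shows that every word of $\mathcal{E}_o$ in forms \ref{odd3} and \ref{odd4} is a literal cyclic permutation of a form-\ref{odd2} word (hence conjugate and of \emph{equal} word length), and that a form-\ref{odd2} word with leading zero digits is conjugate to a strictly shorter $\mathcal{C}_o$-word obtained by deleting the leading $t\cdots t^{-1}$ pairs; uniqueness of $\mathcal{E}_o$ then gives both the injectivity on conjugacy classes and the minimality simultaneously. Your route, once the normal forms of the conjugates $(k^ep,0)$ are actually exhibited and verified, yields the same conclusion and even slightly more (exact length formulas for every conjugate), but as written the decisive step is asserted rather than established.
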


\begin{proof}
Let $\mathcal{E}_o$ be as in Proposition  \ref{prop:basegrpelemreps} and note that $\mathcal{C}_o\subset\mathcal{E}_o$. We use the following key observation: if an element is represented by a word in $\mathcal{E}_o\setminus\mathcal{C}_o$, then it cannot be represented by a word in $\mathcal{C}_o$, by the uniqueness condition on $\mathcal{E}_o$. We will first prove that no pair of words in $\mathcal{C}_o$ represent the same conjugacy class, and then prove that every word in $\mathcal{E}_o$ is conjugate to a word in $\mathcal{C}_o$ with at most the same length. Then since every group element is represented in $\mathcal{E}_o$, every conjugacy class is represented (uniquely) in $\mathcal{C}_o$. Furthermore, this unique representative has length at most that of each of the corresponding (element-minimal) representatives in $\mathcal{E}_o$. This proves the proposition.
	
	Proposition \ref{prop:basegrpelemreps} implies that no two words in $\mathcal{C}_o$ represent equal elements. We show that no two words represent conjugate group elements either. Suppose, on the contrary, that $w,v\in\mathcal{C}_o$ represent conjugate elements. So there exists a non-zero integer $m$ such that $t^mwt^{-m}=_G v$. First suppose that $w=a^n$ for $|n|\leq r+1$. Then $t^ma^nt^{m-1}$ is a word in either \eqref{odd2} (with $x_0=0$) or \eqref{odd3}, depending on the sign of $m$, and thus by the above observation the word $v\notin\mathcal{C}_o$, which is a contradiction. Now suppose that $w=a^{x_0}ta^{x_1}t\cdots a^{x_d}t^{-d}$ for $d\geq 1$, $x_0\neq0$, with conditions $A$ and $B$. So $v=t^mwt^{-m}=t^ma^{x_0}ta^{x_1}t\cdots a^{x_d}t^{-d-m}$. If $m>0$, $v$ is a word in \eqref{odd2} (and not in $\mathcal{C}_o$). If $m<0$, $v$ is a word in either \eqref{odd3} or \eqref{odd4}. In both cases $v\notin\mathcal{C}_o$, which is again a contradiction.
	
	Now let $w\in\mathcal{E}_o$. We show that there exists $v\in\mathcal{C}_o$ such that $w$ and $v$ represent conjugate group elements, and moreover $|w|\geq|v|$ (as words). We assume that $w\notin\mathcal{C}_o$ (otherwise the claim is trivial). First, suppose that $w$ is in form $\eqref{odd2}$, and let $i>0$ be such that $x_i$ is the left-most non-zero power of $a$. Then the word $v=a^{x_i}ta^{x_{i+1}}t\cdots ta^{x_d}t^{-d+i}$ is in $\mathcal{C}_o$ and represents a conjugate of $\overline{w}$. Further, the number of $a^{\pm1}$s in $v$ is the same as that in $w$, and the number of $t^{\pm1}$s in $v$ is $(d-i)+(d-i)<2d$ and therefore $|v|<|w|$. Now suppose $w$ is of the form \ref{odd3} (resp. \ref{odd4}). Let $v=a^{x_0}ta^{x_1}t\ldots a^{x_d}t^{-d}$. Since $v$ is a leftward cyclic permutation of $w$ by $b$ (resp. $d$) places, the words represent conjugate elements and are of equal length.	
\end{proof}
%\vspace{-1cm}
\begin{proposition}\label{prop:abelianodd} Let $k=2r+1$, where $r \geq 1$. 
\begin{enumerate}
		\item\label{two1} In $BS(1, k)$ the set $\mathcal{C}_o$ is unambiguous context-free.
		\item\label{two2} The subgroup $\Z_k$ has rational relative conjugacy growth.
\end{enumerate}
\end{proposition}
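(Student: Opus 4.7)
The plan is to exhibit an explicit context-free grammar for $\mathcal{C}_o$ that is simultaneously unambiguous and \emph{linear} (at most one nonterminal occurrence on each right-hand side); linearity will promote the algebraic conclusion of the Chomsky-Sch\"utzenberger theorem to the rationality asserted in part (\ref{two2}).

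First I split $\mathcal{C}_o$ into the finite part $\{\epsilon, a^{\pm 1}, \ldots, a^{\pm(r+1)}\}$, handled by trivial terminal productions from the start symbol, and the infinite part of words $a^{x_0} t a^{x_1} t \cdots t a^{x_d} t^{-d}$ with $d \geq 1$. The infinite part has the classic matched-bracket structure: $d$ letters $t$ paired with $d$ letters $t^{-1}$. I produce them one matched pair at a time, from outside in, using a recursive nonterminal $M_\sigma$ of schematic form
\[
S \to a^{x_0}\, t\, M_{\sigma(x_0)}\, t^{-1},
\qquad
M_\sigma \to a^{x_i}\, t\, M_{\sigma(x_i)}\, t^{-1} \,\bigm|\, a^{x_d},
\]
where $x_0$ ranges over $\{\pm 1, \ldots, \pm r\}$, middle exponents $x_i$ over $\{-r, \ldots, r\}$, and terminal exponents $x_d$ over $\{\pm 1, \ldots, \pm(r+1)\}$. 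The subscript $\sigma \in \{0, +, -\}$ records whether the $a$-block just emitted by the parent rule had exponent in $\{-r+1, \ldots, r-1\}$, $+r$, or $-r$; this is exactly the information required to enforce the adjacency constraint in $A$, by deleting the option $x_d = -1$ from the terminating alternative of $M_+$, and $x_d = +1$ from that of $M_-$. Unambiguity is immediate: each word determines $d$ and the sequence $(x_0, \ldots, x_d)$ uniquely, which in turn determines the derivation.

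For part (\ref{two2}) the crucial structural observation is that every production above has at most one nonterminal occurrence on its right-hand side. Applying the DSV translation from Section \ref{sec:prelim} therefore yields a finite \emph{linear} system over $\Q(z)$ in the generating functions of $S, M_0, M_+, M_-$, whose unique solution for $S(z)$ is a rational function by Cramer's rule. By the preceding proposition, $\mathcal{C}_o$ is a set of geodesic conjugacy representatives, one per conjugacy class of $G$ contained in $\Z_k$, so $S(z)$ coincides with the relative conjugacy growth series of $\Z_k$ in $G$.

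The crux is the joint design of the grammar: encoding the single adjacency constraint without either duplicating $M$ into two simultaneous subtrees (which would destroy linearity and yield only an algebraic conclusion) or introducing competing derivations of the same word. Passing a three-valued state through the subscript of $M$ is the minimal amount of memory that accomplishes both, and it is the step on which the whole argument turns.
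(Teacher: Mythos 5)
Your proof is correct and takes essentially the same approach as the paper: both exhibit an unambiguous \emph{linear} context-free grammar for $\mathcal{C}_o$ whose nonterminals carry exactly the state ``was the previously emitted exponent $+r$, $-r$, or neither'' (your $M_0,M_+,M_-$ play the role of the paper's $U,V,W$), and both then apply the DSV translation. The only difference is stylistic: you conclude rationality a priori from the linearity of the system via Cramer's rule, whereas the paper solves the system explicitly to obtain the closed form (\ref{abelianseriesodd}), which it needs later for the root-location argument in Corollary \ref{cor:oddabelian}.
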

\begin{proof}
(1)
First note that $\mathcal{C}_o$ is not regular since the exponent-sum of $t$ has to be $0$, and this cannot be achieved by a finite state automaton.

We show the language is context-free by exhibiting an explicit grammar. We use capital letters for variables and lower case for terminals. 
Write $a^{\pm n}$ as shorthand for the concatenation of $n$ copies of the terminal $a^{\pm1}$. It is easy to see that the following context-free grammar, starting from $S$, produces the set in question unambiguously.

\begin{align*} 
S&\rightarrow\epsilon ~|~ A ~|~ T,  \  \  A\rightarrow a^{-r-1}~|~\cdots~|~a^{-1}~|~a~|~\cdots~|~a^{r+1}\\
B&\rightarrow a^{-r+1}~|~\cdots ~|~ a^{-1}~|~a~|~\cdots~|~a^{r-1}, \ \ T\rightarrow BtUt^{-1}~|~a^r tVt^{-1}~|~a^{-r}tWt^{-1}\\
U&\rightarrow A~|~tUt^{-1}~|~T, \ \ V\rightarrow tUt^{-1}~|~T~|~a^{-r-1}~|~\cdots~|~a^{-2}~|~a~|~\cdots~|~a^{r+1}\\
W&\rightarrow tUt^{-1}~|~T~|~a^{-r-1}~|~\cdots~|~a^{-1}~|~a^2~|~\cdots~|~a^{r+1}.
\end{align*}

(2) By Theorem \ref{CS} the growth series of the language $\mathcal{C}_o$, and hence the relative conjugacy growth series of the subgroup $\Z_k$, is algebraic. However, a stronger result holds here. 
%\begin{proposition}\label{prop:abelianodd}
%
%\end{proposition}
%\begin{proof}
Applying the DSV method to the grammar above gives the growth series of the language $\mathcal{C}_o$. The production rules become the equations:
\begin{align*}
S(z)&=1 + A(z) + T(z), \ \ A(z)=2\sum_{i=1}^{r+1}z^i = 2\frac{z-z^{r+2}}{1-z},\\
B(z)&=2\sum_{i=1}^{r-1}z^i = 2\frac{z-z^r}{1-z}, \ \ T(z)=B(z)U(z)z^2 + V(z)z^{r+2} + W(z)z^{r+2},\\
U(z)&=A(z) + U(z)z^2 + T(z), \ \ V(z)=U(z)z^2 + T(z) + 2\sum_{i=1}^{r+1}z^i - z,\\
W(z)&=U(z)z^2 + T(z) + 2\sum_{i=1}^{r+1}z^i - z.
\end{align*}
Solving these equations for $S(z)$ we find that
\begin{equation}\label{abelianseriesodd}
S(z) = \frac{2z^{r+6}-2z^{r+5}-4z^{r+4}+2z^{r+2}+3z^3+z^2-z-1}{z^3-2z^{r+3}+z^2+z-1}.
\end{equation}
\end{proof}

\begin{corollary}\label{cor:oddabelian}
The conjugacy classes in $\Z_k$, for $k=2r+1$, have growth rate in the range $(\frac{4}{3},2)$. 
\end{corollary}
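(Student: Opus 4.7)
The plan is to locate the radius of convergence $R$ of the rational function $S(z)$ from Proposition~\ref{prop:abelianodd} in the open interval $(1/2, 3/4)$, so that the growth rate $1/R$ falls in $(4/3, 2)$. The main tool is Pringsheim's theorem: because $S(z)$ has non-negative coefficients and is not a polynomial (there are infinitely many conjugacy classes in $\Z_k$), its radius of convergence $R$ is a pole of $S(z)$ lying on the positive real axis, hence a positive real root of the denominator in reduced form.

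The first step is to show that $D(z) = z^3 - 2z^{r+3} + z^2 + z - 1$ has no root in $[0, 1/2]$. Since $-2z^{r+3} \leq 0$ there,
\[
D(z) \leq z^3 + z^2 + z - 1 \leq \tfrac{1}{8} + \tfrac{1}{4} + \tfrac{1}{2} - 1 = -\tfrac{1}{8} < 0
\]
on this interval, forcing $R > 1/2$ and growth rate $< 2$. The second step is to evaluate
\[
D(\tfrac{3}{4}) = \tfrac{47}{64} - 2(\tfrac{3}{4})^{r+3} \geq \tfrac{47}{64} - 2(\tfrac{3}{4})^{4} = \tfrac{13}{128} > 0
\]
for every $r \geq 1$. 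By the intermediate value theorem, $D$ has a root $\rho \in (1/2, 3/4)$, which, by the previous step, is necessarily the smallest positive one.

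The main obstacle is to verify that $\rho$ is actually the radius of convergence rather than being cancelled by the numerator $N(z) = 2z^{r+6}-2z^{r+5}-4z^{r+4}+2z^{r+2}+3z^3+z^2-z-1$. Since $N(1) = D(1) = 0$, I would first cancel the common factor $(z-1)$, writing $D(z) = (z-1)D_1(z)$ with $D_1(z) = 1 - z^2 - 2z^3 - 2z^4 - \cdots - 2z^{r+2}$; a quick computation shows $D_1'(z)$ is a sum of strictly negative terms on $(0, \infty)$, so $D_1$ is strictly decreasing from $D_1(0) = 1 > 0$ and has a unique positive real root, which must coincide with $\rho$. If $\rho$ were further cancelled with the reduced numerator $N(z)/(z-1)$, the fully reduced denominator of $S(z)$ would have no positive real root at all, contradicting Pringsheim's theorem applied to $S(z)$. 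Hence $\rho$ is the smallest positive pole, so $R = \rho \in (1/2, 3/4)$ and the growth rate $1/\rho$ lies in $(4/3, 2)$.
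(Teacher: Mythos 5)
Your proposal is correct, and while it follows the paper's skeleton (locate a root of the denominator in $(\frac12,\frac34)$ by the intermediate value theorem, then argue that this root really is the dominant singularity), it handles the crucial final step by a genuinely different route. The paper rules out cancellation with the numerator by brute force: using the relation $2\rho_o^{r+3}=\rho_o^3+\rho_o^2+\rho_o-1$ to reduce the numerator at $z=\rho_o$ to the fixed polynomial $a^7-2a^5-a^4+a^3+2a^2-1$, whose only real roots are $\pm1$. You instead factor out the common root at $z=1$, observe that the cofactor $D_1(z)=1-z^2-2z^3-\cdots-2z^{r+2}$ is strictly decreasing on $(0,\infty)$ and hence has a unique positive root $\rho$, and then invoke Pringsheim's theorem: since $S(z)$ has non-negative coefficients and infinitely many of them, its (finite, positive) radius of convergence must itself be a pole on the positive real axis, which forces that pole to be $\rho$ and in particular forbids $\rho$ from cancelling. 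Your soft argument is cleaner and buys something extra: it simultaneously rules out complex poles of smaller modulus, a point the paper's proof addresses only for real roots (via the sign analysis on $(-1,0)$ and $[0,\frac12]$). What the paper's computation buys in exchange is a purely algebraic certificate that $\rho_o$ is a pole, independent of the combinatorial interpretation of the series. All of your intermediate claims check out: $D(z)\le z^3+z^2+z-1<0$ on $[0,\frac12]$, $D(\frac34)=\frac{47}{64}-2(\frac34)^{r+3}\ge\frac{13}{128}>0$ for $r\ge1$, and the factorisation $D(z)=(z-1)D_1(z)$ is as you state.
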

\begin{proof}
Denote by $d_o$ the denominator of $S(z)$ in (\ref{abelianseriesodd}), that is, $d_o(z)=z^3-2z^{r+3}+z^2+z-1= z^3(1-z^r)+z(1-z^{r+2})+(z^2-1)$, which implies that for $z \in (-1,0)$, $d_o(z) < 0$. Also, $d_o(\frac{1}{2})=-\frac{1}{8}-\frac{1}{2^{r+2}}<0$ and $d_o(\frac{3}{4})=\frac{47}{64}-\frac{27}{32}(\frac{3}{4})^r>0$, so there is a smallest root $\rho_o \in (\frac{1}{2}, \frac{3}{4})$ of $d_o$. Furthermore, $d_o(0)=-1$ and $d'_o(z)>0$ for $z\in[0,\frac{1}{2}]$, so $\rho_o$ is the real root with smallest absolute value. 

Write $a=\rho_0$ for ease of notation. The fact that $a$ is a root of the denominator gives $2a^{r+3}=a^3+a^2+a-1$. Using this identity we can substitute each $a^{\geq r}$ by the appropriate expression into the numerator and obtain $2a^{r+6}-2a^{r+5}-4a^{r+4}+2a^{r+2}+3a^3+a^2-a-1 =$ $a^7-2a^5-a^4+a^3+2a^2-1$. Furthermore, $a^7-2a^5-a^4+a^3+2a^2-1=0$ only for $a=\pm 1$, which is not the case, as $a \in (\frac{1}{2}, \frac{3}{4})$. Thus  $\rho_o$ is not a root of the numerator of $S(z)$ in (\ref{abelianseriesodd}), so the growth rate, which is the reciprocal of $\rho_o$, lies in the given range.
\end{proof}

\subsection{Even case}

Let $k=2r$, for some integer $r\geq2$.

\begin{proposition}
In $G=BS(1,2r)$, $r\geq 2$, the set of words 
\begin{align*}
\mathcal{C}_e=\{\epsilon,a^{\pm1},\ldots,a^{\pm(r+1)}\}\cup \{a^{x_0}ta^{x_1}\cdots a^{x_d}t^{-d}\mid d\geq1,A,B,x_0\neq0\}
\end{align*}
comprises a set of unique geodesic representatives for the conjugacy classes of $G$ that lie in $\Z_k$. Here, $A$ signifies the conditions $|x_d|\leq r+1$, and for each $0\leq i<d$, $|x_i|\leq r$, if $x_{i-1}=r$ then $0<x_i<r$ for $i<d$, and if $x_{i-1}=-r$ then $-r<x_i<0$. And $B$ signifies that the following subwords are forbidden: $a^{\pm r}ta^{\pm(r-2)}ta^{\mp1}t^{-1}$, $a^{\pm(r-1)}ta^{\mp1}t^{-1}$.
\end{proposition}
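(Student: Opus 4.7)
The plan is to mirror the odd-case argument from the preceding proposition. The starting point is the inclusion $\mathcal{C}_e \subset \mathcal{E}_e$, together with the uniqueness of $\mathcal{E}_e$-representatives from Proposition \ref{prop:evenelemreps}; a direct consequence is that if an element of $\Z_k$ has any representation in $\mathcal{E}_e \setminus \mathcal{C}_e$, it cannot be represented by any word in $\mathcal{C}_e$. The proof then reduces to two claims: (i) no two distinct words in $\mathcal{C}_e$ represent conjugate elements, and (ii) every word in $\mathcal{E}_e$ is conjugate to a word in $\mathcal{C}_e$ of no greater length. Since every element of $\Z_k$ has a representative in $\mathcal{E}_e$, these two claims together yield a set of unique conjugacy-geodesic representatives in $\mathcal{C}_e$.

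For (i), I would use that $\Z_k$ is an abelian normal subgroup of $G$, so conjugation of $(x,0)\in\Z_k$ by $h\in G$ depends only on the $t$-exponent of $h$ and acts by $(x,0)\mapsto(k^m x,0)$. Hence if $w,v\in\mathcal{C}_e$ represent conjugate elements, then $t^m w t^{-m}=_G v$ for some nonzero $m$. Inspecting the two shapes of $\mathcal{C}_e$-words (the short $a^n$ and the long $a^{x_0}t\cdots ta^{x_d}t^{-d}$ with $x_0\neq0$), one checks that the element $v$ has its $\mathcal{E}_e$-geodesic in form \ref{even2} with leading $x_0=0$ when $m>0$, or in form \ref{even3} or \ref{even4} when $m<0$. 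In every sub-case this $\mathcal{E}_e$-geodesic lies outside $\mathcal{C}_e$, so by uniqueness $v$ has no $\mathcal{C}_e$-representative, contradicting $v\in\mathcal{C}_e$.

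For (ii), I process the four $\mathcal{E}_e$-forms in turn. Form \ref{even1} words already lie in $\mathcal{C}_e$. A form \ref{even2} word with $x_0\neq0$ is already of the right shape; if $x_0=0$, conjugating by $t^{-1}$ produces a strictly shorter form \ref{even2} word, and iteration terminates in $\mathcal{C}_e$ (or collapses to form \ref{even1}). For a form \ref{even3} word $t^{-b}a^{x_0}t\cdots ta^{x_d}t^{-c}$, the $t$-exponent constraint forces $d=b+c$, so conjugation by $t^b$ produces $a^{x_0}t\cdots ta^{x_d}t^{-d}$, a form \ref{even2} word of the same length with $x_0\neq0$. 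Form \ref{even4} is treated symmetrically by conjugating by $t^d$.

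The main obstacle I anticipate is checking that the output of each of these cyclic manipulations actually satisfies the $A$- and $B$-conditions of $\mathcal{C}_e$. Since the cyclic shifts leave the interior $a$-blocks untouched, all local conditions on consecutive $x_i$-pairs and every forbidden-subword condition from $B$ are inherited from the original $\mathcal{E}_e$-membership; only the boundary seam produced by the shift requires fresh inspection. I expect this to split into several sub-cases depending on the signs and magnitudes of the $x_i$ at the seam, particularly in the $x_{i-1}=\pm r$ situations, and this boundary analysis will be the most delicate part of the proof.
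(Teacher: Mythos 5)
Your overall architecture is the same as the paper's: the key observation coming from $\mathcal{C}_e\subset\mathcal{E}_e$ and uniqueness of $\mathcal{E}_e$-representatives, claim (i) proved by noting that $t^mwt^{-m}$ is literally a word in form \eqref{even2}, \eqref{even3} or \eqref{even4} lying outside $\mathcal{C}_e$, and claim (ii) proved by cyclic permutation. Claim (i) as you present it is essentially the paper's argument, and your treatment of forms \eqref{even2} and \eqref{even3} in claim (ii) is also what the paper does.

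The genuine gap is exactly at the point you flag and then defer. Your assertion that ``every forbidden-subword condition from $B$ is inherited from the original $\mathcal{E}_e$-membership'' is false for form \eqref{even4}: condition $B$ is \emph{not} imposed on form \eqref{even4} words (only $A$ is), so such a word may end in $\ldots ta^{\pm(r-1)}ta^{\mp1}$ or $\ldots ta^{\pm r}ta^{\pm(r-2)}ta^{\mp1}$. After conjugating by $t^d$, the resulting word $a^{x_0}t\cdots ta^{\mp1}t^{-d}$ acquires a forbidden $B$-subword at the seam and so lies in neither $\mathcal{E}_e$ nor $\mathcal{C}_e$. No amount of ``fresh inspection'' of the seam repairs this: the cyclic permutate simply fails to be a legal representative, and one must pass to a genuinely different word for the same group element, not merely another cyclic shift. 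The paper isolates these as two exceptional cases and rewrites them, using $a^{\pm(r-1)}ta^{\mp1}t^{-1}=_G a^{\mp(r+1)}$ to produce $v=a^{x_0}t\cdots ta^{\mp(r+1)}t^{-d+1}\in\mathcal{C}_e$ (strictly shorter), and $a^{\pm r}ta^{\pm(r-2)}ta^{\mp1}t^{-1}=_G a^{\mp r}ta^{\mp(r+1)}$ to produce a $\mathcal{C}_e$-word of equal length; in each case an explicit length comparison is carried out. Without these rewritings and length estimates, claim (ii) --- and hence the conclusion that every conjugacy class meeting $\Z_k$ has a length-minimal representative in $\mathcal{C}_e$ --- is not established.
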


\begin{proof}
Let $\mathcal{E}_e$ be as in Proposition \ref{prop:evenelemreps} and note that $\mathcal{C}_e\subset\mathcal{E}_e$. We use the following key observation: if an element is represented by a word in $\mathcal{E}_e\setminus\mathcal{C}_e$, then it cannot be represented by a word in $\mathcal{C}_e$, by the uniqueness condition on $\mathcal{E}_e$. We will first prove that no pair of words in $\mathcal{C}_e$ represent the same conjugacy class, and then prove that every word in $\mathcal{E}_e$ is conjugate to a word in $\mathcal{C}_e$ with at most the same length. Then since every group element is represented in $\mathcal{E}_e$, every conjugacy class is represented (uniquely) in $\mathcal{C}_e$. Furthermore, this unique representative has length at most that of each of the corresponding (element-minimal) representatives in $\mathcal{E}_e$. This proves the proposition.

Proposition \ref{prop:evenelemreps} implies that no two words in $\mathcal{C}_e$ represent equal elements. We show that no two words represent conjugate group elements either. Suppose, on the contrary, that $w,v\in\mathcal{C}_e$ represent conjugate elements. So there exists a non-zero integer $m$ such that $t^mwt^{-m}=_G v$. First suppose that $w=a^n$ for $|n|\leq r+1$. Then $t^ma^nt^{m-1}$ is a word in either \eqref{even2} (with $x_0=0$) or \eqref{even3}, depending on the sign of $m$, and thus by the above observation the word $v\notin\mathcal{C}_e$, which is a contradiction. Now suppose that $w=a^{x_0}ta^{x_1}t\cdots a^{x_d}t^{-d}$ for $d\geq 1$, $x_0\neq0$, with conditions $A$ and $B$. So $v=t^mwt^{-m}=t^ma^{x_0}ta^{x_1}t\cdots a^{x_d}t^{-d-m}$. If $m>0$, $v$ is a word in \eqref{even2} (and not in $\mathcal{C}_e$). If $m<0$, $v$ is a word in either \eqref{even3} or \eqref{even4}. In both cases, we have $v\notin\mathcal{C}_e$, which is again a contradiction.

Now let $w\in\mathcal{E}_e$. We claim that there exists $v\in\mathcal{C}_e$ such that $w$ and $v$ represent conjugate group elements, and moreover $|w|\geq|v|$ (as words). We assume that $w\notin\mathcal{C}_e$ (otherwise the claim is trivial).

There are two exceptional cases. First, suppose $w=t^{-d}a^{x_0}ta^{x_1}\cdots a^{x_{d-2}}ta^{\pm(r-1)}ta^{\mp1}$ with $d\geq1$ and conditions $A$ (so in particular $w$ is in the form \eqref{even4}). Then $\overline{w}$ is conjugate to the element represented by $a^{x_0}ta^{x_1}\cdots a^{x_{d-1}}ta^{\pm(r-1)}ta^{\mp1}t^{-d}$. This word contains a forbidden subword and therefore does not satisfy condition $B$, so is not in $\mathcal{C}_e$. However, it represents the same element as $v:=a^{x_0}ta^{x_1}\cdots a^{x_{d-1}}ta^{\mp(r+1)}t^{-d+1}\in\mathcal{C}_e$. We also have $|w|=\sum_{i=0}^{d-2}x_i + (r-1)+1+2d>\sum_{i=0}^{d-2}x_i+(r+1)+2(d-1)=|v|.$

For the second exceptional case, suppose $w=t^{-d}a^{x_0}ta^{x_1}\cdots a^{x_{d-3}}ta^{\pm r}ta^{\pm(r-2)}ta^{\mp1}.$ Then $\overline{w}$ is conjugate to the element represented by $a^{x_0}ta^{x_1}\cdots a^{x_{d-3}}ta^{\pm r}ta^{\pm(r-2)}ta^{\mp1}t^{-d}$, which contains a forbidden subword, but represents the same element as $v:=a^{x_0}ta^{x_1}\cdots a^{x_{d-3}}ta^{\mp r}ta^{\mp(r+1)}t^{-d+1}\in\mathcal{C}_e$. In this case we have \[|w|=\sum_{i=0}^{d-3}x_i + r+(r-2)+1+2d=\sum_{i=0}^{d-3}x_i+r+(r+1)+2(d-1)=|v|.\]

For the general case, where $w$ is in the form \eqref{even2}, \eqref{even3}, or \eqref{even4} (excluding the exceptional cases) and is not already an element of $\mathcal{C}_e$, it is clear that conjugation by $t^{\pm1}$ an appropriate number of times takes $w$ to a word in $\mathcal{C}_e$, which has at most the same length as $w$.
\end{proof}

\begin{proposition}\label{prop:abelianeven} Let $k=2r$, $r\geq 2$.
\begin{enumerate}
		\item\label{two1}
In $G=BS(1,k)$, the set $\mathcal{C}_e$ is an unambiguous context-free language.
\item\label{two2}The subgroup $\Z_k$ has rational conjugacy growth.
\end{enumerate}
\end{proposition}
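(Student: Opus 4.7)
My approach is to mirror the proof of Proposition \ref{prop:abelianodd}: construct an unambiguous context-free grammar for $\mathcal{C}_e$, and then apply the DSV method. The basic skeleton of the grammar can be copied from the odd case --- a start symbol $S$ branching to either $\epsilon$, a short $a$-power (captured by an $A$-variable), or a main form $T$, with $T$ producing words via nested $tUt^{-1}$ constructions --- but the state-tracking non-terminals must be enriched to enforce the stronger conditions $A$ and $B$ of the even case.

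To enforce condition $A$, the post-$\pm r$ state variables $V$ and $W$ need more restrictive productions than in the odd case: after $x_{i-1}=+r$ and for $i<d$, the next exponent must lie strictly in $\{1,\dots,r-1\}$, so $x_i=0$ is excluded. In particular, $V$ cannot derive the ``empty middle'' production $tUt^{-1}$; instead it must immediately emit a nonzero $a$-block from $\{a,\dots,a^{r-1}\}$ before the next $t$, and symmetrically for $W$. To enforce condition $B$, I would introduce separate ``end-ready'' non-terminals that remember the last one or two exponents, so that when the word terminates with $a^{x_d}t^{-d}$ the grammar can explicitly exclude the forbidden tails $(x_{d-1},x_d)=(\pm(r-1),\mp 1)$ and $(x_{d-2},x_{d-1},x_d)=(\pm r,\pm(r-2),\mp 1)$. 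Concretely, one adds variables representing ``just about to terminate from state $x_{d-1}=\pm(r-1)$'' and ``just about to terminate from state $(x_{d-2},x_{d-1})=(\pm r,\pm(r-2))$,'' whose final productions enumerate the allowed $a^{x_d}$ explicitly.

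For part (2), DSV converts the grammar into a system of equations. Because every production rule has the form $V\rightarrow P_1V_1 \mid P_2V_2 \mid \cdots \mid P_nV_n \mid Q$, where each $P_i$ and $Q$ is a finite collection of terminal strings (hence a polynomial in $z$ after DSV) and each $V_j$ is a single non-terminal, the resulting system is linear in the generating functions $S(z),T(z),U(z),V(z),W(z),\dots$ with coefficients in $\Q[z]$. Solving such a linear system by Gaussian elimination yields $S(z)\in\Q(z)$, establishing rationality; in particular the series is not merely algebraic (as Theorem \ref{CS} would guarantee), but rational.

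The main obstacle is the combinatorial bookkeeping for condition $B$: because the forbidden patterns involve the last two exponents, the ``end-ready'' variables must be split into several cases according to $(x_{d-2},x_{d-1})$, and one must verify both that the grammar generates exactly $\mathcal{C}_e$ and that each word has a unique derivation. Ambiguity is the real danger, since a word could potentially end via the ordinary end-state or via a specialized end-state that happens to apply, so the specialized end-state variables must be used only when triggered by the appropriate preceding context, and the productions of $V,W$ and of the middle-state variables must branch cleanly based on whether we are about to terminate.
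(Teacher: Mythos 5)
Your proposal is correct and follows essentially the same route as the paper, which builds exactly such an unambiguous grammar --- post-$a^{\pm r}$ state variables $V,W$, plus end-restricted variables $X,Y$ entered after $a^{\pm(r-1)}t$ or $a^{\pm r}ta^{\pm(r-2)}t$ whose terminal productions omit the forbidden final exponent $a^{\mp1}$ --- and then applies the DSV method. Your a priori observation that the DSV system is linear (at most one non-terminal per alternative, once the terminal-only variable is inlined as a polynomial), so that Gaussian elimination immediately yields a rational $S(z)$, is a cleaner justification of rationality than the paper's explicit solve, but it is not a different method.
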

\begin{proof}
(1) We claim that the following grammar, with $S$ as the starting point, generates $\mathcal{C}_e$ unambiguously. %Capital letters represent variables, and lower case represent terminals.
\begin{align*}
S&\rightarrow\epsilon\mid A\mid T, \ \ A\rightarrow a^{-(r+1)}\mid a^{-r}\mid\cdots\mid a^{-1}\mid a\mid\cdots\mid a^{r+1}\\
T&\rightarrow BtUt^{-1}\mid a^rtVt^{-1} \mid a^{-r}tWt^{-1}\mid a^{r-1}tXt^{-1}\mid a^{-(r-1)}tYt^{-1}\\
B&\rightarrow a^{-(r-2)}\mid a^{-(r-3)}\mid \cdots \mid a^{-1}\mid a\mid \cdots\mid a^{r-2}, \ \ U\rightarrow tUt^{-1}\mid T\\
V&\rightarrow a\mid a^2\mid \cdots\mid a^{r-1}\mid tUt^{-1}\mid atUt^{-1}\mid \cdots\mid a^{r-3}tUt^{-1}\mid a^{r-2}tXt^{-1}\mid a^{r-1}tXt^{-1}\\
W&\rightarrow a^{-1}\mid a^{-2}\mid\cdots\mid a^{-(r-1)}\mid tUt^{-1}\mid a^{-1}tUt^{-1}\mid\cdots\mid a^{-(r-3)}tUt^{-1}\mid a^{-(r-2)}tYt^{-1}\mid a^{-(r-1)}tYt^{-1}\\
X&\rightarrow a^{-(r+1)}\mid a^{-r}\mid\cdots\mid a^{-2}\mid a\mid a^2\mid\cdots\mid a^{r+1}\mid U\\
Y&\rightarrow a^{-(r+1)}\mid a^{-r}\mid\cdots\mid a^{-2}\mid a^{-1}\mid a^2\mid\cdots\mid a^{r+1}\mid U
\end{align*}
Starting from $S$, this grammar produces words in $\mathcal{C}_e$ by choosing the values of the powers $x_i$ from left to right, while keeping track of the number $d$ of such powers. If $x_i$ is chosen to be $\pm r$ or $\pm(r-1)$, restrictions apply to the following power. 
%\end{proof}
%
%\begin{proposition}\label{prop:abelianeven}
%The subgroup $\Z_k$, $k=2r$, has rational conjugacy growth.
%\end{proposition}
%\begin{proof}

(2) We use the grammar above to explicitly calculate the growth function. The grammar yields the following system of equations.
\begin{align*}
S(z)&=1+A(z)+T(z), \ \ A(z)=2\sum_{i=1}^{r+1}z^i = \frac{2(z-z^{r+2})}{1-z},\\
T(z)&=t^2B(z)U(z) + 2z^{r+2}V(z) + 2z^{r+1}X(z),\\
B(z)&=2\sum_{i=1}^{r-2}z^i = \frac{2(z-z^{r-1})}{1-z}, \ \ U(z)= z^2U(z) + T(z),\\
V(z)=W(z)&=\sum_{i=1}^{r-1}z^i + z^2U(z)\sum_{i=0}^{r-3}z^i + z^2X(z)(z^{r-2}+z^{r-1})\\
&=\frac{z-z^r}{1-z} + z^2U(z)\frac{1-z^{r-2}}{1-z} + z^2X(z)(z^{r-2}+z^{r-1}),\\
X(z)=Y(z)&=2\sum_{i=1}^{r+1}z^i-z+U(z)=\frac{2(z-z^{r+2})}{1-z}-U(z).
\end{align*}
Solving these for $S(z)$ yields the following rational expression: % \opinion{The numerator of $S(z)$ needs to be factored out, and $z-1$ is a factor.}
{\footnotesize
\begin{align}\label{abelianserieseven}
S(z)=\frac{n(z)}{d(z)}&=\frac{-1-2z^{r+2}+2z^3+z^4+2z^2-4z^{3r+6}+4z^{3r+8}-2z^{2r+8}+4z^{3r+4}-4z^{r+6}}{(2z^{2r+4}-2z^{r+4}-z^3+2z^{r+2}-z^2-z+1)(z-1)}\\
\nonumber&+\frac{4z^{6+2r}-2z^{2r+7}+2z^{2r+2}+2z^{r+5}-6z^{2r+4}-6z^{r+3}+6z^{r+4}}{(2z^{2r+4}-2z^{r+4}-z^3+2z^{r+2}-z^2-z+1)(z-1)}.
%\\
%&\frac{-1+2z^2+2z^3+z^4-2z^{r+2}-6z^{r+3}+6z^{r+4}+2z^{r+5}-4z^{r+6}+2z^{2r+2}-6z^{2r+4}+4z^{6+2r}-2z^{2r+7}-2z^{2r+8}+4z^{3r+4}-4z^{3r+6}+4z^{3r+8}}{b}
\end{align}}
That is, the denominator of $S(z)$ is $d(z)=(2z^{2r+4}-2z^{r+4}-z^3+2z^{r+2}-z^2-z+1)(z-1)$ and the numerator $n(z)=-1+2z^2+2z^3+z^4-2z^{r+2}-6z^{r+3}+6z^{r+4}+2z^{r+5}-4z^{r+6}
+2z^{2r+2}-6z^{2r+4}+4z^{6+2r}-2z^{2r+7}-2z^{2r+8}+4z^{3r+4}-4z^{3r+6}+4z^{3r+8}$.
\end{proof}

\begin{corollary}\label{cor:evenabelian}
The conjugacy classes in $\Z_k$ have growth rate in the range $(\frac{4}{3},2)$. 
\end{corollary}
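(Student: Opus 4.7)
My strategy mirrors that of Corollary \ref{cor:oddabelian}. The denominator in \eqref{abelianserieseven} factors as $d(z) = (z-1)\,p(z)$ with $p(z) = 2z^{2r+4} - 2z^{r+4} + 2z^{r+2} - z^3 - z^2 - z + 1$, and I aim to locate the smallest positive real root of $d(z)$ and verify it survives as a pole of $S(z)$.

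First I would write $p(z) = (1 - z - z^2 - z^3) + 2z^{r+2}(1 - z^2 + z^{r+2})$. Both summands are non-negative on $[0, 1/2]$: the first is strictly decreasing with value $1/8$ at $z = 1/2$, and the second is non-negative since $1 - z^2 > 0$ there. Hence $p(z) \geq 1/8 > 0$ on $[0, 1/2]$. Next, evaluate
\[
p(3/4) = -\tfrac{47}{64} + 2(3/4)^{r+2}\!\left(\tfrac{7}{16} + (3/4)^{r+2}\right);
\]
the correction term is decreasing in $r$, so it suffices to check $p(3/4) < 0$ at $r = 2$, which is immediate by direct computation. The intermediate value theorem then produces a root $\rho_e \in (1/2, 3/4)$ of $p(z)$, and since $(z-1) < 0$ on $(0,1)$, this $\rho_e$ is the smallest positive real root of $d(z)$.

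The main obstacle is verifying that $\rho_e$ is not cancelled by the numerator $n(z)$ of \eqref{abelianserieseven}. Direct evaluation shows $p(1) = n(1) = n'(1) = 0$ together with $p'(1) = 4r - 2 > 0$, so $d$ has a double root at $z = 1$ that cancels entirely against $n$, confirming $z = 1$ is not a pole of $S(z)$. To handle $\rho_e$, I would apply the relation
\[
2\rho_e^{2r+4} = 2\rho_e^{r+4} - 2\rho_e^{r+2} + \rho_e^3 + \rho_e^2 + \rho_e - 1
\]
coming from $p(\rho_e) = 0$, iteratively, to reduce the high-degree monomials $\rho_e^{3r+8}, \rho_e^{3r+6}, \rho_e^{3r+4}, \rho_e^{2r+8}, \rho_e^{2r+7}, \rho_e^{2r+6}, \rho_e^{2r+4}, \rho_e^{2r+2}$ appearing in $n(\rho_e)$. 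After finitely many substitutions the result is a polynomial in $\rho_e$ of degree bounded independently of $r$, and checking that this residual polynomial has no zero in $(1/2, 3/4)$ rules out cancellation; this is the main computational step, analogous to the identity $a^7-2a^5-a^4+a^3+2a^2-1$ extracted in the odd case.

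Once non-cancellation is established, $\rho_e$ is the smallest positive singularity of the meromorphic continuation of $S(z)$, so by Pringsheim's theorem the growth rate of conjugacy classes in $\Z_k$ equals $1/\rho_e$ and lies in $(4/3, 2)$.
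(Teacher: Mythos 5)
Your overall strategy is the same as the paper's: isolate the factor $p(z)=2z^{2r+4}-2z^{r+4}+2z^{r+2}-z^3-z^2-z+1$ of the denominator, locate its smallest positive root $\rho_e\in(\frac12,\frac34)$, show the numerator does not vanish there, and conclude the growth rate is $1/\rho_e$. Your root-location step is fine and in places cleaner than the paper's (the uniform bound $p(z)\ge\frac18$ on $[0,\frac12]$, and the appeal to Pringsheim's theorem to dispense with negative and complex singularities, where the paper instead checks the intervals $[-\frac34,-\frac12]$ and $[-\frac12,0]$ by hand).

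The genuine gap is the non-cancellation step, which you describe but do not carry out. This is the heart of the proof: a priori the numerator $n(z)$ of \eqref{abelianserieseven} could vanish at $\rho_e$, and nothing in your argument rules that out until the reduction is actually performed and its output sign-analysed. Moreover, your description of what the reduction produces is not accurate: substituting $2\rho_e^{2r+4}=2\rho_e^{r+4}-2\rho_e^{r+2}+\rho_e^3+\rho_e^2+\rho_e-1$ into the monomials $\rho_e^{3r+8},\dots,\rho_e^{2r+2}$ does \emph{not} leave a polynomial of degree bounded independently of $r$. The paper's computation yields
\[
n(a) \;=\; 2a^r(a+1)(a-1)^2\bigl((a^3-a-1)a^{r+2}+a^4+a^2-1\bigr),
\]
which still contains the $r$-dependent powers $a^r$ and $a^{r+2}$; what saves the argument is not a fixed residual polynomial but a sign analysis uniform in $r$: for $a\in(\frac12,\frac34)$ one has $a^3-a-1<0$ and $a^4+a^2-1<0$, so the last factor is strictly negative for every $r$, while the other factors are nonzero since $a\neq 0,\pm1$. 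Without exhibiting this (or an equivalent) closed form and its sign, the proof is incomplete at precisely the point where it could fail. (Your side remark about the double root of $d$ at $z=1$ is harmless but unnecessary: since $\rho_e<1$, the behaviour at $z=1$ cannot affect the radius of convergence.)
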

\begin{proof}
For $z\in [-\frac12,0]$, $d(z)=2z^{2r+4}-2z^{r+4}-z^3+2z^{r+2}-z^2-z+1 = (1-z) -z^2(1-z^{2r+2})-z^3(1-z^{2r+1})+2z^{r+2}(1-z^2) \ge 1-\frac14-\frac1{16}> 0$, so there is no root in $[-\frac12,0]$. Similarly, for $z\in [-\frac34,-\frac12]$, we have that $d(z) = (1-z) -z^2(1-z^{2r+2})-z^3(1-z^{2r+1})+2z^{r+2}(1-z^2) \ge \frac32-\frac49-2(\frac34)^4> 0$, so there is no root in $[-\frac34,-\frac12]$.

We also have $d(\frac{1}{2})=\frac{1}{8}+\frac{1}{2^{2r+3}}-\frac{1}{2^{r+3}}+\frac{1}{2^{r+1}}>0$ and $d(\frac{3}{4})<0$. So there is a root $\in (\frac{1}{2}, \frac{3}{4})$ of $d$. Furthermore, $d(0)=1$ and $d'(z)<0$ for $z\in[0,\frac{1}{2}]$, so the real root with smallest absolute value lies in $(\frac{1}{2}, \frac{3}{4})$. 

Write $a$ to be the real root with smallest absolute value of $d(z)$. The fact that $a$ is a root of the denominator gives $2a^{2r+4}-2a^{r+4}+2a^{r+2}-a^3-a^2-a+1= 0$. In particular, $2a^{2r+4}= 2a^{r+4}-2a^{r+2}+a^3+a^2+a-1= 0$ and $a^3+a^2+a-1= 2a^{2r+4}-2a^{r+4}+2a^{r+2}$. Using these identities we get that 
{\footnotesize
\begin{align*}
n(a)&=-1+2a^2+2a^3+a^4-2a^{r+2}-6a^{r+3}+6a^{r+4}+2a^{r+5}-4a^{r+6+2a^{2r+2}}\\
&-6a^{2r+4}+4a^{6+2r}-2a^{2r+7}-2a^{2r+8}+4a^{3r+4}-4a^{3r+6}+4a^{3r+8}\\
= &(a+1)(a^3+a^2+a-1)-2a^{r+2}-6a^{r+3}+6a^{r+4}+2a^{r+5}-4a^{r+6}+2a^{2r+2}\\
&-6a^{2r+4}+4a^{6+2r}-2a^{2r+7}-2a^{2r+8}+2a^{2r+4}(2a^r-2a^{r+2}+2a^{r+4})\\
= & (a+1)(2a^{2r+4}-2a^{r+4}+2a^{r+2})-2a^{r+2}-6a^{r+3}+6a^{r+4}+2a^{r+5}-4a^{r+6}+2a^{2r+2}\\
&-6a^{2r+4}+4a^{6+2r}-2a^{2r+7}-2a^{2r+8}+(2a^{r+4}-2a^{r+2}+a^3+a^2+a-1)(2a^r-2a^{r+2}+2a^{r+4})\\
= & 2a^r(a+1)(a-1)^2((a^3-a-1)a^{r+2}+a^4+a^2-1)
\end{align*}}
However, $a\in (\frac12,\frac34)$ implies $a^3-a-1 < 0$ and $a^4+a^2-1 < 0$, so $((a^3-a-1)a^{r+2}+a^4+a^2-1) < 0$. Also $a \neq -1,0,1$, so $a$ is not a root of the numerator of $S(z)$ in (\ref{abelianserieseven}), and thus the growth rate, which is the reciprocal of $a$, lies in the given range.

%\opinion{Technically the real root with smallest absolute value could be negative. (Although it is not.) We also need to show that $\rho_e$ is not a root of the numerator of $S(z)$ in (\ref{abelianserieseven}).}
\end{proof}

\subsection{The case $k=2$}
Let $G=BS(1,2)$.

\begin{proposition}
	In $BS(1,2)$ the set of words
	\begin{equation*}
	\mathcal{C}_2=\{\epsilon,a^{\pm1},a^{\pm3}\}\cup\{a^{x_0}ta^{x_1}t\cdots ta^{x_d}t^{-d}\mid d\geq1, |x_d|\in\{2,3\}, x_0\neq0, A\}
	\end{equation*}
	comprises a set of unique geodesic representatives for the conjugacy classes of $G$ that lie in the subgroup $\Z_k$. Here, $A$ signifies the conditions $|x_i|\leq1$ for $i<d$, if $x_{i-1}\neq0$ then $x_i=0$ for $i<d$, if $x_d>0$ then $x_{d-1}\geq0$, and if $x_d<0$ then $x_{d-1}\leq 0$.
\end{proposition}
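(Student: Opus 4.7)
The plan is to follow the template of the two preceding propositions: use the set $\mathcal{E}_2$ from Proposition \ref{prop:2elemreps} together with the inclusion $\mathcal{C}_2\subset\mathcal{E}_2$, and establish (a) that no two distinct words in $\mathcal{C}_2$ represent conjugate elements of $G$, and (b) that every $w\in\mathcal{E}_2$ is conjugate to some $v\in\mathcal{C}_2$ with $|v|\leq|w|$. Since $\mathcal{E}_2$ already supplies unique geodesic representatives for elements of $\Z_k$, these two statements together force $\mathcal{C}_2$ to supply unique length-minimal representatives for the conjugacy classes inside $\Z_k$, which is exactly the content of the proposition.

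For (a), since $\Z_k$ is abelian and normal, any conjugator between two of its elements can be taken to be a power of $t$. Assuming $w,v\in\mathcal{C}_2$ and $t^m w t^{-m}=_G v$ for some nonzero $m$, I would split on the form of $w$. If $w\in\{\epsilon,a^{\pm1},a^{\pm3}\}$, then the literal word $t^m w t^{-m}$ (or its $\mathcal{E}_2$-reduction) lies in form \ref{two-2} with $x_0=0$ for $m>0$, and in form \ref{two3} or \ref{two4} for $m<0$. If instead $w$ is in form \ref{two-2}, then $t^m w t^{-m}$ either extends the existing $t^{-d}$-block (giving form \ref{two-2} with $x_0=0$) or prepends a $t^{-b}$-block (giving form \ref{two3}). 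In every case the $\mathcal{E}_2$-representative falls outside $\mathcal{C}_2$, so uniqueness within $\mathcal{E}_2$ contradicts the assumption $v\in\mathcal{C}_2$.

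For (b), I case-split on the form of $w\in\mathcal{E}_2\setminus\mathcal{C}_2$. If $w=a^{\pm2}$ from \ref{two-1}, the identity $a^t=a^2$ gives $a^{\pm2}\sim a^{\pm1}\in\mathcal{C}_2$, strictly shorter. If $w$ is in form \ref{two-2} with $x_0=0$, letting $i$ be the index of the leftmost nonzero $x_j$, conjugation by $t^{-i}$ drops $2i$ letters and produces a form-\ref{two-2} word whose truncated coefficient sequence still satisfies condition $A$, hence lies in $\mathcal{C}_2$. If $w$ is in form \ref{two3}, conjugation by $t^b$ amalgamates the prefix $t^{-b}$ with the suffix $t^{-c}$, producing a $\mathcal{C}_2$-word of the same length (since $x_0\neq0$ and $|x_d|\in\{2,3\}$ are preserved). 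If $w$ is in form \ref{two4} with $|x_d|\in\{2,3\}$, conjugation by $t^d$ directly yields a $\mathcal{C}_2$-word of the same length.

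The hard part will be the remaining subcase of form \ref{two4} with $|x_d|\leq1$, where the direct conjugate $a^{x_0}t\cdots ta^{x_d}t^{-d}$ is no longer geodesic and is therefore neither in $\mathcal{E}_2$ nor in $\mathcal{C}_2$. Here I plan a short case analysis on the tail pair $(x_{d-1},x_d)$, which the non-adjacency and sign-matching clauses of condition $A$ restrict to a small explicit list of patterns; applying $ta^nt^{-1}=a^{2n}$ absorbs the final $a^{x_d}t^{-1}$ into the preceding block and yields an equivalent strictly shorter word, which is either already in $\mathcal{C}_2$ or can be brought into form \ref{two-2} with $|x_d^{\text{new}}|\in\{2,3\}$ by one further conjugation by a power of $t$. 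The base-$2$ arithmetic makes these verifications finite, but the length-bookkeeping across the competing reductions is the subtlest step of the argument.
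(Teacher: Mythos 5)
Your proposal is correct and follows essentially the same route as the paper: the same reduction to the two claims (no two words of $\mathcal{C}_2$ are conjugate; every word of $\mathcal{E}_2$ is conjugate to a $\mathcal{C}_2$-word of at most the same length) via the uniqueness of $\mathcal{E}_2$, the same case analysis by the forms of Proposition \ref{prop:2elemreps}, and the same treatment of the delicate subcase of form \eqref{two4} ending in $a^{\pm1}$ using $ta^{\pm1}t^{-1}=a^{\pm2}$ to absorb the tail and shorten the word. The only blemish is that for $w=a^{\pm1}$ and $m=1$ the conjugate reduces to $a^{\pm2}$, which lies in form \eqref{two-1} rather than form \eqref{two-2} as you state, but since $a^{\pm2}\notin\mathcal{C}_2$ the contradiction you need still holds.
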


\begin{proof}
	Let $\mathcal{E}_2$ be as in Proposition \ref{prop:2elemreps} and note that $\mathcal{C}\subset\mathcal{E}_2$. As above, we use the following key observation: if an element is represented by a word in $\mathcal{E}_2\setminus\mathcal{C}_2$, then it cannot be represented by a word in $\mathcal{C}_2$, by the uniqueness condition on $\mathcal{E}_2$. We will first prove that no pair of words in $\mathcal{C}_2$ can represent the same conjugacy class, and then prove that every word in $\mathcal{E}_2$ is conjugate to a word in $\mathcal{C}_2$ of at most the same length, proving the proposition.
	
	We show that no pair of words in $\mathcal{C}_2$ represent conjugate elements. Let $w\in\mathcal{C}_2$ and suppose on the contrary that it represents the same conjugacy class as some $v\in\mathcal{C}_2$. Since no pair of words in $\mathcal{E}_2$ represent the same element, there exists $m\neq0$ with $t^mwt^{-m}=_Gv$. First consider the case where $w\in \{a^{x_0}ta^{x_1}t\cdots ta^{x_d}t^{-d}\mid d\geq1, |x_d|\in\{2,3\}, x_0\neq0, A\}$. Then $t^mwt^{-m}$ has the form \eqref{two-2} with $x_0\neq0$, or \eqref{two3}, or \eqref{two4}, which contradicts the key observation. Now consider the case $w=a^{\pm1}$, with $m=1$. Then $twt^{-1}=ta^{\pm1}t^{-1}=_Ga^{\pm2}$, and hence the word $twt^{-1}$ cannot be in $\mathcal{C}_2$ by the uniqueness condition on $\mathcal{E}_2$. In the case $w=a^{\pm1}$, with $m=-1$, we have $t^{-1}wt$ in the form \eqref{two4}, again a contradiction. Next, consider $w=a^{\pm1}$ with $|m|\geq2$. We have $t^ma^{\pm1}t^{-m}=_G t^{m-1}a^{\pm2}t^{-(m-1)}$, which is a word in the form \eqref{two-2} with $x_0\neq0$, or \eqref{two3}, or \eqref{two4}, again contradicting the key observation. Finally consider the case $w=a^{\pm3}$. Then $t^ma^{\pm3}t^{-m}$ is in the form \eqref{two-2} with $x_0\neq0$, or \eqref{two3}, or \eqref{two4}, again contradicting the key observation.
	
	Now let $w\in\mathcal{E}_2$. We show that there exists $v\in\mathcal{C}_2$ such that $w$ and $v$ represent conjugate elements, and that $|w|\geq|v|$. We assume $w\notin\mathcal{C}_2$. Firstly, $a^{\pm2}$ is conjugate to $a^{\pm1}\in\mathcal{C}_2$, which has strictly shorter length. Now suppose $w=t^{-d}a^{x_0}t\cdots a^{x_{d-1}}ta\in\eqref{two4}$, where $x_{d-1}\in\{0,1\}$. Then $w$ is conjugate, via $t^d$, to $a^{x_0}t\cdots ta^{x_{d-1}}tat^d$, which has the same length. This word represents the same element as $a^{x_0}t\cdots ta^{x_{d-1}+2}t^{d-1}\in\mathcal{C}_2$ which has strictly smaller length. Similarly, if $w=t^{-d}a^{x_0}t\cdots a^{x_{d-1}}ta^{-1}\in\eqref{two4}$, we must have $x_{d-1}\in\{-1,0\}$, and $w$ represents the same conjugacy class as the shorter word $a^{x_0}t\cdots ta^{x_{d-1}-2}t^{d-1}\in\mathcal{C}_2$. In all other cases, $w$ is clearly conjugate, via an appropriate number of $t^{\pm1}$s, to a word in $\mathcal{C}_2$ of equal or shorter length.
\end{proof}

\begin{proposition}\label{prop:abelian2}
	The subgroup $\Z_k$ in $BS(1,2)$ has rational conjugacy growth.	
\end{proposition}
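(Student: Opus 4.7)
The plan is to follow the template established in Propositions \ref{prop:abelianodd} and \ref{prop:abelianeven}: exhibit an unambiguous context-free grammar for $\mathcal{C}_2$, and then apply the DSV method to obtain an explicit rational expression for the generating function $S(z)$, which by the preceding proposition equals the relative conjugacy growth series of $\Z_k$ in $BS(1,2)$.

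First I would design the grammar. Since all the constraints defining $\mathcal{C}_2$ are local (they couple only consecutive pairs $x_{i-1}, x_i$, together with a sign-compatibility condition between $x_{d-1}$ and $x_d$), a small finite set of variables tracking the sign of the most recent $x_i$ suffices. The starting variable $S$ would branch into the short cases $\epsilon$, $a^{\pm 1}$, $a^{\pm 3}$, and a variable $T$ that begins the long form by emitting $a^{x_0}t$ with $x_0\in\{-1,1\}$. Because a nonzero inner power must be followed by a zero one, every inner step of the recursion amounts to emitting either $t$ (when the previous $x_i$ was zero) or $a^{\pm 1}t$ immediately followed by a forced $t$ (when the previous $x_i$ was $\pm 1$). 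Three separate tail variables $V^+, V^0, V^-$ would produce the final block $a^{x_d}t^{-1}$ with $|x_d|\in\{2,3\}$, selecting the allowed signs for $x_d$ according to whether $x_{d-1}$ is positive, zero, or negative. All recursion is wrapped in balanced $t\cdots t^{-1}$ pairs so that the $t$-exponent sum stays zero (and hence the language is genuinely context-free rather than regular).

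Once the grammar is in place, the DSV method converts it into a system of equations in the generating functions $S(z), T(z), U(z), V^\pm(z), V^0(z)$, replacing terminals by $z$, the empty word by $1$, and alternation by summation. Because each production has at most one variable on the right, the resulting system is linear in the unknown functions and can be solved by elementary substitution to yield a rational expression for $S(z)$. This proves rationality, strengthening the algebraicity that would follow from Theorem \ref{CS} alone.

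The main obstacle will be ensuring that the grammar is simultaneously \emph{complete} (every word of $\mathcal{C}_2$ is generated) and \emph{unambiguous} (each word has a unique derivation), while properly enforcing all clauses of $A$ together with $x_0\neq 0$ and $|x_d|\in\{2,3\}$. In particular, the branching at the tail variables $V^\pm, V^0$ must be mutually exclusive based on the state carried over from the previous block, and the finite-case productions of $S$ must not overlap with the productions of $T$. As in the odd and even cases, this is careful bookkeeping rather than a deep difficulty, but it is the step where errors are easiest to make.
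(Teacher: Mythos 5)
Your proposal follows essentially the same route as the paper's proof: exhibit an unambiguous context-free grammar for $\mathcal{C}_2$ (the paper uses variables $S$, $A$, $T$, $U$, hard-coding the sign-compatibility of the final block $a^{\pm2},a^{\pm3}$ into the productions of $T$ and letting $U$ handle the case where the preceding exponent is zero), then apply the DSV method to the resulting linear system to obtain the explicit rational expression for $S(z)$. Your plan correctly identifies the structural features that make this work (locality of the constraints, the forced zero exponent after a nonzero one, the sign-dependent tail), so it is the same argument, stopping just short of writing out the grammar and solving the equations.
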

\begin{proof}
%We first show that the set $\mathcal{C}_2$ is an unambiguous context-free language. 
It is straightforward to see that the following grammar, starting from $S$, produces $\mathcal{C}_2$ unambiguously.
	\begin{align*}
	S&\rightarrow\epsilon\mid A\mid T, \ \ A\rightarrow a^{-3}\mid a^{-1}\mid a\mid a^3,\\
	T&\rightarrow at^2Ut^{-2}\mid a^{-1}t^2Ut^{-2}\mid ata^2t^{-1}\mid ata^3t^{-1}\mid a^{-1}ta^2t^{-1}\mid a^{-1}ta^3t^{-1},\\
	U&\rightarrow tUt^{-1}\mid T\mid a^{-3}\mid a^{-2}\mid a^2\mid a^3
	\end{align*}
 %We now use the grammar given above to explicitly calculate the growth series. 
 The grammar becomes the following system of equations.
	\begin{align*}
	S(z)&=1+A(z)+T(z), \ \ A(z)=2(z+z^3),\\
	T(z)&=2z^5U(z) + 2z^5 + 2z^6, \ \ U(z)=z^2U(z) + T(z) + 2(z^2+z^3).
	\end{align*}
	Solving these yields the following rational expression:
	\begin{equation}\label{abelianseriestwo}
	S(z)=\frac{1+2z-z^2-2z^5-2z^6+2z^7-2z^8}{1-z^2-2z^5}.
	\end{equation}
\end{proof}

\begin{corollary}\label{cor:2abelian}
	The conjugacy classes in $\Z_k$ have growth rate approximately $1.348$.
\end{corollary}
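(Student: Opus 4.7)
The plan is to follow the template of Corollaries \ref{cor:oddabelian} and \ref{cor:evenabelian}. Write $d(z) = 1 - z^2 - 2z^5$ and $n(z) = 1 + 2z - z^2 - 2z^5 - 2z^6 + 2z^7 - 2z^8$ for the denominator and numerator of the series in (\ref{abelianseriestwo}). The growth rate of the conjugacy classes in $\Z_k$ (with $k=2$) is the reciprocal of the modulus of the dominant singularity of $S(z)$, which will be the smallest positive real root of $d(z)$ once we check it does not cancel with $n(z)$.

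First I would locate that root. Since $d'(z) = -2z - 10z^4 < 0$ on $(0,1)$, $d$ is strictly decreasing there, with $d(0) = 1$ and $d(1) = -2$, so there is a unique real root $\rho \in (0, 1)$. Refining with $d(7/10) > 0$ and $d(3/4) = -19/512 < 0$ gives $\rho \in (7/10, 3/4)$, and a short numerical bisection pins down $\rho \approx 0.7419$, so $1/\rho \approx 1.348$. To rule out smaller-modulus complex roots I would note that $d(z) > 0$ on $[-1, 0]$ (since $-z^2 \geq -1$ and $-2z^5 \geq 0$ there), and then invoke Pringsheim's theorem: because the coefficients of $S(z)$ are non-negative integers, the dominant singularity lies on the positive real axis, so $\rho$ is the radius of convergence provided it is a genuine pole of $S$.

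Next I would verify $n(\rho) \neq 0$. Using the identity $2\rho^5 = 1 - \rho^2$ coming from $d(\rho) = 0$ to eliminate $-\rho^2$ in $n(\rho)$, the $1$ and $2\rho^5$ terms cancel in pairs and one is left with $n(\rho) = 2\rho(1 - \rho^5 + \rho^6 - \rho^7)$. Substituting $\rho^5 = (1-\rho^2)/2$ once more and using $(1+\rho)(1-\rho+\rho^2) = 1 + \rho^3$ reduces this to $n(\rho) = \rho(1 + \rho - \rho^3 + \rho^4)$. Writing the parenthesized factor as $1 + \rho - \rho^3(1-\rho)$, and noting that $\rho^3(1-\rho) < 1 < 1 + \rho$ for $\rho \in (0, 1)$, shows it is strictly positive. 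Hence $\rho$ is an honest pole of $S(z)$, and the growth rate equals $1/\rho \approx 1.348$.

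I do not expect any serious obstacle: the argument is strictly parallel to the odd and even cases. The only mildly delicate step is the algebraic reduction of $n(\rho)$ modulo $d(\rho)=0$, which is entirely in the spirit of the corresponding computations earlier in this section.
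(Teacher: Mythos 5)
Your proposal is correct and follows essentially the same route as the paper: identify the unique positive real root $\rho\approx 0.742$ of the denominator $1-z^2-2z^5$, reduce the numerator modulo the relation $2\rho^5=1-\rho^2$, and observe that the result is positive, so the growth rate is $1/\rho\approx 1.348$. Your reduced form $\rho+\rho^2-\rho^4+\rho^5$ agrees with a direct computation (the paper's stated $a+a^2-a^4+a^6$ appears to have a harmless typo in the last exponent), and your extra appeal to Pringsheim's theorem to rule out smaller complex singularities is a minor strengthening of the same argument.
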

\begin{proof}
	The only real root of the polynomial $1-z^2-2z^5$, the denominator of (\ref{abelianseriestwo}), is approximately $0.742$. Denote this root by $a$, so that $1-a^2-2a^5=0$. Using this identity, we find that the numerator of \ref{abelianseriestwo} is equal to $a+a^2-a^4+a^6$ when $z=a$. Since $a^4<a^2$, we see that $a$ is not a root of the numerator. Therefore the growth rate is the reciprocal of $a$, approximately $1.348$.
\end{proof}
%%%%%%%%%%%%%%%%%%%%%%%%%%%%%%%%%%%%%%%%%%%%%%%%%%%%%%%%%%%%%%%%%%%%%%%%%%%%%%%%%%%%%%%%%%%%%%%%%

\section{The conjugacy classes $[(x,m)]$, $m\neq 0$, in $BS(1,k)$}\label{sec:general}

In this section we find and describe a set of minimal representatives for the conjugacy classes of the form $(x,m)$ with $m\neq 0$. 
\subsection{The conjugacy geodesics}

We first need the following result by Collins-Edjvet-Gill, which although stated for $k$ even, also holds for $k$ odd.

\begin{lemma}{\cite[Lemma 2.2]{CEG}}\label{geo-lemma}
Let $w$ be a geodesic word. Then:
\begin{enumerate}
\item If $w$ has a subword of the form $t^{-r}a^{i_0}ta^{i_1}t\cdots ta^{i_n}t^{-s}$, where $i_0, i_n \neq 0$, $r,s,n \ge 1$, then $r+s \le n$.
\item If $w$ has a subword of the form $t^{r}a^{i_0}t^{-1}a^{i_1}t^{-1}\cdots t^{-1}a^{i_n}t^{s}$, where $i_0, i_n \neq 0$, $r,s,n \ge 1$, then $r+s \le n$.
\item $w$ has at most one subword of the form $t^{-1}a^{i}t$ where $i \ne 0$, and at most one subword of the form $ta^it^{-1}$ where $i \ne 0$.
\end{enumerate}
\end{lemma}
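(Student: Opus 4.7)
The plan is to prove each of the three statements by contradiction: assume $w$ is geodesic yet contains the forbidden subword, then show the subword itself is not geodesic, so replacing it by its geodesic equivalent gives a strictly shorter word for the element represented by $w$, contradicting the geodesicity of $w$. All rewriting uses the relator in the forms $at^{-1}=t^{-1}a^k$ and $ta=a^k t$, together with the semidirect product rule $(x,p)(y,q)=(x+k^p y,\,p+q)$ recalled at the start of Section~\ref{sec:prelim}.

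For part (1), fix $u=t^{-r}a^{i_0}ta^{i_1}\cdots ta^{i_n}t^{-s}$ with $r+s>n$ and $i_0,i_n\ne 0$. A direct induction in the semidirect product shows $u$ represents $(k^{-r}J,\,n-r-s)$ with $J=\sum_{j=0}^n k^j i_j$, so its $t$-exponent is $-m$ where $m:=r+s-n>0$. I would produce a strictly shorter equivalent by taking the geodesic form of this element, the $m\ne 0$ analogue of the normal forms of Propositions~\ref{prop:basegrpelemreps}, \ref{prop:evenelemreps} and~\ref{prop:2elemreps}: a word $t^{-b}a^{x_0}ta^{x_1}\cdots ta^{x_d}t^{-c}$ with $b+c-d=m$, coefficients $|x_i|\le r+1$ obtained from a balanced base-$k$ expansion of $k^{-r}J$, and $d$ at most the $k$-adic depth of $k^{-r}J$. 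A letter-count comparison — the new word has length $m+2d+\sum|x_i|$ while $|u|=2(r+s)-m+\sum|i_j|$ — reduces the task to the two inequalities $\sum|x_i|\le\sum|i_j|+O(1)$ and $d<n$, after which the saving of $2(n-d)$ letters in the $t$-budget is decisive.

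Part (2) reduces to part (1) by word inversion: if $u=t^ra^{i_0}t^{-1}\cdots t^{-1}a^{i_n}t^s$ then $u^{-1}=t^{-s}a^{-i_n}t\cdots ta^{-i_0}t^{-r}$ is a subword of part (1) type with $r$ and $s$ swapped, and $u$ is non-geodesic iff $u^{-1}$ is. For part (3), suppose $w=\alpha\cdot t^{-1}a^{j_1}t\cdot\beta\cdot t^{-1}a^{j_2}t\cdot\gamma$ with $j_1,j_2\ne 0$. Since $w$ contains no $tt^{-1}$ subword, $\beta$ is non-empty and does not begin with $t^{-1}$; if $\beta=a^\ell$ consists of $a$-letters only, the relator collapses $t^{-1}a^{j_1}ta^\ell t^{-1}a^{j_2}t$ to $t^{-1}a^{j_1+k\ell+j_2}t$, which — after a further reduction $t^{-1}a^{kN}t=a^N$ whenever the exponent is divisible by $k$ — is strictly shorter. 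If $\beta$ contains $t^{\pm1}$-letters, an induction on their number, invoking parts (1) and (2) on the enlarged subword $t^{-1}a^{j_1}t\beta t^{-1}a^{j_2}t$, reduces to the preceding case. The $ta^it^{-1}$ statement follows by the same inversion used in part (2).

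The principal obstacle is showing that the geodesic form of $u$'s element is genuinely shorter than $|u|$ whenever $r+s>n$: the two key inequalities $\sum|x_i|\le\sum|i_j|+O(1)$ and $d<n$ are both plausible — the former from the bounded-digit property of the balanced base-$k$ expansion of $J$ together with the explicit bounds $|x_i|\le r+1$ built into the normal forms, and the latter because the $k$-adic depth of $k^{-r}J$ is controlled by the valuation of $J$ shifted by $r$ — but making these comparisons sharp is the crux of the argument, since naive rewrites that push $a$-blocks across $t^{\pm1}$-letters multiply exponents by $k$ and risk lengthening the word rather than shortening it.
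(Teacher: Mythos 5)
This lemma is not proved in the paper at all: it is quoted verbatim from Collins--Edjvet--Gill \cite[Lemma 2.2]{CEG}, so there is no internal proof to compare your argument against. Judged on its own merits, your proposal has a reasonable global strategy (show the offending subword is itself non-geodesic by exhibiting a shorter equivalent word), but it fails at exactly the point you yourself flag as the crux, and in one place the rewrite you propose goes in the wrong direction.

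For part (1) you reduce everything to the two inequalities $\sum|x_i|\le\sum|i_j|+O(1)$ and $d<n$ and then concede that making them sharp ``is the crux of the argument''. That concession is fatal: the $t$-letter saving you are banking on is $2(n-d)$, which can be as small as $2$, so an unquantified $O(1)$ additive loss in $a$-letters cannot be absorbed and no strict inequality follows. Worse, the normal forms you invoke for elements with nonzero $t$-exponent are not available here --- Propositions \ref{prop:basegrpelemreps}--\ref{prop:2elemreps} only cover $\Z_k$ --- and in \cite{CEG} the classification of geodesics for general elements is itself derived from Lemma \ref{geo-lemma}, so appealing to it is circular. For part (3), your base case is wrong as stated: rewriting $t^{-1}a^{j_1}ta^{\ell}t^{-1}a^{j_2}t$ to $t^{-1}a^{j_1+k\ell+j_2}t$ multiplies $\ell$ by $k$ and in general produces a strictly \emph{longer} word (take $k=5$, $j_1=j_2=1$, $\ell=2$: length $8$ becomes length $14$), and the further reduction $t^{-1}a^{kN}t=a^{N}$ is only available when $k\mid j_1+j_2$. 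The correct move is the one the paper itself uses in Section \ref{sec:general}: $t^{-1}a^{j_1}t$ represents an element of the abelian normal subgroup $\Z_k$ and hence commutes with $a^{\ell}$, so the subword equals $a^{\ell}t^{-1}a^{j_1+j_2}t$, of length $|\ell|+2+|j_1+j_2|<|\ell|+4+|j_1|+|j_2|$. The inductive step of (3) also needs more than ``invoke parts (1) and (2) on the enlarged subword'': one must argue via the $t$-exponent height function that between two descending pinches the word must turn, yielding either a forbidden free cancellation or an intermediate $ta^{\ell}t^{-1}$, before (1) or (2) applies to a subword of the required shape. Part (2) by inversion is fine. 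As it stands the proposal is a plan rather than a proof: the quantitative length comparison at the heart of (1) is missing, and the elementary case of (3) rests on a rewrite that lengthens the word.
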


The following proposition shows that a conjugacy geodesic $w$ has no `pinches', that is, no subwords of the form $t^{-1}a^{i}t$ or $ta^it^{-1}$ where $i \ne 0$.
\begin{proposition}
Every conjugacy geodesic $w$ for $[(x,m)]$ with $m > 0$ must be, up to a cyclic permutation, of the form $a^{x_0}ta^{x_1}t \cdots a^{x_{m-1}}t$ for some $x_0,\cdots,x_{m-1} \in \Z$.
\end{proposition}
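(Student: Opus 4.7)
The plan is to argue by contradiction. Let $w$ be a conjugacy geodesic for $[(x,m)]$ with $m>0$; I will suppose $w$ contains at least one $t^{-1}$ letter and set $q\ge 1$ for the number of $t^{-1}$'s and $p=m+q$ for the number of $t$'s in $w$, then derive a contradiction.

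First I would observe that every cyclic rotation $w'=vu$ of $w=uv$ satisfies $w'=u^{-1}wu$, hence represents a conjugate element with $|w'|=|w|$; so $w'$ is itself a conjugacy geodesic, and in particular a geodesic. Applying Lemma~\ref{geo-lemma}(3) to every cyclic rotation, and using that geodesics cannot contain $tt^{-1}$ or $t^{-1}t$, I will conclude that cyclically $w$ contains at most one subword of each of the two forms $t^{-1}a^it$ and $ta^it^{-1}$ with $i\ne 0$. Since $p,q\ge 1$, the cyclic sign sequence of $t^{\pm 1}$ letters must toggle at least once in each direction, so these bounds are attained and the $t^{-1}$'s and $t$'s each form a single contiguous cyclic arc. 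After a rotation,
\[
w=a^{y_0}\,t^{-1}a^{y_1}t^{-1}\cdots t^{-1}a^{y_q}\,t\,a^{y_{q+1}}t\cdots t\,a^{y_{p+q}},
\]
with $y_q\ne 0$ (middle pinch) and $y_{p+q}+y_0\ne 0$ (wrap pinch).

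The next step is to rotate $w$ so that the wrap pinch becomes an internal subword; the new rotation contains
\[
t\,a^{y_{p+q}+y_0}\,t^{-1}a^{y_1}t^{-1}\cdots t^{-1}a^{y_q}\,t
\]
as a subword, matching the form $t^{r}a^{i_0}t^{-1}a^{i_1}\cdots t^{-1}a^{i_n}t^{s}$ of Lemma~\ref{geo-lemma}(2) with $r=s=1$, $n=q$, and both $i_0=y_{p+q}+y_0$ and $i_n=y_q$ nonzero. The lemma then forces $r+s\le n$, that is $q\ge 2$, which immediately rules out $q=1$.

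For the remaining case $q\ge 2$, the contradiction will be completed by exhibiting a conjugate of strictly smaller length. The element represented by $w$ is $(N,m)$ with $N=\sum c_i y_i\in\Z$, where each $c_i$ is a positive power of $k$ determined by the position of $y_i$ in the normal form. Conjugating by a suitable power of $a$ shifts $N$ by a multiple of $k^m-1$, and encoding the resulting integer in balanced base $k$ yields a word $a^{x_0}ta^{x_1}\cdots a^{x_{m-1}}t$ whose length $m+\sum|x_i|$ I expect to be strictly less than $|w|=m+2q+\sum|y_i|$. The main obstacle is precisely this strict length comparison for every $q\ge 2$: because the coefficients $c_i$ are V-shaped (minimum $c_q=k^m$, maxima $c_0=c_{p+q}=k^p$) and because the balanced base-$k$ bookkeeping must overcome the $+2q$ overhead from the $t^{-1}$-block, the estimate requires careful tracking that uses the full strength of conjugacy-geodesicity (not merely the cyclic-rotation geodesicity of the previous step) together with the pinch conditions $y_q\ne 0$ and $y_{p+q}+y_0\ne 0$.
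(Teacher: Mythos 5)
Your reduction to the single-arc cyclic form, the two pinch conditions, and the application of Lemma~\ref{geo-lemma}(2) with $r=s=1$ are all correct, but they only yield $q\ge 2$. For $q\ge 2$ you switch to an entirely different strategy (conjugating by a power of $a$ and re-encoding the result in balanced base $k$), and, as you acknowledge yourself, you do not prove the strict length inequality $m+\sum|x_i|<m+2q+\sum|y_i|$ that this strategy needs. That inequality is not a routine verification: nothing in your set-up controls the balanced base-$k$ digits of the shifted integer against $\sum|y_i|+2q$, and making the comparison work would essentially amount to redoing the geodesic classification of \cite{CEG}. So this is a genuine gap, and it covers all but the smallest case of the statement.

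The missing idea is purely combinatorial and needs no counting: in $BS(1,k)$ any subword of $t$-exponent sum zero commutes with $a$. Starting from the arc form $w=a^{x_0}ta^{x_1}t\cdots a^{x_{n-1}}ta^{y_n}t^{-1}a^{y_{n-1}}t^{-1}\cdots a^{y_{m+1}}t^{-1}$ (so $q=n-m$), slide the zero-exponent-sum block $ta^{y_n}t^{-1}$, then $t^2a^{y_n}t^{-1}a^{x_{n-1}+y_{n-1}}t^{-1}$, and so on, leftwards past $a^{x_{n-1}},\dots,a^{x_{m+1}}$, absorbing those powers into the $y$'s. Since $|x_i|+|y_i|\ge|x_i+y_i|$ this cannot increase length, so it produces another geodesic for the same element, now containing the tower $t^{\,q}$ immediately in front of the $t^{-1}$-arc. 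Rotating the prefix $a^{x_0}t$ to the end then exhibits the subword $t^{\,q}a^{y_n}t^{-1}\cdots a^{y_{m+1}}t^{-1}a^{x_0}t$, which falls under Lemma~\ref{geo-lemma}(2) with $r=q$, $s=1$ and exactly $q$ letters $t^{-1}$, forcing $q+1\le q$. This disposes of every $q\ge1$ at once: the whole point is to gather the surplus positive $t$'s into a single tower so that the exponent $r$ in Lemma~\ref{geo-lemma}(2) equals the number of negative letters, which your $r=s=1$ application cannot achieve.
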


\begin{proof}
Let $w$ be a conjugacy geodesic for $[(x,m)]$. 

Suppose that $w$ contains $t^{-1}$ non-trivially. By Lemma \ref{geo-lemma} (3), after cyclically permuting $w$ if necessary, we may assume that \[w = a^{x_0}ta^{x_1}t\cdots a^{x_{n-1}}ta^{y_{n}}t^{-1}a^{y_{n-1}}t^{-1}\cdots a^{y_{m+1}}t^{-1}\] with $x_0, y_n \neq 0$, and $n > m$.
Since $a$ commutes with any word with $t$-exponent sum equal to zero, we can rewrite $w$ as follows, without increasing its length:
\begin{align*}
w&= a^{x_0}ta^{x_1}t\cdots ta^{x_{n-1}}(ta^{y_n}t^{-1})a^{y_{n-1}}t^{-1}a^{y_{n-2}}\cdots a^{y_{m+1}}t^{-1}\\
&= a^{x_0}ta^{x_1}t\cdots a^{x_{n-2}}(t^2a^{y_n}t^{-1}a^{x_{n-1}+y_{n-1}}t^{-1})a^{y_{n-2}}\cdots a^{y_{m+1}}t^{-1}\\
&~~\vdots \\
&= a^{x_0}ta^{x_1}t\cdots a^{x_m} t^{n-m} a^{y_n}t^{-1}a^{x_{n-1}+y_{n-1}}t^{-1}\cdots a^{x_{m+1}+y_{m+1}}t^{-1}.
\end{align*}
 For ease of notation we will rename exponents so that \[w = a^{x_0}ta^{x_1}t\cdots a^{x_m} t^{n-m} a^{y_n}t^{-1}a^{y_{n-1}}t^{-1}\cdots a^{y_{m+1}}t^{-1},\] and note that its cyclic permutation $a^{x_1}t\cdots a^{x_m}t^{n-m}a^{y_n}t^{-1}a^{y_{n-1}}t^{-1}\cdots a^{y_{m+1}}t^{-1}a^{x_0}t$ has a subword $t^{n-m}a^{y_n}t^{-1}a^{y_{n-1}}t^{-1}\cdots a^{y_{m+1}}t^{-1}a^{x_0}t$ which contradicts Lemma \ref{geo-lemma} (2). So, $w$ cannot contain any $t^{-1}$.

Thus, $w$ must have the form $a^{x_0}ta^{x_1}t \cdots a^{x_{m-1}}t$, up to a cyclic permutation.
\end{proof}

Now by checking through the list of geodesics in \cite[Section 4]{CEG}, we see a conjugacy geodesic must be of the form (MWe1a). Translating this to our language and using the fact that a cyclic permutation of a conjugacy geodesic is still a geodesic, we obtain the following proposition:

\begin{proposition}\label{conj-geo}
In $BS(1,k)$, every conjugacy geodesic $w$ for $[(x,m)]$ with $m > 0$ must be, up to a cyclic permutation, of the form $a^{x_0}ta^{x_1}t \cdots a^{x_{m-1}}t$ for some $x_0,\cdots,x_{m-1} \in \Z$ such that:
\begin{itemize}
\item If $k = 2r+1$ is odd, then $|x_i| \le r$ for every $i$.
\item If $k = 2r$ is even, then $|x_i| \le r$, and for each $i$, if $x_{i-1} = r$ then $0 \le x_i < r$, and if $x_{i-1} = -r$ then $-r < x_i \le 0$. (Here and henceforth in this section, we use the convention that $x_{-1} = x_{m-1}$.)
\end{itemize}
\end{proposition}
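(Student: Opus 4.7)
The plan is to combine the shape result of the preceding proposition with the catalogue of geodesic normal forms in \cite[Section 4]{CEG}. By the previous proposition, any conjugacy geodesic $w$ for $[(x,m)]$ with $m > 0$ is, after a suitable cyclic rotation, of the shape $a^{x_0}t a^{x_1}t\cdots a^{x_{m-1}}t$, containing no $t^{-1}$ and with every $t$ appearing as a single letter. Since this rotated word is itself a geodesic for the element it represents, it must agree with one of the normal forms of \cite[Section 4]{CEG}. A quick inspection eliminates every form except the one labelled (MWe1a), because every other form either contains a $t^{-1}$ letter or begins with $t^{j}$ for $j\ge 2$, hence is incompatible with the shape above.

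Next, I would transfer the conditions imposed by (MWe1a) into the present notation. In the odd case $k=2r+1$ this yields only the bound $|x_i|\le r$ for each $i$. In the even case $k=2r$, the form (MWe1a) additionally requires that whenever some coefficient $x_{i-1}$ attains the extremal value $+r$ (resp.\ $-r$), the next coefficient $x_i$ lies in $\{0,1,\ldots,r-1\}$ (resp.\ in $\{-(r-1),\ldots,0\}$). These are exactly the adjacency conditions stated in the proposition, except that, read off directly from (MWe1a), they apply a priori only to strictly internal consecutive pairs $(x_{i-1},x_i)$ and not to the wrap-around pair $(x_{m-1},x_0)$.

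To close the wrap-around, I would use a cyclic-invariance argument: if $w=a^{x_0}ta^{x_1}t\cdots a^{x_{m-1}}t$ is a conjugacy geodesic and $w'$ is any cyclic rotation of $w$, then $w'$ represents a conjugate of the same group element (conjugation by the corresponding prefix of $w$) and has the same word length, so $w'$ is itself a conjugacy geodesic. In particular $w'$ must also match the form (MWe1a), so it too satisfies those adjacency constraints on its own internal consecutive pairs. Varying the starting index of the rotation forces the adjacency condition on $(x_{i-1},x_i)$ to hold for every $i\in\{0,1,\ldots,m-1\}$, with the convention $x_{-1}:=x_{m-1}$ capturing exactly this cyclic closure. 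The only real obstacle is the bookkeeping of matching the (MWe1a) restrictions from \cite{CEG} to the bulleted conditions in the even case; once that translation is in hand, the argument is routine.
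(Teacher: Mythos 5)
Your proposal is correct and follows essentially the same route as the paper: the paper likewise combines the no-pinch shape result with an inspection of the geodesic catalogue in \cite[Section 4]{CEG} to isolate the form (MWe1a), and invokes the fact that a cyclic permutation of a conjugacy geodesic is again a (conjugacy) geodesic to obtain the wrap-around adjacency condition with the convention $x_{-1}=x_{m-1}$. Your write-up merely makes the cyclic-rotation step more explicit than the paper's one-line justification.
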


\subsection{The conjugacy representatives}\label{sec:ConjClasses}

We now give conjugacy representatives for a fixed $m > 0$. Recall that by (\ref{basicconjugation}) two elements $(x,m)$ and $(y,n)$ are conjugate only if $m=n$, so it suffices to restrict the analysis to elements of the form $(x,m)$, with $m$ fixed, in the following arguments.

\begin{lemma}
Suppose $k = 2r+1$ and $m > 0$. Let \[\A_m = \{a^{x_0}ta^{x_1}t \cdots a^{x_{m-1}}t \mid |x_i| \le r\} \setminus \{(a^{-r}t)^m\}.\] Then two words in $\A_m$ are conjugate if and only if they are cyclic permutations of each other, and every word in $\A_m$ is a conjugacy geodesic.
\end{lemma}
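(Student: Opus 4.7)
The plan is to identify $\A_m$ with a set of balanced base-$k$ expansions and reduce conjugacy in $G$ to a cyclic action modulo $k^m-1$. A direct calculation from \eqref{basicconjugation} shows that the word $w=a^{x_0}ta^{x_1}t\cdots a^{x_{m-1}}t$ represents the element $(y,m)$ with $y=\sum_{i=0}^{m-1}x_ik^i$. Since each $|x_i|\le r=(k-1)/2$, we have $y\in[-(k^m-1)/2,(k^m-1)/2]\cap\Z$, and by the standard uniqueness of balanced base-$k$ expansions (induct on $i$, using $|x_i-x_i'|<k$ once the lower digits agree) the assignment $(x_0,\ldots,x_{m-1})\mapsto y$ is a bijection onto this set of $k^m$ integers. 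Among these, the only pair differing by $k^m-1$ is $\{\pm(k^m-1)/2\}$, realized by the sequences $(\pm r,\ldots,\pm r)$. Excluding $(-r,\ldots,-r)$ therefore yields a bijection
\[
\phi\colon\A_m\longrightarrow\Z/(k^m-1)\Z,\qquad w\longmapsto y\bmod(k^m-1).
\]

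Next I would describe $[(y,m)]=\{(k^ey+n(1-k^m),m):e,n\in\Z\}$ using \eqref{basicconjugation}. For $y,y'\in\Z$, conjugacy is equivalent to $y'\equiv k^ey\pmod{k^m-1}$ for some $e\in\Z$; the only subtlety is $e<0$, in which case $k^ey\in\Z$ forces $k^{|e|}\mid y$, and then $k^ey\equiv k^{m-|e|}y\pmod{k^m-1}$ (since $k$ is a unit modulo $k^m-1$) brings us back to the $e\ge0$ situation. The one-line computation $k\sum x_ik^i\equiv x_{m-1}+\sum_{i=0}^{m-2}x_ik^{i+1}\pmod{k^m-1}$ then shows that multiplication by $k$ on $\phi(w)$ corresponds to the cyclic shift $(x_0,\ldots,x_{m-1})\mapsto(x_{m-1},x_0,\ldots,x_{m-2})$ of the sequence, equivalently to a cyclic permutation of $w$.

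Assembling the pieces, two words in $\A_m$ are conjugate iff their $\phi$-images lie in the same $\langle k\rangle$-orbit, iff their sequences are cyclic shifts, iff the words are cyclic permutations; the converse is the standard observation that $u_1u_2$ and $u_2u_1$ are always conjugate in a group. For the geodesicity claim, I would apply Proposition \ref{conj-geo}: every conjugacy geodesic for $[w]$ has the form $a^{y_0}t\cdots a^{y_{m-1}}t$ with $|y_i|\le r$, up to a length-preserving cyclic permutation. If this geodesic lies in $\A_m$, the preceding paragraph makes it a cyclic shift of $w$, so $|w|$ equals the conjugacy-geodesic length. Otherwise it is $(a^{-r}t)^m$, in which case $[w]=[(a^{-r}t)^m]=[(a^rt)^m]$ (both correspond to $y=\pm(k^m-1)/2$, equal mod $k^m-1$), forcing $w=(a^rt)^m$, which has the same length $m(r+1)$ as $(a^{-r}t)^m$.

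The main obstacle I anticipate is the bookkeeping around the single excluded sequence $(-r,\ldots,-r)$: both the bijectivity of $\phi$ and the fallback argument for the case when the candidate geodesic is $(a^{-r}t)^m$ hinge on this one collision, so one must verify carefully that no other collisions occur in the base-$k$ expansion modulo $k^m-1$.
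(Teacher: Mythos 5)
Your proof is correct, but it takes a genuinely different route from the paper's. The paper proves only the even case in detail (declaring the odd case ``the same but simpler''), and its method is a hands-on digit comparison: it first shows that two distinct words of $\A_m$ cannot be conjugate by a power $a^{\ell}$ (bounding $|\ell|\le 1$ and then analysing $\sum(x_i-y_i)k^i=k^m-1$ modulo successive powers of $k$ to locate the unique exceptional pair), and then reduces an arbitrary conjugator $(x,l)$ to a power of $a$ by first cyclically permuting $w$ by $l$ syllables. You instead build a global bijection $\phi\colon\A_m\to\Z/(k^m-1)\Z$ via uniqueness of balanced base-$k$ expansions, identify conjugation with the multiplication-by-$k$ action, and observe that this action is precisely the cyclic shift of digit strings; the single collision at $\pm(k^m-1)/2$ then explains the excluded word $(a^{-r}t)^m$ conceptually rather than by case analysis. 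Your approach is cleaner and more transparent for odd $k$, where the digit set $\{-r,\dots,r\}$ is a genuine complete residue system; its cost is that it does not transfer to the even case, where the admissible digit strings carry adjacency constraints and the expansion is no longer the standard balanced one --- which is presumably why the paper organises the argument the other way around. Two small points to tighten: your displayed description of $[(y,m)]$ with $n\in\Z$ is not literally the full conjugacy class (conjugators with $a$-part in $\Z_k\setminus\Z$ contribute elements $(k^ey+z(1-k^m),m)$ with $z\in\Z_k$), though restricting attention to integer first coordinates one does recover exactly the relation $y'\equiv k^ey\pmod{k^m-1}$ with $e\ge 0$, as you indicate via the invertibility of $k$ modulo $k^m-1$; and in the geodesicity step one should note explicitly that $|w|=|[\overline{w}]|$ forces $w$ to be a geodesic word as well, since $|w|\ge|\overline{w}|\ge|[\overline{w}]|$.
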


The proof of the odd case is the same as the proof for even case, but simpler. Thus, we shall only prove the even case.

\begin{lemma}
Suppose $k = 2r$ and $m > 0$. Let $\A_m$ be the set of words $a^{x_0}ta^{x_1}t \cdots a^{x_{m-1}}t$ satisfying
\begin{enumerate}
\item $|x_i| \le r$,
\item for each $i$, if $x_{i-1} = r$ then $0 \le x_i < r$, and if $x_{i-1} = -r$ then $-r < x_i \le 0$,
\item if $m$ is even, $(a^{-(r-1)}ta^{-r}t)^{\frac m2}$ and $(a^{-r}ta^{-(r-1)}t)^{\frac m2}$ are excluded from $\A_m$.
\end{enumerate}
Then two words in $\A_m$ are conjugate if and only if they are cyclic permutations of each other, and every word in $\A_m$ is a conjugacy geodesic.
\end{lemma}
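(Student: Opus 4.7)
The plan is to reduce the lemma to a bijection between tuples in $\mathcal{A}_m$ and residues in $\Z/(k^m-1)\Z$ via the digit map, and then to use Proposition~\ref{conj-geo} to promote geodesicity to conjugacy-geodesicity. First I would record the basic correspondence: using $t^jat^{-j}=a^{k^j}$ one sees that the word $w=a^{x_0}ta^{x_1}t\cdots a^{x_{m-1}}t$ has length $\sum_{i=0}^{m-1}|x_i|+m$ and represents $(y,m)\in BS(1,k)$ with $y=\sum_{i=0}^{m-1} x_ik^i$. By (\ref{basicconjugation}), two elements $(y_1,m)$ and $(y_2,m)$ are conjugate iff $y_1\equiv k^e y_2\pmod{k^m-1}$ for some $e\in\Z$, and since $k^m\equiv 1\pmod{k^m-1}$ a cyclic shift of the tuple $(x_i)$ corresponds to multiplication of $y$ by $k$ modulo $k^m-1$. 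Thus cyclic shifts of words in $\mathcal{A}_m$ always represent conjugate elements, and the two assertions of the lemma translate into (a) the digit map $\phi\colon\mathcal{A}_m\to\Z/(k^m-1)\Z$ is a bijection, and (b) each $w\in\mathcal{A}_m$ has minimal length within its conjugacy class $[w]$.

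For (a), I would analyse the single source of redundancy in the signed balanced representation with $|x_i|\le r$, namely the local carry identity $rk^i+ck^{i+1}=-rk^i+(c+1)k^{i+1}$ at adjacent positions. Condition~(2) is designed to pick a canonical side of every such local carry: after a digit $r$ the next digit cannot be $r$ or negative, and symmetrically after $-r$. The only remaining ambiguity is global and occurs as a cyclic wrap-around: when $m$ is even, a direct computation shows that the tuples $(-(r-1),-r,\ldots,-(r-1),-r)$ and $(r,r-1,\ldots,r,r-1)$ both evaluate to $r(k^m-1)/(k+1)$ modulo $k^m-1$, and condition~(3) removes the first while leaving the second (which itself satisfies (1) and (2)). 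For surjectivity of $\phi$ I would run a greedy digit-by-digit inversion --- read $y$ modulo $k$ to determine $x_0$ up to the two-way choice at $\pm r$, which condition~(2) resolves in terms of the preceding (cyclic) digit --- or, alternatively, verify $|\mathcal{A}_m|=k^m-1$ directly by a transfer-matrix trace computation on the cyclic sequence and deduce bijectivity from cardinalities.

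For (b), fix $w=a^{x_0}t\cdots a^{x_{m-1}}t\in\mathcal{A}_m$ and let $v$ be any conjugacy geodesic for $[w]$. By Proposition~\ref{conj-geo}, after a cyclic rotation $v=a^{y_0}t\cdots a^{y_{m-1}}t$ where $(y_i)$ satisfies conditions (1) and (2). So $(y_i)$ lies either in $\mathcal{A}_m$ or is one of the two excluded alternating tuples. In the first case, (a) forces $(y_i)$ to be a cyclic shift of $(x_i)$ and $|v|=|w|$. In the second case $[w]$ must be the class of $r(k^m-1)/(k+1)$, so by (a) $w$ itself is a cyclic shift of $(r,r-1,\ldots,r,r-1)$; both tuples then have digit-sum $\tfrac{m(2r-1)}{2}$, so again $|v|=|w|$. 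Either way $w$ attains the minimal length in its conjugacy class, confirming that $w$ is a conjugacy geodesic.

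I expect the main obstacle to be the injectivity half of (a): verifying that conditions (1)--(3) pin down a unique representative for each residue requires a case-by-case check that condition~(2) rules out each local carry in both directions, together with the identification of the alternating tuple as the single cyclic exception needing a global correction. In the odd-case counterpart this exception collapses to the constant tuples $(\pm r,\ldots,\pm r)$, which explains why the lemma for $k=2r+1$ dispenses with an analogue of condition~(3).
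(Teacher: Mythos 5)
Your proposal is correct and rests on essentially the same argument as the paper's: the decisive content in both is the base-$k$ digit analysis showing that condition (2) forbids one side of every carry $rk^i+ck^{i+1}=-rk^i+(c+1)k^{i+1}$ and that condition (3) removes the single cyclic wrap-around collision (the alternating tuples, whose equal length then settles the geodesic claim via Proposition~\ref{conj-geo} exactly as in the paper). Your packaging --- conjugacy as the congruence $y_1\equiv k^e y_2 \pmod{k^m-1}$ with cyclic shifts realising multiplication by $k$, plus injectivity of the digit map --- is a clean reorganisation of the paper's two-step reduction (first conjugators $a^{\ell}$, with $|\ell|\le 1$ forced by the bound $|x|\le(k^m-1)\tfrac{k}{2(k-1)}$, then arbitrary conjugators reduced to powers of $a$ by cyclic permutation), and the ``single source of redundancy'' assertion you rightly flag as the main obstacle is precisely what the paper's induction modulo $k^{i_1+1},k^{i_1+2},\dots$ establishes.
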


\begin{proof}
We first show that two distinct words $a^{x_0}ta^{x_1}t \cdots a^{x_{m-1}}t$ and $a^{y_0}ta^{y_1}t \cdots a^{y_{m-1}}t$ in $\A_m$ cannot be conjugate by $a^\ell$, $\ell \neq 0$. Suppose we have such a pair, and suppose these two words represent the elements $(x,m)$ and $(y,m)$. Since $x = \sum\limits_{i = 0}^{m-1} x_i k^i$, (1) implies that $|x| \le (k^m-1)\frac{k}{2(k-1)}$, and similarly $|y| \le (k^m-1)\frac{k}{2(k-1)}$. The conjugation by $a^{\ell}$ translates into $x = y+\ell(k^m-1)$, which together with the above inequalities forces $|\ell| = 1$. Without loss, we will assume that $\ell=1$. 

Now as $\ell = 1$, $x-y=\sum\limits_{i = 0}^{m-1} (x_i-y_i)k^i = k^m-1 = (k-1)+(k-1)k+\cdots+(k-1)k^{m-1}$. Suppose $x_i - y_i \neq k-1$ for some $i$, and let $i_1$ be the smallest such index. By taking 
\begin{equation}\label{xyequation}
\sum\limits_{i = 0}^{m-1} (x_i-y_i)k^i = (k-1)+(k-1)k+\cdots+(k-1)k^{m-1}
\end{equation}
modulo $k^{i_1+1}$, we must have $x_{i_1}-y_{i_1} = -1$ since all higher terms are $0 \pmod{ k^{i_1+1}}$, all lower terms on both sides cancel, and so $(x_{i_1}-y_{i_1})k^{i_1}\equiv (k-1)k^{i_1} \pmod{ k^{i_1+1}}$ (and $|x_{i_1}-y_{i_1}| \le k$).  By taking equation (\ref{xyequation}) modulo $k^{i_1+2}$, similar computations show that $x_{i_1+1}-y_{i_1+1} \equiv 0 \pmod k$. If $x_{i_1+1}-y_{i_1+1} = 0$, then the same argument implies $x_{i_1+2}-y_{i_1+2} \equiv 0 \pmod k$, etc. Suppose $i_2$ is the first index such that $x_{i_2}-y_{i_2} \neq 0$. 
\begin{itemize}
\item If $x_{i_2}-y_{i_2} = k$, this forces $x_{i_2} = r$ and $y_{i_2} = -r$. By (2), this means $0 \le x_{i_2+1} < r$ and $0 \ge y_{i_2+1} > -r$, so $0 \le x_{i_2+1}-y_{i_2+1} \le k-2$. But equation (\ref{xyequation}) modulo $k^{i_2+2}$ implies $x_{i_2+1}-y_{i_2+1} \equiv -1 \pmod k$, a contradiction.
\item If $x_{i_2}-y_{i_2} = -k$, this forces $x_{i_2} = -r$ and $y_{i_2} = r$. By (2), this means $0 \ge x_{i_2+1} > -r$ and $0 \le y_{i_2+1} < r$, so $0 \ge x_{i_2+1}-y_{i_2+1} \ge -k+2$. But equation (\ref{xyequation}) modulo $k^{i_2+2}$ implies $x_{i_2+1}-y_{i_2+1} \equiv 1 \pmod k$, a contradiction.
\end{itemize}
This means that we actually have $x_i - y_i = k-1 = 2r-1$ for all $i$. Thus, $(x_i,y_i) = (r,-r+1)$ or $(r-1,-r)$ for every $i$. But by (2), $m$ must be even since the $r$ and $r-1$ need to alternate in $w$ as a cyclic word, and $a^{y_0}ta^{y_1}t \cdots a^{y_{m-1}}t = (a^{-r}ta^{-r+1}t)^\frac m2$ or $(a^{-r+1}ta^{-r}t)^\frac m2$. This violates (3), and thus any two distinct words in $\A_m$ cannot be conjugate by $a^{\ell}$.

Now let $w = a^{x_0}ta^{x_1}t \cdots a^{x_{m-1}}t \in \A_m$ and suppose some word $u$ in $\A_m$ is conjugate to $w$, that is, $w=u^{(x,l)}$, where $(x,l)\in BS(1,k)$. Consider the cyclic permutation $w'$ of $w$ ending in $t$ given by $w'=w^v$, where $v=a^{x_0}ta^{x_1}t \cdots a^{x_{l-1}}t$ and $x_{p} = x_{p \mod m}$, for any $p \in \N$. Clearly $w'$ is also in $\A_m$, and $v$ has the form $(y,l)$.  Then $u^{(x,l)^{-1}(y,l)}=w'$, so $u$ is conjugate to $w'$ by a power of $a$ since $(x,l)^{-1}(y,l)=(z, 0)$ for some $z \in \Z_k$, which gives a contradiction to our previous claim. Thus $u$ must be a cyclic permutation of $w$, proving the first assertion of the lemma. 

Suppose $w = a^{x_0}ta^{x_1}t \cdots a^{x_{m-1}}t \in \A_m$, and take a conjugacy geodesic $u$ of $[w]$. By Proposition \ref{conj-geo}, $u$ is a word in $\A_m$ if it is not excluded by (3) and by the first assertion of this lemma, $w$ is a cyclic permutation of $u$, thus also a conjugacy geodesic. If $u$ is of the form described in (3), then by the proof above, $w = (a^{r-1}ta^rt)^\frac m2$ or $(a^{r}ta^{r-1}t)^\frac m2$ and has the same length as $u$, so is also a conjugacy geodesic.
\end{proof}

The above discussion concerns the case when $m > 0$. The antiautomorphism $(x,m)\mapsto (x,m)^{-1}=(-\frac{x}{k^m},-m)$ provides a bijection between elements of the form $(x,m)$ and those of the form $(y,-m)$. Since $g^{-1}$ has the same length as $g$, and taking inverses preserves conjugacy, the results above translate to the case when $m<0$. Thus, writing $\A_+ = \bigcup\limits_{m>0} \A_m$ and $\A_- = \A_+^{-1}$, we have the following description of conjugacy representatives:

\begin{corollary}\label{cor:conjgeos} The set $\mathcal{A}$, modulo cyclic permutations, gives a set of minimal length conjugacy representatives for the conjugacy classes of the group $BS(1,k)$ that are not in the base group $\Z_k$.

\begin{enumerate}
	\item Let $k=2r+1$, $r\geq 1$. Then $\mathcal{A}=\mathcal{A}_+ \cup \mathcal{A}_-$, where
	\begin{align*}
	\mathcal{A}_+=&\{a^{x_0}ta^{x_1}t\cdots a^{x_{m-1}}t\mid m\geq1, |x_i|\leq r \ \textrm{for} \ 0 \leq i \leq m-1\}\setminus\{(a^{-r}t)^m\mid m\geq1\},\\
	\mathcal{A}_- =&\{t^{-1}a^{x_0}t^{-1}a^{x_1}\cdots t^{-1}a^{x_{m-1}}\mid m\geq1, |x_i|\leq r\}\setminus\{(t^{-1}a^{-r})^m\mid m\geq1\}.
	\end{align*}
	\item Let $k=2r$, $r\geq 1$. Then $\mathcal{A}=\mathcal{A}_+ \cup \mathcal{A}_-$, where
	\begin{align*}
	\mathcal{A}_+=&\{a^{x_0}ta^{x_1}t\cdots a^{x_{m-1}}t\mid m\geq1, |x_i|\leq r, \forall i(x_{i-1} = \pm r \implies 0 \le \pm x_i< r)\}\\ 
	&\setminus\{(a^{-r+1}ta^{-r}t)^\frac m2, (a^{-r}ta^{-r+1}t)^\frac m2\mid m\geq2, m\equiv 0\pmod 2\},\\
	\mathcal{A}_- =&\{t^{-1}a^{x_0}t^{-1} \cdots t^{-1}a^{x_{m-1}}\mid m\geq1, |x_i|\leq r, \forall i(x_{i} = \pm r \implies 0 \le \pm x_{i-1}< r)\}\\
	&\setminus\{(t^{-1}a^{r}t^{-1}a^{r-1})^\frac m2, (t^{-1}a^{r-1}t^{-1}a^r)^m\mid m\geq2, m\equiv 0\pmod 2\}.
	\end{align*}
\end{enumerate}
%Furthermore, reducing each set by cyclic permutation yields a set of a unique conjugacy geodesics.
\end{corollary}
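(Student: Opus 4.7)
The plan is to assemble Corollary \ref{cor:conjgeos} by combining the two preceding lemmas (which handle $m>0$ for $k$ odd and $k$ even respectively) with the antiautomorphism $\iota \colon g \mapsto g^{-1}$ to transfer everything to the case $m<0$. For each fixed $m>0$, the cited lemmas already give exactly what is needed: every word in $\A_m$ is a conjugacy geodesic, and two such words are conjugate precisely when they are cyclic permutations of each other. Taking the union over $m \geq 1$ yields $\A_+$, and completeness (that every conjugacy class with $t$-exponent sum $m>0$ has such a representative) follows from Proposition \ref{conj-geo}, which pins down the shape of any conjugacy geodesic for such a class up to cyclic permutation.

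For $m<0$ I would invoke the antiautomorphism $\iota$. Since $\iota$ reverses the letters of a word and inverts each generator, it preserves word length; since $\iota(hgh^{-1}) = \iota(h)^{-1}\iota(g)\iota(h)$ it sends conjugacy classes to conjugacy classes; and concretely it maps $(x,m)$ to $(-x/k^m,-m)$, giving a length-preserving bijection between classes with $t$-exponent sum $m$ and those with $t$-exponent sum $-m$. Applying $\iota$ letter-by-letter to a generic element $a^{x_0}t \cdots a^{x_{m-1}}t$ of $\A_+$, and then relabelling $y_i := -x_{m-1-i}$, produces exactly the combinatorial form $t^{-1}a^{y_0}t^{-1} \cdots t^{-1}a^{y_{m-1}}$ that defines $\A_-$. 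The three assertions we need on the $\A_-$ side (distinct words are conjugate iff cyclic permutations, each is a conjugacy geodesic, and completeness) therefore transfer directly from the corresponding assertions for $\A_+$.

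The main obstacle is bookkeeping of the excluded families, and checking that the combinatorial constraints match up correctly under the relabelling $y_i = -x_{m-1-i}$. In the odd case, the conjugate pair $\{(a^rt)^m, (a^{-r}t)^m\}$ forces one to be dropped from $\A_+$; applying $\iota$ and tracking the renaming shows that the family dropped on the $\A_-$ side is exactly $(t^{-1}a^{-r})^m$, as claimed. In the even case, the two exceptional words $(a^{-(r-1)}ta^{-r}t)^{m/2}$ and $(a^{-r}ta^{-(r-1)}t)^{m/2}$ form one cyclic orbit that is conjugate (by $a$) to the cyclic orbit of $(a^{r-1}ta^{r}t)^{m/2}$ and $(a^{r}ta^{r-1}t)^{m/2}$, so both of the former must be excluded from $\A_+$ to avoid double-counting; pushing these exclusions through $\iota$ (and cyclically rotating so that the words begin with $t^{-1}$) gives precisely the two families excluded from $\A_-$ in the statement. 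Once these sign-and-orientation checks are done by direct calculation, the union $\A = \A_+ \cup \A_-$ is, modulo cyclic permutations, a complete and irredundant set of minimal-length representatives for all conjugacy classes of $BS(1,k)$ outside the base group $\Z_k$.
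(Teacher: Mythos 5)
Your proposal is correct and follows the paper's overall skeleton --- the two lemmas handle $m>0$, and the antiautomorphism $g\mapsto g^{-1}$ transfers everything to $m<0$ --- but it replaces the one substantial piece of the paper's proof with a different argument. The paper establishes completeness (that every class $[(x,m)]$ with $m>0$ is represented in $\mathcal{A}_+$) constructively: starting from the Collins--Edjvet--Gill normal forms it conjugates an arbitrary $(x,m)$ into the form $a^{x_0}t\cdots a^{x_{m-1}}ta^{x_m}$, then runs two explicit rewriting procedures (each with a termination argument, via a decreasing weight $\sum_i|x_i|$ and a left-to-right sweep respectively) to force $|x_i|\le r$ and then the sign conditions. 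You instead deduce completeness directly from Proposition \ref{conj-geo}: every class has a conjugacy geodesic, that proposition pins down its form up to cyclic permutation, and the only words of that form missing from $\mathcal{A}_m$ are the excluded families, whose classes are represented anyway by their positive-exponent conjugates. This is logically sound (Proposition \ref{conj-geo} is established before the corollary and does not depend on it) and noticeably shorter; what the paper's longer route buys is a self-contained, algorithmic reduction that does not lean on the somewhat informally justified classification of conjugacy geodesics. One small correction to your bookkeeping in the odd case: $\left((a^{-r}t)^m\right)^{-1}=(t^{-1}a^{r})^m$, so a literal application of $\iota$ to $\mathcal{A}_+$ would exclude $(t^{-1}a^{r})^m$ from $\mathcal{A}_-$, not $(t^{-1}a^{-r})^m$ as the corollary states and as you claim your calculation yields. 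Since $(t^{-1}a^{r})^m$ and $(t^{-1}a^{-r})^m$ are conjugate, either one may be excluded and the statement remains correct, but you should observe that the set in the corollary is not literally $\mathcal{A}_+^{-1}$ in the odd case rather than assert that the signs match.
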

\begin{proof}
We have already shown that the elements of $\mathcal{A}$ are conjugacy geodesics, and unique up to cyclic permutation. It remains to show that every conjugacy class of $BS(1,k)$ not contained in $\Z_k$ has a representative in $\mathcal{A}$.

By the observation above, we only need to show that for $m>0$, every element of the form $(x,m)$ is conjugate to an element represented by a word in $\mathcal{A}_+$. Again, we will only prove this for the more complicated case of $k=2r$.

First, we show that any element of the form $(x,m)$ is conjugate to an element represented by a word of the form $a^{x_0}ta^{x_1}t\cdots a^{x_{m-1}}ta^{x_m}$ (where $x_i\in\Z$). From \cite{CEG}, the element $(x,m)$ has a (geodesic) representative in one of the following forms:
\begin{itemize}
	\item MWe1a: $a^{x_0}ta^{x_1}t\cdots ta^{x_m}$
	\item MWe2a: $t^{-n}a^{x_0}ta^{x_1}t\cdots t a^{x_{m+n}}$, some $1\leq n<m$
	\item MWe3a: $a^{x_0}ta^{x_1}t\cdots ta^{x_{m+n}}t^{-n}$, some $1\leq n<m$
	\item MWe4a: $t^{-l}a^{x_0}ta^{x_1}t\cdots ta^{x_{m+n+l}}t^{-n}$, some $n,l\geq1$, $n+l<m$.
\end{itemize}
Words in MWe1a are already of the required form. Cyclic permutation ensures that words in MWe2a or MWe4a are conjugate to words in MWe3a. Such a word can be expressed as follows: \[a^{x_0}ta^{x_1}t\cdots ta^{x_{m+n}}t^{-n}=a^{x_0}ta^{x_1}t\cdots t a^{\sum_{j=0}^nx_{m+j}k^j}\] by expressing the suffix $a^{x_m}ta^{x_{m+1}}t\cdots a^{x_{m+n}}t^{-n}$ in terms of $a$ only. This is in the required form.

Next, we show that any element of the form $a^{x_0}ta^{x_1}t\cdots a^{x_{m-1}}ta^{x_m}$ is conjugate to an element of the form $a^{y_0}ta^{y_1}t\cdots a^{y_{m-1}}t$, where $|y_i|\leq r$ for all $i$. To see this, consider the following procedure:
\begin{enumerate}
\item Choose $i<m$ such that $|x_i|>r$. Modify the word using the rewrite $a^{\pm(r+1)}t\mapsto a^{\mp(r-1)}ta^{\pm1}$ (which doesn't change the group element). Repeat this step until there is no such $i$.
\item Cyclically permute the (now possibly altered) $a^{x_m}$ to the front of the word (which doesn't change the conjugacy class). If there is now some $i<m$ with $|x_i|>r$, return to step $1$. Otherwise the procedure terminates.
\end{enumerate}
Clearly if this process terminates we will have a word in the desired form. To see that it does indeed terminate, consider the quantity $h:=\sum_{i=0}^m|x_i|$. The rewrite in step $1$, applied to $a^{x_i}$ say, reduces $|x_i|$ by $2$, and modifies $|x_{i+1}|$ by $\pm1$, depending on signs, thus step $1$ always reduces $h$. Step $2$ cannot increase $h$, it either keeps it constant or reduces it, depending on the signs of $x_0$ and $x_m$. Since $h$ can never be negative, the process must terminate.

Finally, we show that any element of the form $a^{y_0}ta^{y_1}t\cdots a^{y_{m-1}}t$, where $|y_i|\leq r$ for all $i$, is conjugate to an element of $\mathcal{A}_+$. Consider the following procedure:
\begin{enumerate}
	\item If there are any $i$s with $x_{i-1}=\pm r$ and $\pm x_i<0$, rewrite the left-most occurrence according to the rule $a^{\pm r}ta^{x_i}t\mapsto a^{\mp r}ta^{x_i\pm1}t$. Repeat this step until there are no such $i$.
	\item If there are any subwords of the form $a^{\pm r}ta^{\pm r}t$, rewrite the left-most such subword to $a^{\mp r}ta^{\mp (r-1)}ta^{\pm1}$. Repeat this step until there are no such subwords.
	\item If the previous steps have resulted in a new $a^{\pm1}$ appearing at the end of the word, cyclically permute it to the front. Return to step $1$.
\end{enumerate}
It is clear that if this process terminates, the new word will either be an element of $\mathcal{A}_+$, or will be in the set $\{(a^{-r+1}ta^{-r}t)^\frac m2, (a^{-r}ta^{-r+1}t)^\frac m2\mid m\geq2, m\equiv 0\pmod 2\}$. In the latter case, the word is conjugate to an element of $\mathcal{A}_+$. This finishes the proof.

To see that the process terminates, note that since we work from left to right, each step will only be repeated a finite number of times before moving onto the next step. Furthermore, working left to right in step $1$ also ensures that no additional candidates for step $2$ are created. Repeating step $2$ any number of times will result in at most one $a^{\pm1}$ appearing at the right hand end of the word. After cyclically permuting, and returning to step $1$, there may be a subword of the form $a^{\pm r}ta^{\pm r}t$ at the start of the word. However, after repeating step $2$ as many times as necessary, any letter appearing at the right hand end of the word will have the same sign at the previous time, and thus when cyclically permuted cannot result in another subword of the form $a^{\pm r}ta^{\pm r}t$. Thus the process will terminate.
\end{proof}

\section{The conjugacy growth series of $BS(1,k)$}\label{sec:growthseries}

In this section we show, in Corollary \ref{cor:mainresult}, that the conjugacy growth series of $BS(1,k)$ with respect to its standard generating set is transcendental. This follows from determining the asymptotics (and transcendental behaviour) of conjugacy growth outside $\Z_k$ in the following proposition.

\begin{proposition}\label{prop:nonabelian}
The generating function for the number of conjugacy classes in $BS(1,k)$ of the form $[(x,m)]$, with $m\neq 0$, is transcendental.
\end{proposition}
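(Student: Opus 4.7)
The plan is to use the necklace description of conjugacy classes from Corollary~\ref{cor:conjgeos} to express the relevant generating function via the Flajolet-Sedgewick cycle construction, and then deduce transcendence from coefficient asymptotics of the form $\alpha^n/n$. Since elements of the form $(x,m)$ are only conjugate to elements $(y,m)$ with the same second coordinate, and since the antiautomorphism $g \mapsto g^{-1}$ gives a length-preserving bijection between classes with $m>0$ and $m<0$, it suffices to count conjugacy classes with $m>0$.

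By Corollary~\ref{cor:conjgeos}, those classes are in bijection, up to finitely many explicit exclusions, with necklaces (words modulo cyclic rotation) of length $m$ over a block alphabet $\{a^x t : |x| \leq r\}$, subject in the even case to an adjacency rule on consecutive blocks. Let $B(z)$ be the length generating function of a single block: in the odd case this is simply the polynomial $B(z) = z + 2z^2 + \cdots + 2z^{r+1}$, and in the even case I would encode the adjacency rule as a transfer matrix $M(z)$ and replace $1 - B(z)$ by $\det(I - M(z))$ throughout. In either case the resulting denominator is rational, $B(0) = 0$, and there is a unique smallest positive root $\rho \in (0,1)$ of $1 - B(z) = 0$ (guaranteed in the matrix case by Perron-Frobenius). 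The cycle construction then yields
\[
\sum_{d \geq 1} \frac{\phi(d)}{d} \log \frac{1}{1 - B(z^d)},
\]
together with an analogous expression via $\det(I - M(z^d))$ in the even case, and the few words excluded in Corollary~\ref{cor:conjgeos} contribute only rational corrections.

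Next I would carry out singularity analysis at $z = \rho$. The $d = 1$ summand contributes a logarithmic singularity $-\log(1 - z/\rho) + O(1)$, whereas the summands with $d \geq 2$ have singularities at $\rho^{1/d} > \rho$ and therefore remain analytic in a neighbourhood of $\rho$. Standard transfer theorems then produce the asymptotic
\[
c(n) \sim \frac{C}{n}\rho^{-n}, \qquad n \to \infty,
\]
for the number of length-$n$ conjugacy classes with $m \neq 0$. By a classical theorem of Flajolet, the coefficients of any algebraic power series have asymptotic behaviour of the form $C'\alpha^n n^r$ with $r \in \mathbb{Q}$ and $r \notin \{-1, -2, \ldots\}$; in particular the exponent $-1$ is forbidden, so the above asymptotic precludes algebraicity and forces the generating function to be transcendental.

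The main obstacle will be the even case: one has to set up the transfer matrix respecting the adjacency constraint, check that $\det(I - M(z))$ has a simple smallest positive root (which should follow from Perron-Frobenius once irreducibility of the symbolic alphabet is verified), and confirm that the two-periodic exclusions from $\mathcal{A}_+$ amount to a rational correction and hence cannot cancel the logarithmic singularity coming from the cycle construction. The odd case is more direct, being just the cycle construction applied to a polynomial alphabet generating function.
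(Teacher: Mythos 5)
Your proposal is correct and follows the same overall strategy as the paper --- reduce to counting the necklaces of Corollary \ref{cor:conjgeos}, show the count behaves like $\rho^{-n}/n$, and invoke Flajolet's restriction on the possible coefficient asymptotics of algebraic series --- but the middle step is executed differently. The paper avoids singularity analysis of the cycle construction entirely: it computes the plain word count of $\mathcal{A}_o$ (resp.\ $\mathcal{A}_e$) by writing the language as $\mathcal{S}^*$ for an explicit finite block set, obtaining $\sim c\rho^{-n}$ from a simple dominant pole of a rational function, and then passes to necklaces by the elementary observation that each necklace of length $n$ accounts for $m$ words, where $n/(r+1)\le m\le n$ and proper powers are negligible; this yields only two-sided bounds $c_1\rho^{-n}/n\le a_n\le c_2\rho^{-n}/n$, which already suffices for \cite[Theorem D]{Flajolet87}. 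Your route --- the exact necklace series $\sum_{d\ge1}\frac{\phi(d)}{d}\log\frac{1}{1-B(z^d)}$ plus transfer theorems --- gives the sharper asymptotic $c(n)\sim C\rho^{-n}/n$ and is essentially what the paper does separately in Section \ref{sec:formulas} when it writes down the exact series; your argument in effect merges the two sections. In the even case the paper also sidesteps your transfer matrix by re-blocking the alphabet (absorbing each $a^{\pm r}t$ together with its constrained successor into a single longer block), so the language is again a free monoid $\mathcal{S}_e^*$ and no Perron--Frobenius input is needed. If you carry out your version, the points requiring care are the ones you flag, plus two you do not: (i) the necklace generating function for a transfer matrix is $\sum_{d\ge1}\frac{\phi(d)}{d}\log\frac{1}{\det(I-M(z^d))}$, whose Burnside-type justification is slightly more delicate when blocks have unequal lengths; and (ii) before applying a transfer theorem you must check that $\rho$ is the unique singularity on its circle of convergence, i.e.\ aperiodicity of $B$ (immediate here, since the support of $B$ contains both $z$ and $z^2$) and uniform convergence of the $d\ge2$ tail on a disc of radius strictly larger than $\rho$.
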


\begin{proof} We compute the asymptotics for the number of conjugacy classes of length $n$ in $BS(1,k)$ by finding the growth of the set $\mathcal{A}$ in Corollary \ref{cor:conjgeos}. 

We start with the odd case $k=2r+1$ and apply Corollary \ref{cor:conjgeos} (1). Since there is a length-preserving bijection between $\mathcal{A}_+$ and $\mathcal{A}_-$, it suffices to consider the asymptotics for $\mathcal{A}_+$. Moreover, since the set $\mathcal{N}_o=\{(a^{-r}t)^m\mid m\geq1\}$ being removed has negligible size (there is at most one word in $\mathcal{N}_o$ of length $n$ for fixed $r$ and $n$), it is sufficient to compute the growth of $\mathcal{A}_o:=\{a^{x_0}ta^{x_1}t\cdots a^{x_{m-1}}t \mid m\geq 1, |x_i|\leq r\}$. Let $\mathcal{S}_o:=\{t, at, a^{-1}t, a^2t, a^{-2}t, \dots, a^rt, a^{-r}t\}$. Then $\mathcal{A}_o$ is equal to $\mathcal{S}_o^*$, so the generating function for $\mathcal{A}_o$ is $\mathcal{A}_o(z)= \frac{1}{1- \mathcal{S}_o(z)},$ where 
\begin{equation}\label{Sformula}
\mathcal{S}_o(z)=z+2z^2+ \dots + 2z^{r+1}=z+2z^2\frac{1-z^r}{1-z}
\end{equation}
 is the generating function of $\mathcal{S}_o$ (see Flajolet, Theorem I.1, p. 27). We get 
\begin{equation}\label{nonZseriesOdd}
\mathcal{A}_o(z)=\frac{1}{1-z-2z^2\frac{1-z^r}{1-z}}=\frac{1-z}{1-2z-z^2+2z^{r+2}}.
\end{equation}
The denominator of $\mathcal{A}_o(z)$, that is, the polynomial $p(z)=1-2z-z^2+2z^{r+2}$, satisfies $p(0)=1>0$ and $p(\frac{1}{2})<0$ (and $p(\frac{1}{2})= 0$ for $r=1$), so it has a root $\rho_o \in (0,\frac{1}{2})$ (and $\rho_o=\frac{1}{2}$ for $r=1$). Moreover, $p'(\alpha)=-2-2\alpha+2(r+2)\alpha^{r+1}<0$ for $0<\alpha<\frac{1}{2}$, so $\rho_o$ is a simple root. Also, $1-2z-z^2+2z^{r+2} = (1-z^2)-2z(1-z^{r+1})$, so it has no root in $(-1,0)$. Thus the growth rate of the set $\mathcal{A}_o$ is $\frac{1}{\rho_o}>2$, which implies that the number of words of length $n$ in $\mathcal{A}_o$, and therefore also $\mathcal{A}$, is asymptotically $c_o(r) \rho_o^{-n}$, where $c_o(r)$ is a constant depending on $r$. 

Now let $k$ be even, $k=2r$. The counting is similar, except that we impose on the set $\mathcal{A}_o:=\{a^{x_0}ta^{x_1}t\cdots a^{x_{m-1}}t \mid m\geq 1, |x_i|\leq r\}$ considered above the conditions from Corollary \ref{cor:conjgeos}(2), that is, $a^rt$ and $a^{-r}t$ can each be followed only by $r$ words out of the total $2r+1$ in $\mathcal{S}$. Call the set with these restrictions $\mathcal{A}_e$, and let $\mathcal{S}_e=\{t, at, a^{-1}t, \dots, a^{r-1}t, a^{-r+1}t, a^{\pm r}tt, a^{\pm r}ta^{\pm 1}t, \dots , a^{\pm r}ta^{\pm(r-1)}t\}$ (and $\mathcal{S}_e=\{t, a^{\pm 1}tt\}$ for $r=1$). Note that $\mathcal{S}_e^*$ does not include any words that end in $a^rt$ or $a^{-r}t$, but since we need to consider the set $\mathcal{A}_e$ up to cyclic permutations, the set $\mathcal{S}_e^*$ will in fact suffice to give the asymptotics for $\mathcal{A}_e$ up to cyclic permutations, since it ensures only `legal' occurrences of $a^rt$ or $a^{-r}t$ appear when cyclically permuting the words.

Then since 
\begin{align}\label{Seformula}
\mathcal{S}_e(z)&=z+2z^2+ \dots + 2z^{r}+2z^{r+2}+\dots+2z^{2r+1}\\
\nonumber&=z+2z^2\frac{z^{2r}-1}{z-1}-2z^{r+1}=\frac{-z-z^2+2z^{r+1}-2z^{r+2}+2z^{2r+2}}{z-1}
\end{align}
we have
\begin{align}\label{nonZseriesEven}
\mathcal{S}_e^*&=\frac{1-z}{1-2z-z^2+2z^{r+1}-2z^{r+2}+2z^{2r+2}}.
\end{align}

For $r> 1$, the denominator $p(z)$ of  (\ref{nonZseriesEven}) satisfies $p(0)=1>0$ and $p(\frac{1}{2})<0$, so it has a root $\rho_e \in (0,\frac{1}{2})$. Moreover, $p'(\alpha)=-2-2\alpha+2(r+1)\alpha^{r}-2(r+2)\alpha^{r+1}+2(2r+2)\alpha^{2r+1}<0$ for $0<\alpha<\frac{1}{2}$, so $\rho_e$ is a simple root, and the growth of the languages $\mathcal{S}_e^*$, and consequently $\mathcal{A}_e$, is $\frac{1}{\rho_e}>0$. (For $r = 1$, $\rho_e \approx 0.590$.) Also, $1-2z-z^2+2z^{r+1}-2z^{r+2}+2z^{2r+2} = (1-z^2)-2z(1+z^{r+1})(1-z^r)$, so it has no root in $(-1,0)$. This implies that the number of words of length $n$ in $\mathcal{A}_e$, is asymptotically $c_e(r) \rho_e^{-n}$, where $c_e(r)$ is a constant depending on $r$.

Now in order to find the growth of the conjugacy classes for $m\neq 0$, we need to count the number of representatives of length $n$ in $\mathcal{A}_o$ or $\mathcal{A}_e$, up to the cyclic permutation of the subwords in $\mathcal{S}_o$ or $\mathcal{S}_e$.  For each word in $\mathcal{A}_o$ or $\mathcal{A}_e$ there are $m$ possible distinct cyclic permutations unless that word is a non-trivial power. Given that the number of powers is negligible compared to the total number of words, for fixed $n$ and $m$ the number of cyclic representatives of words in $\mathcal{A}_o$ and $\mathcal{A}_e$ is approximately $c_o(r)\frac{\rho_o^{-n}}{m}$ and $c_e(r)\frac{\rho_e^{-n}}{m}$, respectively. Since each word of length $n$ in $\mathcal{A}_o$ or $\mathcal{A}_e$ consists of $m$ `syllables' of bounded length we get  $\frac{n}{r+1} \leq m \leq n$ in the odd case and  $\frac{n}{r+\frac12} \leq m \leq n$ in the even case, so the number $a^o_n$ of cyclic representatives in the odd case satisfies 
\begin{equation}\label{inequality:odd}
c_o(r)\frac{\rho_o^{-n}}{n} \leq a^o_n\leq c_o(r)\frac{(r+1)\rho_o^{-n}}{n}
\end{equation}
 and in the even case the number $a^e_n$ of cyclic representatives satisfies 
 \begin{equation}\label{inequality:even}
 c_e(r)\frac{\rho_e^{-n}}{n} \leq a^e_n\leq c_e(r)\frac{(r+\frac12)\rho_e^{-n}}{n}
 \end{equation}

%Then $\mathcal{A}^e_1$ is easily seen to be a regular language, and a finite state automaton recognising it can be constructed as follows. Denote by $a_i$ the word $a^it$ in $\mathcal{S}$. Then the automaton has three states: one start state which has $2r-1$ loops labelled $a_i$, $|i| \leq r-1$, a further edge labeled $a_r$ to the second (accepting) state, and finally an edge labeled $a_{-r}$ to the third (accepting) state; from both second and third state there are $r$ edges going into the start state, labelled with the allowed $a_i$ for each state, respectively. The growth series of this language has to take into account the fact that $a_i$ has length $|i|+1$, but standard techniques using the adjacency matrix of this automaton give that for example, the growth series of $\mathcal{A}^e_1$ for $r=2$ is $$-\frac{1+2t^3}{(t+1)(2t^4+2t-1)},$$
%and the smallest real root of the denominator is a simple pole and the growth of the language is purely exponential.
%

%containing subwords of the form $a^{\pm r}ta^st$, where $s \in \pm \{-r, -r+1, \cdots, -1, r\}$. Denote the set of these subwords $\mathcal{F}$ and note that the cardinality of $\mathcal{F}$ is $2r+2$(for forbidden). Since the words counted are ultimately up to cyclic permutation, we may assume that each word in $\mathcal{R}$ starts with a subword in $\mathcal{F}$.

%The case $m<0$ is analogous and gives the same asymptotics.

Finally, by \cite[Theorem D]{Flajolet87} the generating function for any sequence with asymptotics of the form (\ref{inequality:odd}) or (\ref{inequality:even}), that is, bounded on both sides by terms $\frac{\rho^n}{n}$(up to multiplicative constants), is transcendendal.
\end{proof}

\begin{corollary} \label{cor:mainresult}
The conjugacy growth series for $BS(1, k)$, with respect to the generating set $\{a, t\}$, is transcendental.
\end{corollary}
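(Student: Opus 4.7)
The plan is to deduce the corollary by decomposing the conjugacy growth series along the $t$-exponent sum of a minimal representative. Specifically, set
\[
C(z) = C_0(z) + C_{\neq 0}(z),
\]
where $C_0(z)$ counts conjugacy classes of $BS(1,k)$ that meet $\Z_k$ (that is, classes of the form $[(x,0)]$), and $C_{\neq 0}(z)$ counts classes of the form $[(x,m)]$ with $m \neq 0$. This decomposition is well-defined because, as noted in Section \ref{sec:general}, the semidirect-product coordinate $m$ is conjugacy-invariant, so every class lies in exactly one of the two collections, and the length of each class is simply the minimum length of its representatives in the relevant coordinate stratum.

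Next I would cite the results already established. For $C_0(z)$, Propositions \ref{prop:abelianodd}, \ref{prop:abelianeven}, and \ref{prop:abelian2} (covering the cases $k$ odd, $k$ even with $r \geq 2$, and $k = 2$ respectively) show that the relative conjugacy growth series of $\Z_k$ is a rational function of $z$. For $C_{\neq 0}(z)$, Proposition \ref{prop:nonabelian} shows the series is transcendental over $\Q(z)$.

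Finally, I would combine these by the standard closure argument. The set of algebraic power series over $\Q(z)$ forms a field containing $\Q(z)$, so it is closed under addition and subtraction. If $C(z)$ were algebraic, then $C_{\neq 0}(z) = C(z) - C_0(z)$ would be the difference of two algebraic series (rational being a special case of algebraic), hence algebraic, contradicting Proposition \ref{prop:nonabelian}. Therefore $C(z)$ is transcendental, which is the conclusion of Corollary \ref{cor:mainresult}.

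There is essentially no obstacle beyond bookkeeping: all the substantive work has been done in the preceding propositions, and the corollary is a one-line algebraic consequence of algebraicity being preserved under field operations. The only point to state cleanly is the fact that the two sub-series $C_0(z)$ and $C_{\neq 0}(z)$ really do add to $C(z)$ without overlap, which is immediate from the conjugacy-invariance of the $\Z$-coordinate in $\Z_k \rtimes \Z$.
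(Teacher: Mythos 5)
Your proposal is correct and follows exactly the paper's own argument: split the series as the sum of the rational part coming from Propositions \ref{prop:abelianodd}, \ref{prop:abelianeven}, \ref{prop:abelian2} and the transcendental part from Proposition \ref{prop:nonabelian}, then use closure of algebraic series under addition. The only difference is that you spell out the well-definedness of the decomposition and the field-closure step, which the paper leaves implicit.
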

\begin{proof}

By Propositions \ref{prop:abelianodd}, \ref{prop:abelianeven}, \ref{prop:abelian2}, the conjugacy growth series for $\Z_k$ (when $m=0$) is rational, and by Proposition \ref{prop:nonabelian} the generating function for conjugacy classes of the form $[(x,m)]$ with $m\neq 0$ is transcendental. Since the sum of a transcendental function and a rational function is transcendental, we obtain the result.
%If we denote by $p_S$ the denominator of $S(z)$ in (\ref{abelianseries}), we have $p_S(z)=z^3-2z^{r+3}+z^2+z-1$. We have $p_S(\frac{1}{2})=-\frac{1}{8}-\frac{1}{2^{r+2}}<0$ and $p_S(\frac{3}{4})=\frac{47}{64}-\frac{27}{32}(\frac{3}{4})^r>0$. So there is a root $\rho_S \in (\frac{1}{2}, \frac{3}{4})$ of $p_S$. Furthermore, $p_S(0)=-1$ and $p'_S(z)>0$ for $z\in[0,\frac{1}{2}]$, so $\rho_S$ is the smallest real root. Now since $\rho_S > \rho_{\mathcal{A}}$, $\frac{1}{\rho_S}<\frac{1}{\rho_{\mathcal{A}}}$, and therefore the growth rate for the conjugacy classes $[(x,0)]$ is strictly smaller than that for the conjugacy classes $[(x,m)]$ with $m\neq 0$. 
%
%Thus the conjugacy growth of $BS(1, 2r+1)$ with respect to the standard generating set is $O(\rho_S^{-n}+\frac{\rho_{\mathcal{A}}^{-n}}{n})= O(\frac{\rho_{\mathcal{A}}^{-n}}{n})$. %Now changing the generators will result in a conjugacy growth function with the same asymptotic behaviour, i.e. $O(\frac{a^n}{n})$ for some $a>1$, 
%and again by \cite[Theorem D]{Flajolet87} the associated series is transcendental.
\end{proof}

\begin{corollary}\label{cor:equalrates}
The conjugacy and standard growth rates of $BS(1, k)$, with respect to the generating set $\{a, t\}$, are equal.
\end{corollary}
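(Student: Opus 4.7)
First, note the trivial inequality $c_{G,S}(n) \leq s_{G,S}(n)$: each conjugacy class of length $n$ contains at least one shortest representative, an element of length exactly $n$, so choosing one per class gives an injection into the sphere. This shows the standard growth rate is at least the conjugacy growth rate, which by Proposition \ref{prop:nonabelian} equals $\rho^{-1}$ (where $\rho = \rho_o$ or $\rho_e$ depending on the parity of $k$). It remains to show the standard growth rate is at most $\rho^{-1}$.

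To bound $s_{G,S}(n)$ from above, I would enumerate the geodesic forms of $BS(1,k)$. Every geodesic falls into one of finitely many shapes: for elements of the base group $\Z_k$ (i.e.\ $t$-exponent zero), the shapes appear in Propositions \ref{prop:basegrpelemreps}, \ref{prop:evenelemreps}, \ref{prop:2elemreps}; for elements of nonzero $t$-exponent, the shapes are the four forms MWe1a--MWe4a from \cite{CEG}. Since every group element has at least one geodesic representative, the number of length-$n$ geodesics in these shapes dominates $s_{G,S}(n)$.

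Next, for each shape I would construct its rational generating function using a syllabic decomposition. ``Free'' syllables $a^{x_i}t$ with $|x_i|\leq r$ contribute the factor $\mathcal{S}_o(z)$ (or $\mathcal{S}_e(z)$) already appearing in the proof of Proposition \ref{prop:nonabelian}, while ``paired'' blocks $a^{x_i}t\cdots t^{-1}$ (contributing zero to $m$) contribute $z\mathcal{S}(z)$. Shapes encoding $\Z_k$-elements are built purely from paired blocks, so their generating function denominators contain the factor $1 - z\mathcal{S}(z)$; since $\rho\mathcal{S}(\rho) = \rho < 1$, the smallest positive root of $1 - z\mathcal{S}(z)$ lies strictly above $\rho$, and so $\Z_k$-elements have growth rate strictly less than $\rho^{-1}$. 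Each of the shapes MWe1a--MWe4a carries at least one unmatched free syllable (the ``net'' $t$'s producing a nonzero $m$), contributing a factor $\frac{1}{1-\mathcal{S}(z)}$ with dominant pole at $\rho$; any extra factor $\frac{1}{1 - z\mathcal{S}(z)^{j}}$ arising from a $t^{-n}$ prefix or suffix has a pole strictly greater than $\rho$ and therefore does not affect the dominant asymptotics.

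Summing the generating functions of the finitely many shapes yields $s_{G,S}(n) = O(n^{c}\rho^{-n})$ for some constant $c$, so the standard growth rate is at most $\rho^{-1}$, and equality with the conjugacy growth rate follows. The main obstacle is the case-by-case bookkeeping: one must verify that every shape's dominant singularity is indeed at the expected location, and that the various local side conditions (such as conditions $A$ and $B$ appearing in Propositions \ref{prop:basegrpelemreps}--\ref{prop:2elemreps}, which forbid certain subwords) only refine the corresponding generating functions without moving their leading singularity.
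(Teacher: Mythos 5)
Your strategy is sound, but it takes a genuinely different route from the paper on the harder inequality. The paper obtains the standard growth rate essentially for free by quoting the explicit rational growth series of $BS(1,k)$ from \cite{CEG} (Theorems (i)--(iii); see also \cite{BT}): the denominator of that series contains, as the factor with the smallest root, the same polynomial $1-2z-z^2+2z^{r+2}$ (resp.\ its even analogue) that appears in the denominator of (\ref{nonZseriesOdd}) (resp.\ (\ref{nonZseriesEven})), so the standard growth rate and the growth rate of the conjugacy classes outside $\Z_k$ are both $1/\rho$, while the classes inside $\Z_k$ grow strictly more slowly by Corollaries \ref{cor:oddabelian}, \ref{cor:evenabelian} and \ref{cor:2abelian}. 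You instead prove the upper bound $s_{G,S}(n)=O(n^{c}\rho^{-n})$ from scratch by a singularity analysis of the geodesic shapes, combining it with the trivial injection $c_{G,S}(n)\leq s_{G,S}(n)$ and Proposition \ref{prop:nonabelian} for the lower bound. Your two key observations are correct: since $\mathcal{S}(\rho)=1$ we get $\rho\,\mathcal{S}(\rho)=\rho<1$, so every ``balanced'' block contributes a pole strictly larger than $\rho$, while the unmatched syllables in MWe1a--MWe4a place the dominant pole exactly at $\rho$ (and the boundedness of the exponents $x_i$ in the normal forms is what makes each shape's generating function finite at all). What your route buys is independence from the growth-series formulas of \cite{CEG}, needing only their normal forms; what it costs is exactly the case-by-case bookkeeping you defer --- in particular the even case, where $\mathcal{S}_e$ contains blocks of two syllable lengths and the prefix/suffix powers of $t$ in MWe2a--MWe4a interact with the syllable count --- which amounts to re-deriving an upper-bound version of the result the paper simply cites. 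Either way the conclusion follows.
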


\begin{proof}
We start with the odd case. By \cite[Theorem (iii)]{CEG} (see also \cite[Lemma 11(b)]{BT}) the standard growth rate is the inverse of the smallest absolute value of the real roots of the polynomial $1-2t-t^2+2t^{r+2}$ which appears in the denominator of the standard growth series. But the same polynomial appears in the denominator of (\ref{nonZseriesOdd}), and since the smallest absolute value of real roots is $\rho_o\le\frac12$, this will dominate the growth rate of the conjugacy classes in $\Z_k$, which is smaller than $2$ by Corollary \ref{cor:oddabelian}. Thus the standard and the conjugacy growth rates are equal. 

In the even case with $k > 2$, note that the second factor in the denominator in \cite[Theorem (i)]{CEG} is identical to that in formula (\ref{nonZseriesEven}), and both denominators have the same smallest absolute value of real roots $\rho_e<\frac12$ which dominates the growth rate of the conjugacy classes in $\Z_k$, which is smaller than $2$ by Corollaries \ref{cor:oddabelian}, so the two rates are equal.

In the case when $k = 2$, note that the second factor in the denominator in \cite[Theorem (ii)]{CEG} is also a factor to that in formula (\ref{nonZseriesEven}), and both denominators have the same smallest absolute value of real roots $\rho_e \approx 0.590$ which dominates the growth rate of the conjugacy classes in $\Z_k$, which is approximately $\frac1{0.742} \approx 1.348$ by Corollaries \ref{cor:2abelian}, so the two rates are equal.
\end{proof}

\section{Conjugacy growth series formulas} \label{sec:formulas}

In this section we give formulas for the growth series of the conjugacy classes of $BS(1,k)$ outside the normal abelian subgroup $\Z_k$. That is, we compute the generating function for the set $\mathcal{A}$, up to cyclic permutation, given in Corollary \ref{cor:conjgeos}.

In the description of $\mathcal{A}$ in Corollary \ref{cor:conjgeos} there is a length-preserving bijection between $\mathcal{A}_+$ and $\mathcal{A}_-$, so it suffices to consider the generating function for the set $\mathcal{A}_+$ up to cyclic permutations. 

In the odd $k=2r+1$ case, as the set $\mathcal{N}_o=\{(a^{-r}t)^m\mid m\geq1\}$ has generating function $\mathcal{N}_o(z)=\sum_{m\geq 1}z^{(r+1)m}$, it is sufficient to compute the generating function of $\mathcal{A}_o:=\{a^{x_0}ta^{x_1}t\cdots a^{x_{m-1}}t \mid m\geq 1, |x_i|\leq r\}$ up to cyclic permutation. 

In the $k=2r$ case as the set $\mathcal{N}_e=\{(a^{-r+1}ta^{-r}t)^\frac m2, (a^{-r}ta^{-r+1}t)^\frac m2\mid m\geq2, m\equiv 0\pmod 2\}$ has generating function $\mathcal{N}_o(z)=\sum_{m\geq 1}z^{(2r+1)m}$, it is sufficient to compute the generating function of $\mathcal{A}_e=\{a^{x_0}ta^{x_1}t\cdots a^{x_{m-1}}t\mid m\geq1, |x_i|\leq r, \forall i(x_{i-1} = \pm r \implies 0 \le \pm x_i< r)\}$ up to cyclic permutation.

This is exactly the \textit{cycle construction} (see page 26 in \cite{FlajoletSedgewick}) applied to the sets $$\mathcal{S}_o=\{t, at, a^{-1}t, a^2t, a^{-2}t, \dots, a^rt, a^{-r}t\}$$ and $$\mathcal{S}_e=\{t, at, a^{-1}t, \dots, a^{r-1}t, a^{-r+1}t, a^{\pm r}tt, a^{\pm r}ta^{\pm 1}t, \dots , a^{\pm r}ta^{\pm(r-1)}t\},$$ respectively, defined in the proof of Proposition \ref{prop:nonabelian}. Thus by applying the formula in \cite[Theorem I.1]{FlajoletSedgewick}, we get that 
\begin{equation}
Cyc(\mathcal{A}_o)(z)= \sum_{k=1}^\infty \frac{-\phi(k)}{k}\log(1-\mathcal{S}_o(z^k)),
\end{equation}
where $\mathcal{S}_o(z)$ is given in (\ref{Sformula}), and in the odd case we get
 \begin{equation}
Cyc(\mathcal{A}_e)(z)= \sum_{k=1}^\infty \frac{-\phi(k)}{k}\log(1-\mathcal{S}_e(z^k)),
\end{equation}
where $\mathcal{S}_e(z)$ is given in (\ref{Seformula}).

The conjugacy growth series for $BS(1, 2r+1)$ is then the series obtained by adding (\ref{abelianseriesodd}) to $Cyc(\mathcal{A}_o)$ and then subtracting $\mathcal{N}_o(z)$, and  the conjugacy growth series for $BS(1, 2r)$ is then the series obtained by adding (\ref{abelianserieseven}) to $Cyc(\mathcal{A}_e)$ and then subtracting $\mathcal{N}_e(z)$.

\section{Conjectures and open questions}

While this paper establishes qualitative and quantitative results for conjugacy growth in $BS(1,k)$ with respect to the standard generating set, we conjecture that the same characterisations of conjugacy growth should hold for all generating sets. More generally, we expect the following to be true. Clearly the second conjecture implies the first. 

\begin{conjecture}
The conjugacy growth series of the groups $BS(1,k)$ with respect to \emph{any} generating set are transcendental.
\end{conjecture}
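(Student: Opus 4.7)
The plan is to extend the asymptotic pattern $c_{G,S}(n) \sim C \cdot \alpha_S^n / n$ established in Proposition \ref{prop:nonabelian} and Corollary \ref{cor:mainresult} to an arbitrary finite generating set $S$, and then invoke Flajolet's transfer theorem \cite[Theorem D]{Flajolet87} exactly as before to conclude transcendence.

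The first step exploits the fact that the abelian normal subgroup $\Z_k$ and the quotient map $\tau \colon BS(1,k) \twoheadrightarrow \Z$ are intrinsic: the splitting of conjugacy classes into those inside $\Z_k$ and those outside $\Z_k$ is generating-set independent. For the ``abelian'' part I would try to show that an unambiguous context-free encoding of $\Z_k$-conjugacy representatives can still be built for any $S$, using that conjugation on $\Z_k$ is just multiplication by $k$ and that the $S$-length function restricted to $\Z_k$ is controlled by an appropriate base-$k$ expansion. This should yield a series that is at worst algebraic and has growth rate strictly below the total conjugacy growth rate, hence unable to spoil the transcendence verdict.

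For the ``non-abelian'' part, the key observation is intrinsic: a conjugacy class with $\tau$-image $m \neq 0$ is naturally represented by a necklace of $|m|$ syllables in a suitable alphabet, and its conjugacy length is a fixed function of that necklace. I would try to prove that for any finite $S$ the $S$-length of such an element is coarsely equivalent to a positive linear combination of syllable weights, so that the cycle construction of \cite{FlajoletSedgewick} still applies with a controlled error. Burnside/P\'olya cycle counting then forces the asymptotic $c(n) \asymp \alpha_S^n / n$ for the non-abelian part, whence the transfer theorem gives transcendence.

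The main obstacle is this coarse-equivalence step. For an arbitrary $S$, the set of $S$-geodesics of $BS(1,k)$ need not be regular, and one has no analogue of Proposition \ref{conj-geo} or of the explicit syllable alphabets $\mathcal{S}_o$, $\mathcal{S}_e$ used in the proof of Proposition \ref{prop:nonabelian}. I expect the resolution to require a generating-set-independent ``combinatorial length'' on conjugacy classes, bi-Lipschitz equivalent to $|\cdot|_S$, which I would attempt to build from the coarse geometry of $BS(1,k)$: the Bass--Serre / HNN syllable decomposition of an element survives any quasi-isometry of the Cayley graph, so one should be able to produce the necklace model up to a multiplicative constant. If in addition one can pin down that constant exactly, the stronger conjecture in the introduction that conjugacy and standard growth rates coincide for every generating set would follow simultaneously.
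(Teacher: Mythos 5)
This statement is not a theorem of the paper: it appears in the final section as an open conjecture, and the paper offers no proof of it (the results actually established, Proposition \ref{prop:nonabelian} and Corollary \ref{cor:mainresult}, concern only the standard generating set $\{a,t\}$). Your proposal is therefore not being measured against an existing argument, and on its own terms it is a research programme rather than a proof; you acknowledge as much, but the gaps are worth naming precisely.

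The decisive gap is the passage from ``coarse equivalence'' to the asymptotic $c_{G,S}(n)\asymp\alpha_S^n/n$. Flajolet's transfer theorem needs two-sided bounds $C_1\alpha^n/n\le c(n)\le C_2\alpha^n/n$ with the \emph{same} $\alpha$ and the \emph{same} polynomial correction $1/n$; these are exactly the estimates (\ref{inequality:odd}) and (\ref{inequality:even}), and they are extracted from an exact syllable decomposition in which $S$-length is literally additive over the syllables of the cyclic word, so that the cycle construction applies verbatim. A bi-Lipschitz equivalence between $|\cdot|_S$ and a syllable weight does not survive this: it can change the exponential growth rate itself (not merely the constant), and even a bounded \emph{additive} error smears conjugacy classes across nearby length values in a way that destroys the $1/n$ factor coming from counting necklaces of exactly $n$ beads. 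Quasi-isometry invariance of the Bass--Serre decomposition is of no help here, since strict growth functions and growth series are not quasi-isometry invariants in any form fine enough for Theorem D of \cite{Flajolet87}. A secondary, independent gap is the abelian part: you assert without argument that the $\Z_k$-conjugacy representatives admit an unambiguous context-free encoding for arbitrary $S$ and that their growth rate stays strictly below the non-abelian rate; both claims are used essentially (the second to prevent the algebraic summand from masking the transcendental one at the level of growth rates) and neither follows from anything in the paper, whose Propositions \ref{prop:basegrpelemreps}--\ref{prop:2elemreps} are specific to $\{a,t\}$. Until one has, for each generating set $S$, an explicit enough description of conjugacy geodesics to replace Proposition \ref{conj-geo} and the alphabets $\mathcal{S}_o$, $\mathcal{S}_e$, the conjecture remains open, which is precisely why the paper states it as such.
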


\begin{conjecture}[see also \cite{Evetts}]
The conjugacy growth series of any finitely presented group that is not virtually abelian is transcendental.
\end{conjecture}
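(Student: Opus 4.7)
The plan is to attack this conjecture by splitting along the Gromov--Wolf dichotomy into the polynomial growth case (equivalently, virtually nilpotent by Gromov's theorem) and the exponential growth case. In the exponential case, one would follow the blueprint used in the present paper and in the references cited in the introduction (hyperbolic, relatively hyperbolic, graph products, lamplighter): construct a combinatorial set of cyclically reduced conjugacy geodesics, count it as an appropriate regular or context-free language of necklace-like words, observe that each conjugacy class of length $n$ typically corresponds to $\Theta(n)$ such representatives, and deduce asymptotics of the form $c(n) \sim C \alpha^n / n$. Transcendence of the generating series then follows from the Flajolet result used in the proof of Proposition \ref{prop:nonabelian}, which rules out this asymptotic shape for any algebraic generating function.

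In the polynomial case the group is virtually nilpotent but not virtually abelian, so the lower central series has length at least two. The plan would be to use Mal'cev coordinates together with the Baker--Campbell--Hausdorff formula to parametrise conjugacy classes by polynomial equivalence classes under the commutator action, and then to show that the resulting counting function $c(n)$ does not eventually agree with a polynomial in $n$. Since virtually nilpotent groups have rational standard growth series, establishing that $c(n)$ carries a more delicate polynomial-plus-oscillation behaviour (of the type produced by quasi-polynomial counting with a nontrivial period, or by logarithmic corrections of the kind that appear in Heisenberg-like examples) would already yield irrationality; upgrading to transcendence is a separate step, plausibly via closure properties of the algebraic class together with Fatou-type number-theoretic obstructions.

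The main obstacle is the sheer variety of finitely presented groups: unlike the $BS(1,k)$ case, there is no universal normal form or language of geodesics to feed into the DSV or Flajolet machinery. In the exponential case the strategy rests on a bijection between conjugacy classes and necklace-like objects, but identifying the right combinatorial object relies on geometric features (hyperbolicity, acylindricity, graph product structure) that may fail in general, so a preliminary reduction to a tractable subclass will almost certainly be needed. The polynomial case is perhaps even more delicate, since one must rule out accidental rationality arising from low-complexity group structure, requiring fine control over the commutator contribution to conjugacy length. A reasonable intermediate target would be the class of polycyclic groups, where Mal'cev-type machinery is available but the asymptotics of conjugacy growth are still largely unexplored; settling that class would give a template to extend to soluble groups of exponential growth and ultimately to the fully general statement.
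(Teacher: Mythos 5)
The statement you are trying to prove is stated in the paper as a \emph{conjecture}: the authors offer no proof of it, and indeed the whole point of the paper is only to verify it for the single family $BS(1,k)$ with the standard generating set. So the first thing to say is that what you have written is a research programme, not a proof, and it should not be presented as one. Phrases such as ``one would follow the blueprint'', ``a preliminary reduction to a tractable subclass will almost certainly be needed'', and ``plausibly via closure properties \dots together with Fatou-type number-theoretic obstructions'' are acknowledgements that the key steps are missing, not arguments.

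Beyond that, the programme itself has concrete gaps. First, your case split is not exhaustive: Gromov's theorem characterises polynomial growth, and Milnor--Wolf gives a polynomial/exponential dichotomy only for soluble (or linear) groups; for general finitely presented groups the existence of intermediate growth is a famous open problem, so ``polynomial or exponential'' is not a legitimate dichotomy to induct on. Second, in the exponential case the asymptotic shape $c(n)\sim C\alpha^n/n$ is exactly what must be \emph{proved}, and it fails to be automatic: the work of Hull and Osin cited in this paper shows that conjugacy growth of finitely generated groups of exponential growth can be drastically smaller than word growth (even finitely many conjugacy classes), so there is no general bijection between conjugacy classes and $\Theta(n)$-to-one necklace classes of geodesics; the necklace picture in this paper depends on the explicit normal forms of Propositions \ref{prop:basegrpelemreps}--\ref{prop:2elemreps} and on Lemma \ref{geo-lemma}, none of which have analogues for an arbitrary finitely presented group. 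Third, in the virtually nilpotent case you have not identified any mechanism forcing irrationality, let alone transcendence; the conjugacy growth series of the Heisenberg group is not computed here or in the cited literature, and ``polynomial-plus-oscillation behaviour'' is compatible with rationality (quasi-polynomials have rational generating functions). In short, the conjecture remains open, and your proposal does not close any of the three gaps that make it hard.
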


Regarding growth rates, we ask the following question.

\begin{question}\label{conj:equal1}
If the conjugacy and standard growth rate of a group are equal for some generating set, are they equal for all generating sets?
\end{question}

The question is related to the conjecture below, which, as we pointed out in the introduction, holds in many important classes of groups.

\begin{conjecture}\label{conj:equalmany}
For any choice of generating set, the conjugacy and standard growth rate of a finitely presented group are equal.
\end{conjecture}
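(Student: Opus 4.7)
The trivial inequality $c(n) \le s(n)$ (the map from conjugacy classes of length $n$ to the sphere of radius $n$, sending a class to any chosen minimiser, is injective) gives that the conjugacy growth rate is always at most the standard growth rate, so the conjecture reduces to the reverse inequality. My plan is to control, uniformly in $g$, how many conjugates of $g$ can lie in the ball $B(n)$ of radius $n$: if one can show $|[g] \cap B(n)| \le e^{o(n)}$ uniformly (or at least on a generic set), then partitioning $B(n)$ into conjugacy classes yields $|B(n)| \le e^{o(n)} \cdot C(n)$, and taking $n$-th roots gives the desired equality of exponential growth rates.

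A concrete way to package such a clustering bound is through centralisers: choosing a length-minimal representative $g_0 \in [g]$, the set $[g] \cap B(n)$ is in bijection with cosets of $\mathrm{Cent}_G(g_0)$ in $\{h \in G : hg_0 h^{-1} \in B(n)\}$, so the bound translates into a uniform control on the growth of these coset spaces. In the $BS(1,k)$ case treated in this paper the problem is solved more directly: the set $\mathcal{A}$ of conjugacy geodesics is an explicit language (constructed in Corollary \ref{cor:conjgeos}) whose exponential growth matches that of the standard geodesic language on the nose, the extra $\frac{1}{n}$ factor coming from the cycle construction being only polynomial. I would try to replicate this phenomenon in broader settings: first in polycyclic or metabelian groups, where Mal'cev normal forms provide natural conjugacy-representative candidates, and then in (bi)automatic groups, where one might hope for a conjugacy-language analogue of the Chomsky--Sch\"utzenberger result of Theorem \ref{CS}, so that the conjugacy growth series is at least algebraic and its radius of convergence can be compared to the standard one.

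The principal obstacle is that finitely presented groups can be genuinely wild: centralisers may be severely distorted, the conjugacy problem can be undecidable, and short conjugates can be hard to extract from long representatives. A realistic incremental programme is to verify the conjecture first for all finitely generated soluble groups (extending the method of this paper beyond the specific $BS(1,k)$ normal forms to general metabelian and polycyclic groups), then for groups acting properly cocompactly on CAT(0) cube complexes, where translation length furnishes a canonical conjugacy-length function, and finally to investigate stability under standard constructions such as extensions, HNN extensions, and amalgamated free products over small subgroups. A fully uniform proof for all finitely presented groups appears to require a substantial new conceptual input, most plausibly a quasi-isometry-invariant notion of conjugacy-geodesic representative; producing such a notion, rather than the book-keeping it would enable, is the real difficulty.
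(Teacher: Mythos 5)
This statement is labelled a \emph{conjecture} in the paper: there is no proof of it anywhere in the text, and the authors only verify the very special case of $BS(1,k)$ with the standard generating set (Corollary \ref{cor:equalrates}), by matching the dominant real root of the denominator of $\mathcal{S}_o^*$ (resp.\ $\mathcal{S}_e^*$) against the denominator of the standard growth series from \cite{CEG} and checking that the abelian part grows strictly more slowly. So there is no ``paper's own proof'' to compare yours against, and your submission, by your own admission in its final sentence, is a research programme rather than a proof. As such it cannot be accepted as a proof of the statement.

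Beyond that formal point, the one concrete mechanism you do propose is already broken in the simplest examples. You reduce the conjecture to the uniform clustering bound $|[g]\cap B(n)|\le e^{o(n)}$; the reduction itself is logically sound (partition the ball by conjugacy classes and take $n$-th roots), but the hypothesis fails in a nonabelian free group: the conjugates of a single generator $a$ are the words $waw^{-1}$, of which there are exponentially many of length at most $n$, even though the conjugacy and standard growth rates of a free group do coincide. So your sufficient condition is far from necessary and cannot be the route to a general proof; in free groups, and in the $BS(1,k)$ case of this paper, equality of rates comes instead from the fact that passing from words to \emph{cyclic} words (the cycle construction) only divides the count by a polynomial factor $n$, which does not alter the exponential growth rate. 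The centraliser reformulation you sketch is the standard one, but you give no argument controlling the distortion of centralisers in any class of groups, and the remainder of the proposal (polycyclic, metabelian, CAT(0) cubical, closure under HNN extensions) is a list of intentions with no supporting lemmas. The conjecture remains open after your write-up exactly as it was before it.
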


\section*{Acknowledgements}

The authors acknowledge the hospitality of the Hausdorff Institute in Bonn and thank the organisers of the Trimester \emph{Logic and Algorithms in Group Theory}, where the discussions on this paper started.
 The first named author was partially supported by EPSRC Standard grant EP/R035814/1.

%\bibliographystyle{amsalpha}
%\bibliography{ref}

\begin{thebibliography}{1}

\bibitem{AC2017}
Yago Antol\'in and Laura Ciobanu.
\newblock Formal conjugacy growth in acylindrically hyperbolic groups.
\newblock {\em Int. Math. Res. Not. IMRN}, (1):121--157, 2017.

\bibitem{bdc}  E.\ Breuillard, and Y.\ de Cornulier,
{\em On conjugacy growth for solvable groups},
Illinois J.~Math.~{\bf 54} (2010), 389--395. 

\bibitem{BT} M.\ Bucher, and A.\ Talambutsa,
{\em Minimal exponential growth rates of metabelian Baumslag - Solitar groups and lamplighter groups},
Groups, Geometry, Dynamics, Vol. 11, Issue 1 (2017),189 -- 209.

\bibitem{CS} N.\ Chomsky, and M.\ P.\ Sch\"utzenberger,
\newblock {\em The algebraic theory of context-free languages},
\newblock Computer programming and formal systems, 118-161, North-Holland, Amsterdam, 1963.

\bibitem{CH2014}
L.\ Ciobanu and S.\ Hermiller, 
\newblock {\em Conjugacy growth series and languages in groups},
\newblock Trans.~Amer.~Math.~Soc.,  366 (2014), 2803--2825.

\bibitem{CHHR}
L.\ Ciobanu, S.\ Hermiller, D.\ F.\ Holt  and S.\ Rees,
\newblock {\em Conjugacy Languages in Groups},
\newblock Israel Journal of Mathematics, to appear, arXiv:1401.7203.

\bibitem{CHM18}
L.\ Ciobanu, S.\ Hermiller, and V. \ Mercier,
\emph{Conjugacy growth in graph products,}
\newblock Preprint 2018.

\bibitem{CEG} D.J.\ Collins, M.\ Edjvet and C.P.\ Gill,
{\em Growth series for the group $\langle x,y \mid x^{-1}yx=y^l \rangle$},
Arch. Math., Vol. 62, 1 -- 11 (1994).

\bibitem{ckGAFA} 
M.\ Coornaert and G.\ Knieper,
\newblock {\em Growth of conjugacy classes in Gromov hyperbolic groups},
\newblock Geom. Funct. Anal. {\bf 12} (2002), no. 3, 464--478. 

\bibitem{ckIJAC} 
M.\ Coornaert and G.\ Knieper,
\newblock {\em An upper bound for the growth of conjugacy classes 
in torsion-free word hyperbolic groups},
\newblock Internat.~J.~Algebra Comput.~{\bf 14} (2004), 395--401. 

\bibitem{Evetts}
A.\ Evetts,
\newblock{Rational Growth In Virtually Abelian Group}
\newblock{\em ArXiv e-prints}, October 2018, arXiv:1808.06371.

\bibitem{Flajolet87} 
P.\ Flajolet,
\newblock {\em Analytic models and ambiguity of context-free languages},
\newblock Theoretical Computer Science, 49 (1987), 283--309.

\bibitem{FlajoletSedgewick} 
P.\ Flajolet and R.\ Sedgewick,
\newblock {\em Analytic Combinatorics},
\newblock Cambridge University Press, 2009.


\bibitem{GY2019} I.\ Gekhtman and W. \ Yang,
\newblock {Counting conjugacy classes in groups with contracting elements}
\newblock {\em ArXiv e-prints}, Feb 2019, arXiv:1810.02969.


\bibitem{gubasapir} V.\ Guba, and M.\ Sapir,
{\em On the conjugacy growth functions of groups},
Illinois J.~Math.~{\bf 54}  (2010),  301--313. 

\bibitem{HU} J.\ E.\ Hopcroft, and J.\ D.\ Ullman,
\newblock {\em Introduction to Automata Theory,	Languages, and Computation},
\newblock Addison-Wesley Publishing Co., Reading, Mass., 1979. Addison-Wesley Series in Computer Science.

\bibitem{hullosin}
M. \ Hull ~and D. \ Osin,
{\em Conjugacy growth of finitely generated groups},
\newblock Adv. Math. 235 (2013), 361--389.

\bibitem{Mercier}
V.~{Mercier}.
\newblock {Conjugacy growth series of some wreath products}.
\newblock {\em ArXiv e-prints}, October 2016, arXiv:1610.07868.

\bibitem{rivin}
I.\  Rivin,
\newblock {\em Growth in free groups (and other stories) - twelve years later},
\newblock  {Illinois J.~Math.} {\bf 54} (2010), 327--370.


\end{thebibliography}

\end{document}